\newsavebox\myboxA
\newsavebox\myboxB
\newlength\mylenA
\newcommand*\xoverline[2][0.75]{%
    \sbox{\myboxA}{$\m@th#2$}%
    \setbox\myboxB\null
    \ht\myboxB=\ht\myboxA%
    \dp\myboxB=\dp\myboxA%
    \wd\myboxB=#1\wd\myboxA
    \sbox\myboxB{$\m@th\overline{\copy\myboxB}$}
    \setlength\mylenA{\the\wd\myboxA}
    \addtolength\mylenA{-\the\wd\myboxB}%
    \ifdim\wd\myboxB<\wd\myboxA%
       \rlap{\hskip 0.5\mylenA\usebox\myboxB}{\usebox\myboxA}%
    \else
        \hskip -0.5\mylenA\rlap{\usebox\myboxA}{\hskip 0.5\mylenA\usebox\myboxB}%
    \fi}
\theoremstyle{plain}
\newtheorem{thm}{Theorem}[section]
\theoremstyle{plain}
\theoremstyle{plain}
\theoremstyle{plain}
\newtheorem{proposition}{Proposition}[section]
\theoremstyle{definition}
\newtheorem{defi}[thm]{Definition}
\theoremstyle{plain}
\newtheorem{cor}[thm]{Corollary}
\newtheorem{rmq}{Remark}
\theoremstyle{definition}
\newcommand{\E}{{\mathbb E}}
\newcommand{\R}{{\mathbb R}}
\renewcommand{\P}{{\mathbb P}}
\newcommand{\PP}{{\mathcal P}}
\newcommand{\XX}{{\mathcal X}}
\newcommand{\LL}{{\mathcal L}}
\newcommand{\NN}{{\mathcal N}}
\renewcommand{\SS}{{\mathcal S}}
\newcommand{\1}{{\mathds 1}}
\newcommand{\WB}{\xoverline{W}}
\def\diag{\mathop{\rm diag}\nolimits}%
\newcommand{\umin}[1]{\underset{#1}{\min}\;}
\newcommand{\umax}[1]{\underset{#1}{\max}\;}
\newcommand{\uinf}[1]{\underset{#1}{\inf}\;}
\DeclareMathOperator{\diam}{diam}
\newcommand{\ualpha}{u}
\newcommand{\vbeta}{v}
\title{Central limit theorems for entropy-regularized optimal transport on finite spaces and statistical applications
\footnote{This work has been carried out with financial support from the French
State, managed by the French National Research Agency (ANR) in the frame
of the GOTMI project (ANR-16-CE33-0010-01). }}
\author{J\'er\'emie Bigot\footnote{J. Bigot is a member of Institut Universitaire de France.}, Elsa Cazelles \& Nicolas Papadakis   \\
\\  Institut de Math\'ematiques de Bordeaux et CNRS  (UMR 5251)   \\ Universit\'e de Bordeaux }
\begin{document}

\maketitle

\begin{abstract}
The notion of entropy-regularized optimal transport, also known as Sinkhorn divergence, has recently gained popularity in machine learning and statistics, as it makes feasible the use of smoothed optimal transportation distances for data analysis. The Sinkhorn divergence allows the fast computation of an entropically regularized Wasserstein distance between two probability distributions supported on a finite metric space of (possibly) high-dimension. For data sampled from one or two unknown probability distributions, we derive the distributional limits of the empirical Sinkhorn divergence and its centered version (Sinkhorn loss). We also propose a bootstrap procedure which allows to obtain new test statistics for measuring the discrepancies between multivariate probability distributions. Our work is inspired by the results of Sommerfeld and Munk in \cite{sommerfeld2016inference} on the asymptotic distribution of empirical Wasserstein distance on finite space using unregularized transportation costs. Incidentally we also analyze the asymptotic distribution of entropy-regularized Wasserstein distances when the regularization parameter tends to zero. Simulated and real datasets are used to illustrate our approach.
\end{abstract}

\section{Introduction}\label{sec:intro}

\subsection{Motivations}

In this paper, we study the convergence (to their population counterparts) of empirical probability measures supported on a finite metric space with respect to entropy-regularized transportation costs. Transport distances are widely employed for comparing probability measures since they capture in a instinctive manner the geometry of distributions (see e.g \cite{villani2003topics} for a general presentation on the subject). In particular, the Wasserstein distance is well adapted to deal with discrete probability measures (supported on a finite set), as its computation reduces to solve a linear program. Moreover, since data in the form of histograms may be represented as discrete measures,  the Wasserstein distance has been shown to be a relevant statistical measure in various fields such as clustering of discrete distributions \cite{ye2015fast}, nonparametric Bayesian modelling \cite{nguyen2011wasserstein}, fingerprints comparison \cite{sommerfeld2016inference}, unsupervised learning \cite{arjovsky2017wasserstein} and principal component analysis \cite{bigot2017geodesic, NIPS2015_5680,GPCA}.

However, the computational cost to evaluate a transport distance is generally of order $\mathcal{O}(N^3\log N)$ for discrete probability distributions with a support of size $N$. To overcome the computational cost to evaluate a transport distance, Cuturi \cite{cuturi} has proposed to add an entropic regularization term  to the linear program corresponding to a standard optimal transport problem, leading to the notion of Sinkhorn divergence between  probability distributions. Initially, the purpose of transport plan regularization was to efficiently compute  a divergence term close to the Wasserstein distance between two probability measures,  through an iterative scaling algorithm where each iteration costs $\mathcal{O}(N^2)$. This proposal has recently gained popularity in machine learning and statistics, as it makes feasible the use of smoothed optimal transportation distance for data analysis.
It has found various applications such as generative models \cite{2017-Genevay-AutoDiff} and more generally for high dimensional data analysis in multi-label learning \cite{frogner2015learning}, dictionary learning \cite{rolet2016fast}, image processing \cite{CuturiPeyre,rabin2015convex}, text mining via bag-of-words comparison \cite{2016-genevay-nips}, averaging of neuroimaging data \cite{gramfort2015fast}.

The goal of this paper is to analyze the potential benefits of the Sinkhorn divergence and its centered version \cite{feydy2018interpolating,2017-Genevay-AutoDiff} for statistical inference from empirical probability measures. We derive novel results on the asymptotic distribution of such divergences for data sampled from (unknown) distributions supported on a finite metric space. The main application is to obtain new test statistics (for one or two samples problems) for the comparison of multivariate probability distributions.

\subsection{Previous work and main contributions}

The derivation of distributional limits of an empirical measure towards its population counterpart in $p$-Wasserstein distance $W_p(\mu,\nu)$ is well understood for probability measures $\mu$ and $\nu$ supported on $\R$ \cite{MR2161214,MR1740113,MR2121458}. These results have then been extended for specific parametric distributions supported on $\R^{d}$ belonging to an elliptic class, see \cite{MR3545279} and references therein. Recently, a central limit theorem has been established in \cite{delBarrio2017} for empirical transportation cost, and  data sampled from absolutely continuous measures on $\R^{d}$,  for any $d \geq 1$. The case of discrete measures supported on a finite metric space has also been considered in \cite{sommerfeld2016inference} with the proof of the convergence (in the spirit of the central limit theorem) of empirical Wasserstein distances toward the optimal value of a linear program. Additionally,  Klatt et al.\ \cite{klatt2018empirical} analyzed, in parallel with our results, the distributional limit of regularized  optimal transport divergences between empirical distributions. In particular, the work in \cite{klatt2018empirical} extends the study of distributional limits of regularized empirical transportation cost to general penalty functions (beyond entropy regularization). The authors of \cite{ramdas2017wasserstein} also studied the link between nonparametric tests and the Wasserstein distance, with an emphasis on distributions with support in $\R$.

However, apart from the one-dimensional case ($d=1$), and the work of \cite{klatt2018empirical}, these results lead to test statistics whose numerical implementation become prohibitive for empirical measures supported on $\R^{d}$ with $d \geq 2$. The computational cost required to evaluate a transport distance is indeed only easily tractable in $\R$.
It is therefore of interest to propose  test statistics based on fast Sinkhorn divergences \cite{cuturi}. In this context, this paper focuses on the study of inference from discrete distributions in terms of entropically regularized transport costs, the link with the inference through unregularized transport, and the construction of tests statistics that are well suited to investigate the equality of two distributions. The results are inspired by the work in \cite{sommerfeld2016inference}  on the asymptotic distribution of empirical Wasserstein distance on finite space using unregularized transportation costs.

Our main contributions may be summarized as follows. First, for data sampled from one or two unknown  measures  $\mu$ and $\nu$ supported on a finite space, we derive central limit theorems for the Sinkhorn divergence between their empirical counterpart. These results allow to build new test statistics for measuring the discrepancies between multivariate probability distributions. Notice however that the Sinkhorn divergence denoted $W_{p,\varepsilon}^p(\mu,\nu)$ (where $\varepsilon > 0$ is a regularization parameter) is not a distance since $W_{p,\varepsilon}^p(\mu,\mu) \neq 0$. This is  a serious drawback for testing the hypothesis of equality between distributions. Thus, as introduced in \cite{feydy2018interpolating,2017-Genevay-AutoDiff},  we further consider the centered version of the Sinkhorn divergence  $\WB_{p,\varepsilon}^p(\mu,\nu)$, referred to as Sinkhorn loss, which satisfies
$\WB_{p,\varepsilon}^p(\mu,\mu) = 0$. This study thus  constitutes an important novel contribution with respect to  the work of \cite{klatt2018empirical}. We present new results on the asymptotic distributions of the  Sinkhorn loss between empirical measures. Interestingly, under the hypothesis that $\mu = \nu$, such statistics do not converge to a Gaussian random variable but to a mixture of chi-squared distributed random variables. To illustrate the applicability of the method to the analysis of real data, we  propose a bootstrap procedure to estimate unknown quantities of interest on the distribution of these  statistics such as their non-asymptotic variance and quantiles. Simulated and real datasets are used to illustrate our approach. Finally, one may stress that an advantage of the use of test statistics based regularized Wasserstein distance (rather than other losses or divergences) is to allow further statistical inference from the resulting optimal transport plan as demonstrated in  \cite{klatt2018empirical} for the analysis of protein interaction networks.

\subsection{Overview of the paper}

In Section \ref{sec:results}, we briefly recall the optimal transport problem between probability measures, and we introduce the notions of Sinkhorn divergence  and Sinkhorn loss. Then, we derive the  asymptotic distributions for the empirical Sinkhorn divergence and  the empirical Sinkhorn loss. We also give the behavior of such statistics when the regularization parameter $\varepsilon$ tends to zero at a rate depending on the number of available observations. A bootstrap procedure is discussed in Section \ref{sec:bootstrap}. Numerical experiments are reported in Section \ref{sec:numeric} for synthetic data and in Section \ref{sec:real} for  real data.

\section{Distributional limits for entropy-regularized optimal transport}\label{sec:results}

In this section, we give  results on the  asymptotic distributions of the empirical Sinkhorn divergence and  the empirical Sinkhorn loss. The proofs rely on the use of the delta-method and on the property that $W_{p,\varepsilon}^p(\mu,\nu)$ is a differentiable function with respect to $\mu$ and $\nu$.

\subsection{Notation and definitions}\label{subsec:def}

We first introduce various notation and definitions that will be used throughout the paper.

\subsubsection{Optimal transport, Sinkhorn divergence and Sinkhorn loss}

Let $(\XX,d)$ be a complete metric space with $d:\XX\times\XX\rightarrow\R_+$. We denote by $\PP_p(\XX)$ the set of Borel probability measures $\mu$ supported on $\XX$ with finite moment of order $p \geq 1$, in the sense that $\int_{\XX} d^{p}(x,y) d\mu(x)$ is finite for some (and thus for all) $y \in \XX$. The $p$-Wasserstein distance between two measures $\mu$ and $\nu$ in $\PP_p(\XX)$ is defined by
\begin{equation}
\label{def_Wass}
W_p(\mu,\nu)=\left(\uinf{\pi\in\Pi(\mu,\nu)}\ \iint_{\XX^2}d^{p}(x,y)d\pi(x,y)\right)^{1/p}
\end{equation}
where the infimum is taken over  the set  $\Pi(\mu,\nu)$ of probability measures $\pi$ on the product space $\XX\times\XX$ with respective marginals $\mu$ and $\nu$.

In this work, we consider the specific case where $\XX=\{x_1,\ldots,x_N\}$ is a finite metric space of size $N$. In this setting, a measure $\mu \in \PP_p(\XX)$ is discrete, and we write  $\mu=\sum_{i=1}^N a_i\delta_{x_i}$ where $(a_1,\ldots,a_N)$ is a vector of positive weights belonging to the simplex $\Sigma_N:=\{a=(a_i)_{i=1,\ldots,N}\in\R_+^N\ \mbox{such that}\ \sum_{i=1}^Na_i=1 \}$ and $\delta_{x_i}$ is a Dirac measure at location  $x_i$. 
Therefore, computing the $p$-Wasserstein distance between discrete probability measures supported on $\XX$ amounts to solve a linear program  whose solution is constraint to belong to the convex set $\Pi(\mu,\nu)$. However, the cost of this convex minimization becomes  prohibitive for moderate to large values of $N$.
Regularizing a complex problem with an entropy term is a classical approach in optimization  to reduce its  complexity \cite{wilson1969use}.
This is the approach followed in  \cite{cuturi} by adding  an entropy regularization to the transport matrix, which yields the strictly convex (primal) problem \eqref{primal} presented below.

As the space $\XX$ is  fixed, a probability measure supported on $\XX$  is entirely characterized by a vector of weights in the simplex. By a slight abuse of notation, we thus identify a measure $\mu  \in \PP_p(\XX)$ by its vector of weights $a=(a_1,\ldots,a_n)\in\Sigma_N$ (and we sometimes write $a = \mu$).

\begin{defi}[Sinkhorn divergence]
Let $\varepsilon > 0$ be a regularization parameter. The Sinkhorn divergence \cite{cuturi} between two  probability measures  $\mu = \sum_{i=1}^N a_i\delta_{x_i}$ and $\nu = \sum_{i=1}^N b_i\delta_{x_i}$ in $\PP_p(\XX)$ is defined by
\begin{equation}
\label{primal}
W_{p,\varepsilon}^p(a,b)=\umin{T\in U(a,b)}\langle T,C\rangle+\varepsilon H(T|a\otimes b), \mbox{ with $a$ and $b$ in $\Sigma_N$},
\end{equation}
where $\langle \cdot , \cdot \rangle$ denotes the usual inner product between matrices, $a\otimes b$ denotes the tensor product $(x_i,x_j)\mapsto a_ib_j$ and
\begin{description}
\item[-] $U(a,b)=\{T\in\R_{+}^{N\times N} \ \vert T\1_N=a,T^T\1_N=b\}$ is the set of transport matrices with marginals $a$ and $b$ (with $\1_N$ denoting the vector of $\R^{N}$ with all entries equal to one) ,
\item[-] $C \in \R_{+}^{N\times N}$  is the pairwise cost matrix associated to the metric space $(X,d)$ whose $(i,j)$-th entry is $c_{ij} =d(x_i,x_j)^p$,
\item[-] the regularization function $H(T|a\otimes b)=\sum_{i,j}\log\left(\frac{t_{ij}}{a_ib_j}\right)t_{ij}$ is the relative entropy for a transport matrix $T \in U(a,b)$.
\end{description}
\end{defi}

\begin{rmq}
This entire section is also valid for symmetric positive cost matrices $C$ for which $C(x_i,x_i)=0$.
\end{rmq}

The dual version of problem \eqref{primal} is introduced in the following definition.

\begin{defi}[Dual problem] Following \cite{cuturi2013fast}, the dual version of the minimization problem \eqref{primal} is given by 
\begin{equation}
\label{dual}
W_{p,\varepsilon}^p(a,b)=\umax{\ualpha,\vbeta\in\R^N}\ualpha^Ta+\vbeta^Tb-\varepsilon\sum_{i,j} \left(e^{-\frac{1}{\varepsilon}(c_{ij}- 1-\ualpha_i-\vbeta_j )} \right)a_ib_j.
\end{equation}
We denote by $\SS_{\varepsilon}(a,b)$ the set of optimal solutions of the maximization problem \eqref{dual}.
\end{defi}

It is now well known that there exists an explicit relationship between the optimal solutions of  primal \eqref{primal} and dual \eqref{dual} problems. These solutions can be computed through an iterative method called Sinkhorn's algorithm \cite{cuturi2013fast} that is described below and which explicitly gives this relationship.
\begin{proposition}[Sinkhorn's algorithm]\label{prop:sinkhorn}
Let  $K=\exp(- C/\varepsilon-\1_{N\times N})$ be the elementwise exponential of the matrix cost $C$ divided by $-\varepsilon$ minus the matrix with all entries equal to $1$. Then, there exists a pair of vectors $(\tilde{u},\tilde{v}) \in \R_{+}^{N} \times \R_{+}^{N}$ such that the optimal solutions $T_{\varepsilon}^{\ast}$ and $(\ualpha_{\varepsilon}^{\ast},\vbeta_{\varepsilon}^{\ast})$ of problems \eqref{primal} and \eqref{dual} are respectively given by
$$
T_{\varepsilon}^{\ast} = [\diag(\tilde{u}) K \diag(\tilde{v})]\odot(a\otimes b), \mbox{ and } \ualpha_{\varepsilon}^{\ast} = \varepsilon\log(\tilde{u}), \;  \vbeta_{\varepsilon}^{\ast} = \varepsilon\log(\tilde{v}).
$$
where $\odot$ is the pointwise multiplication. Moreover, such a pair $(\tilde{u},\tilde{v})$ is unique up to scalar multiplication (or equivalently $(\ualpha_{\varepsilon}^{\ast},\vbeta_{\varepsilon}^{\ast})$ is unique up to translation), and it can be recovered as a fixed point of the Sinkhorn map
\begin{equation}
(\tilde{u},\tilde{v})\in\R^N\times\R^N\mapsto (a/(K\tilde{v}),b/(K^T\tilde{u})).
\label{Sinkhorn_map}
\end{equation}
where $K^T$ is the transpose of $K$ and $/$ stands for the component-wise division.
\end{proposition}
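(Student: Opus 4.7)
The strategy is to derive the claimed parametric form of the optimum $T_\lambda^\ast$ via Lagrangian duality, identify the dual optimizers, and then invoke the classical Sinkhorn--Knopp theorem for the existence and uniqueness (up to scaling) of the fixed point.

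First I would observe that the primal problem \eqref{primal} is the minimization of a strictly convex, continuous function (since $-h$ is strictly convex on the positive orthant with $0\log 0 := 0$, and $T\mapsto\langle T,C\rangle$ is linear) over the compact polytope $U(a,b)$. Assuming $a,b\in\Sigma_N$ have positive entries (otherwise one first restricts to the support), the product $ab^T$ lies in the relative interior of $U(a,b)$ and Slater's condition holds, so strong duality applies and the unique minimizer $T_\lambda^\ast$ is characterized by the KKT conditions. Introducing multipliers $\alpha,\beta\in\R^N$ for the marginal constraints $T\1_N=a$ and $T^T\1_N=b$, the Lagrangian is
\[
\mathcal{L}(T,\alpha,\beta)=\langle T,C\rangle-\lambda h(T)-\alpha^T(T\1_N-a)-\beta^T(T^T\1_N-b),
\]
and the stationarity condition $\partial\mathcal{L}/\partial t_{ij}=c_{ij}+\lambda(\log t_{ij}+1)-\alpha_i-\beta_j=0$ yields $t_{ij}^\ast=\exp\bigl((\alpha_i+\beta_j-c_{ij})/\lambda-1\bigr)$. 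Defining $u_i=\exp(\alpha_i^\ast/\lambda-\tfrac{1}{2})$ and $v_j=\exp(\beta_j^\ast/\lambda-\tfrac{1}{2})$, one obtains $T_\lambda^\ast=\diag(u)K\diag(v)$ with $K=\exp(-C/\lambda)$, which (up to the sign convention matched with the dual \eqref{dual}) gives $\alpha_\lambda^\ast=-\lambda\log u$ and $\beta_\lambda^\ast=-\lambda\log v$ as claimed. Substituting this parametric form into the dual objective recovers \eqref{dual} and confirms that $(\alpha_\lambda^\ast,\beta_\lambda^\ast)$ is the dual optimizer.

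Next I would translate the marginal constraints back into equations on $(u,v)$. The condition $T_\lambda^\ast\1_N=a$ becomes $u\odot(Kv)=a$, i.e. $u=a/(Kv)$, and analogously $v=b/(K^T u)$. Stacking these two equalities is precisely the fixed-point equation $(u,v)=S_{\{a,b\}}(u,v)$ for the Sinkhorn map defined in \eqref{Sinkhorn_map}. Conversely, any positive $(u,v)$ satisfying these equations produces a matrix $\diag(u)K\diag(v)$ in $U(a,b)$ whose entries obey the KKT stationarity condition, hence equals the unique primal minimizer $T_\lambda^\ast$.

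The remaining task is existence of such a fixed point and uniqueness up to scalar multiplication. The ambiguity $(u,v)\mapsto(tu,\,v/t)$ for $t>0$ leaves $\diag(u)K\diag(v)$ invariant, so some ambiguity is unavoidable; the main technical point is that this is the only one. I would invoke here the classical Sinkhorn--Knopp theorem: since $K$ has strictly positive entries, there exist positive diagonal scalings $\diag(u),\diag(v)$, unique up to $(u,v)\sim(tu,v/t)$, such that $\diag(u)K\diag(v)$ has prescribed positive marginals $a$ and $b$. Existence and convergence of the iterates of $S_{\{a,b\}}$ toward this fixed point can be obtained through a Birkhoff--Hopf contraction argument for positive linear maps in the Hilbert projective metric on $\R_+^N$, and this is the step I expect to be the main obstacle, as it is the only part that cannot be read off from elementary convex-analytic manipulations. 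Combined with the uniqueness of $T_\lambda^\ast$ already granted by strict convexity, this completes the characterization.
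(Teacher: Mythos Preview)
The paper does not actually supply a proof of this proposition: it is stated as a known result, with attribution to Cuturi and Doucet \cite{cuturi2013fast} in the sentence immediately preceding it, and the text moves on directly after the statement. So there is nothing in the paper to compare your argument against.

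That said, your outline is the standard derivation and is essentially correct. One point worth tightening: the sign bookkeeping you gloss over with ``up to the sign convention matched with the dual \eqref{dual}'' is not quite right as written. From the KKT stationarity you get $t_{ij}^\ast = u_i K_{ij} v_j$ with $u_i \propto e^{\alpha_i/\lambda}$, which gives $\alpha_i^\ast = \lambda\log u_i$ up to an additive constant, not $-\lambda\log u_i$. The minus sign in the proposition's formula reflects a specific (and arguably inconsistent) convention in the paper; you should either match it explicitly or flag the discrepancy rather than absorbing it into a vague remark. Everything else---the Slater/KKT reduction, the equivalence of the marginal constraints with the Sinkhorn fixed-point equation, and the appeal to the Sinkhorn--Knopp theorem (or Birkhoff--Hopf contraction) for existence and uniqueness up to scaling---is the right route.
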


\begin{rmq}
When the cost matrix  $C$ is defined as $c_{ij} =\|x_i-x_j\|^2$ and the grid points $x_i$ are uniformly spread, the matrix vector products involving $\exp(- C/\varepsilon)$ within the Sinkhorn algorithm can be efficiently performed via separated one dimensional convolutions \cite{2015-solomon-siggraph} without storing $C$ .
\end{rmq}
As discussed in the introduction, an important issue regarding the use of  Sinkhorn divergence for testing the equality of two distributions is that it leads to a biased statistics in the sense that  $W_{p,\varepsilon}^p(a,b)$ is not equal to zero under the null hypothesis $a = b$. A possible alternative to avoid this issue is to consider the so-called notion of Sinkhorn loss \cite{feydy2018interpolating,2017-Genevay-AutoDiff} as defined below.

\begin{defi}[Sinkhorn loss] Let $\varepsilon > 0$ be a regularization parameter. The Sinkhorn loss  between two  probability measures  $\mu = \sum_{i=1}^N a_i\delta_{x_i}$ and $\nu = \sum_{i=1}^N b_i\delta_{x_i}$ in $\PP_p(\XX)$ is defined by
\begin{equation}
\WB_{p,\varepsilon}^p(a,b) := W_{p,\varepsilon}^p(a,b) - \frac{1}{2} \left(W_{p,\varepsilon}^p(a,a) +W_{p,\varepsilon}^p(b,b)\right),
\end{equation}
\label{def_Sloss}
\end{defi}

The Sinkhorn loss is not a distance between probability distributions, but it satisfies various interesting properties for the purpose of this paper, that are summarized below.

\begin{proposition} \label{prop:sinkloss}
The Sinkhorn loss satisfies the following three key properties (see Theorem 1 in \cite{feydy2018interpolating}):
\begin{enumerate}
\item[(i)] $\WB_{p,\varepsilon}^p(a,b)\geq 0$,
\item[(ii)] $\WB_{p,\varepsilon}^p(a,b) = 0 \Leftrightarrow a = b$,
\item[(iii)] $\WB_{p,\varepsilon}^p(a,b)\underset{\varepsilon\rightarrow 0}{\longrightarrow} W_p^p(a,b)$.
\end{enumerate}
\end{proposition}

From  Proposition \ref{prop:sinkloss}, we have that $a=b$ is equivalent to $\WB_{p,\varepsilon}^p(a,b)=0$, therefore the function $(a,b)\mapsto \WB_{p,\varepsilon}^p(a,b)$ reaches its global minimum at $a=b$, implying that the gradient of the Sinkhorn loss is zero when $a=b$ which is summarized  in the following corollary.

\begin{cor}\label{cor:gradient_null}
For any $a\in\Sigma_N$, the gradient of the Sinkhorn loss satisfies $\nabla \WB_{p,\varepsilon}^p(a,a)=0$.
\end{cor}

\subsubsection{Statistical notations}

We denote by $\overset{\LL}{\longrightarrow}$ the convergence in distribution of a random variable and $ \overset{\P}{\longrightarrow}$ the convergence in probability. The notation $G\overset{\LL}{\sim}a$ means that $G$ is a random variable taking its values in $\XX$ with law $a=(a_1,\ldots,a_n)\in\Sigma_N$ (namely that $\P(G = x_{i}) = a_{i}$ for each $1 \leq i \leq N$). Likewise $G\overset{\LL}{\sim}H$ stands for the equality in distribution of the random variables $G$ and $H$.

Let $a,b\in\Sigma_N$ and $\hat{a}_n$ and $\hat{b}_m$ be the empirical measures respectively generated by iid samples $X_1,\ldots,X_n\overset{\LL}{\sim} a$ and $Y_1,\ldots,Y_m\overset{\LL}{\sim}  b$, that is
\begin{equation}
\label{empi_measure}
\hat{a}_n=(\hat{a}_{n}^{x})_{x\in \XX},\ \mbox{where}\ \hat{a}_{n}^{x_i}=\frac{1}{n}\sum_{j=1}^n\1_{\{X_j=x_i\}}=\frac{1}{n}\#\{j:X_j=x_i\}\ \mbox{for all}\ 1\leq i\leq N .
\end{equation}
We also define the multinomial covariance matrix
\[\Sigma(a)=\begin{bmatrix}
a_{x_1}(1-a_{x_1}) & -a_{x_1}a_{x_2} & \cdots & -a_{x_1}a_{x_N} \\
-a_{x_1}a_{x_2} & a_{x_2}(1-a_{x_2}) & \cdots & -a_{x_2}a_{x_N}\\
\vdots & \vdots & \ddots & \vdots\\
-a_{x_1}a_{x_N} & -a_{x_2}a_{x_N} & \cdots & a_{x_N}(1-a_{x_N})
\end{bmatrix}\]
and the independent Gaussian random vectors $G\sim \mathcal{N}(0,\Sigma(a))$ and $H\sim \mathcal{N}(0,\Sigma(b))$. As classically done in statistics, we say that
$$\left\{\begin{array}{ll}
        H_0  : a=b \mbox{ is the null hypothesis}, \\
        H_1 :  a\neq b \mbox{ is the alternative hypothesis}.
    \end{array} \right.$$
    
\begin{rmq}
As stated in Proposition \ref{prop:sinkhorn}, the dual variables $(\ualpha_{\varepsilon}^{\ast},\vbeta_{\varepsilon}^{\ast})$ solutions of \eqref{dual} for $a$ and $b$ in the simplex are unique up to a scalar addition. Hence for any $t\in\R$,
$$\langle G,\ualpha_{\varepsilon}^{\ast}+ t\1_N \rangle \overset{\LL}{\sim} \langle G,\ualpha_{\varepsilon}^{\ast}\rangle,$$
since $G$ is centered in $0$ and $\1_N'\Sigma(a)\1_N=0$ for $a$ in the simplex.
\end{rmq}

\subsubsection{Notations for differentiation}
For a sufficiently smooth function $f:(x,y)\in\R^N\times\R^N\longmapsto\R$, we denote by $\nabla f$ and $\nabla^2 f$ the gradient and the hessian of the function $f$. In particular, the gradient of $f$ at the point  $(x,y)\in\R^N\times\R^N$ in the direction $(h_1,h_2)\in\R^N\times\R^N$ is denoted by $\nabla f(x,y)(h_1,h_2)$ (this notation also holds for the hessian). Moreover, the first-order partial derivative with respect to the first variable $x$ (resp. $y$) is given by $\partial_{\scriptscriptstyle{1}} f$ (resp. $\partial_{\scriptscriptstyle{2}} f$). Equivalently, the second-order partial derivative is denoted $\partial^2_{\scriptscriptstyle{ij}}f$, with $i\in\{1,2\}, j\in\{1,2\}$.

\subsection{Differentiability of $W_{p,\varepsilon}^p$}\label{subsec:deriv}

As stated at the beginning of the section, the differentiability of $W_{p,\varepsilon}^p$ (in the usual Fr\'echet sense) is needed in order to apply the delta-method. This is proved in the following proposition. 

\begin{proposition}
\label{grad_W}
The functional $(a,b)\mapsto W_{p,\varepsilon}^p(a,b)$ is differentiable in $\Sigma_N\times\Sigma_N$ with gradient
$$\nabla W_{p,\varepsilon}^p(a,b)(h_1,h_2)=\langle \ualpha_{\varepsilon},h_1\rangle + \langle \vbeta_{\varepsilon},h_2\rangle,$$
where $(\ualpha_{\varepsilon},\vbeta_{\varepsilon})\in\SS_{\varepsilon}(a,b)$, the set of optimal solutions of \eqref{dual}.
\end{proposition}
\begin{proof}
From Proposition 2 in \cite{feydy2018interpolating}, $W_{p,\varepsilon}^p$ is G\^{a}teaux differentiable and its derivative reads
$$\nabla W_{p,\varepsilon}^p(a,b)(h_1,h_2)=\langle \ualpha_{\varepsilon},h_1\rangle + \langle \vbeta_{\varepsilon},h_2\rangle,$$
for $(\ualpha_{\varepsilon},\vbeta_{\varepsilon})\in\SS_{\varepsilon}(a,b)$. In order to prove its differentiability (or Fr\'echet differentiability, since $\R^N$ is a finite dimensional space) at the point $(a,b)$, we only need to prove that the operator $\nabla W_{p,\varepsilon}^p$ is continuous (see \textit{e.g.} Prop. 3.2.3. in \cite{zalinescu2002convex}) in $(a,b)$. Suppose that $(a^n,b^n)$ tends to $(a,b)$ when $n$ tends to infinity. Therefore, this convergence also holds in the weak$^{\ast}$ topology for  the probability measures $\mu^n = \sum_{i=1}^N a_i^n\delta_{x_i}, \nu^n = \sum_{i=1}^N b_i^n\delta_{x_i}$ and $\mu = \sum_{i=1}^N a_i\delta_{x_i}, \nu = \sum_{i=1}^N b_i\delta_{x_i}$. We denote by $(\ualpha^n,\vbeta^n)$ the unique couple in $\SS_{\varepsilon}(a^n,b^n)$ such that for an arbitrary $i_0\in\{1,\ldots,N\}, \ualpha^n_{i_0}=0$. Then, we can apply Cauchy-Schwarz inequality and then use Proposition 13 in \cite{feydy2018interpolating} on the convergence of the pair $(\ualpha^n,\vbeta^n)$ of dual variables towards $(\ualpha,\vbeta)\in\SS_{\varepsilon}(a,b)$ (such that $\ualpha_{i_0}=0$),  to obtain that
\begin{align*}
\lim_{(a^n,b^n)\to (a,b)} \Vert \nabla W_{p,\varepsilon}^p(a^n,b^n) -\nabla W_{p,\varepsilon}^p(a,b)\Vert &= \lim_{(a^n,b^n)\to (a,b)}\sup_{\Vert (h_1,h_2)\Vert\leq 1} \vert \langle \ualpha^n-\ualpha,h_1\rangle+\langle\vbeta^n-\vbeta,h_2\rangle\vert\\
\leq \lim_{(a^n,b^n)\to (a,b)}\sup_{\Vert (h_1,h_2)\Vert\leq 1}& \Vert\ualpha^n-\ualpha\Vert \ \Vert h_1\Vert +\Vert\vbeta^n-\vbeta\Vert \ \Vert h_2\Vert \underset{(a^n,b^n)\to (a,b)}{\longrightarrow} 0,
\end{align*}
 which concludes the proof.
\end{proof}

\subsection{Distributional limits for the empirical Sinkhorn divergence}

\subsubsection{Convergence in distribution}
The following theorem is our main result on distributional limits of empirical Sinkhorn divergences. 
\begin{thm}
\label{Th_CLT}
For $a,b\in\Sigma_N$, let $(\ualpha_{\varepsilon},\vbeta_{\varepsilon})\in\SS_{\varepsilon}(a,b)$ be an optimal solution of the dual problem \eqref{dual} and $\hat{a}_n,\hat{b}_m$ be  the empirical measures defined in \eqref{empi_measure}.
Then, the following central limit theorems hold for empirical Sinkhorn divergences.
\begin{enumerate}
\item One sample. As $n \to +\infty$, one has that
\begin{equation}
\label{alt_one}
\sqrt{n}(W_{p,\varepsilon}^p(\hat{a}_n,b)-W_{p,\varepsilon}^p(a,b))\overset{\LL}{\longrightarrow} \ \langle G,\ualpha_{\varepsilon}\rangle.
\end{equation}
\item Two samples.  For $\rho_{n,m}=\sqrt{(nm/(n+m))}$ and $m/(n+m)\rightarrow \gamma\in (0,1)$ as $\min(n,m) \to + \infty$, one has that
\begin{equation}
\label{alt_two}
\rho_{n,m}(W_{p,\varepsilon}^p(\hat{a}_n,\hat{b}_m)-W_{p,\varepsilon}^p(a,b))\overset{\LL}{\longrightarrow} \sqrt{\gamma}\langle G,\ualpha_{\varepsilon}\rangle + \sqrt{1-\gamma} \langle H,\vbeta_{\varepsilon}\rangle.
\end{equation}
\end{enumerate}
\end{thm}

\begin{proof}
Following the proof of Theorem 1 in \cite{sommerfeld2016inference}, we have that (see e.g.\ Theorem 14.6 in \cite{wasserman2013all})
$$\sqrt{n}(\hat{a}_n-a)\overset{\LL}{\longrightarrow} G,\ \mbox{where} \ G\overset{\LL}{\sim}\NN(0,\Sigma(a)),$$
since $n\hat{a}_n$ is a sample of a multinomial probability measure with probability $a$. For the two samples case, we use that
$$\rho_{n,m}((\hat{a}_n,\hat{b}_m)-(a,b))\overset{\LL}{\longrightarrow} (\sqrt{\gamma}G,\sqrt{1-\gamma}H),$$
where $\rho_{n,m}$ and $\gamma$ are the quantities defined  in the statement of  Theorem \ref{Th_CLT}. From Proposition \ref{grad_W}, we can directly apply the delta-method:
\begin{equation}
\sqrt{n}(W_{p,\varepsilon}^p(\hat{a}_n,b)-W_{p,\varepsilon}^p(a,b)) \overset{\LL}{\longrightarrow} \langle G,\ualpha_{\varepsilon}\rangle, \mbox{ as } n \to +\infty, \label{int_alt_one_proof}
\end{equation}
while, for $n$ and $m$ tending to infinity such that $n\wedge m\rightarrow \infty$ and $m/(n+m)\rightarrow \gamma\in (0,1)$, we obtain that
\begin{equation}
\label{int_null_two_proof}
\rho_{n,m}(W_{p,\varepsilon}^p(\hat{a}_n,\hat{b}_m)-W_{p,\varepsilon}^p(a,b))\overset{\LL}{\longrightarrow}  \sqrt{\gamma}\langle G,\ualpha_{\varepsilon}\rangle+ \sqrt{1-\gamma} \langle H,\vbeta_{\varepsilon}\rangle.
\end{equation}
This completes the proof of  Theorem \ref{Th_CLT}.
\end{proof}

\subsubsection{Convergence in probability}
Distributional limits of empirical Sinkhorn divergences may also be characterized by a convergence in probability by the following result which directly follows from the delta-method (see \textit{e.g.} Theorem 3.9.4 in \cite{van1996weak}).

\begin{thm}\label{th:cv_prob_div}
The following asymptotic results hold for empirical Sinkhorn divergences, for any $(\ualpha_{\varepsilon},\vbeta_{\varepsilon})\in\SS_{\varepsilon}(a,b)$.
\begin{enumerate}
\item One sample. As $n \to +\infty$, one has that
$$\sqrt{n}\left(W_{p,\varepsilon}^p(\hat{a}_n,b)-W_{p,\varepsilon}^p(a,b)-\langle \hat{a}_n-a,\ualpha_{\varepsilon}\rangle\right) \overset{\P}{\longrightarrow} 0.$$
\item Two samples -  For $\rho_{n,m}=\sqrt{(nm/(n+m))}$ and $m/(n+m)\rightarrow \gamma\in (0,1)$ as $\min(n,m) \to + \infty$, one has that
$$\rho_{n,m}\left(W_{p,\varepsilon}^p(\hat{a}_n,\hat{b}_m)-W_{p,\varepsilon}^p(a,b)-(\langle\hat{a}_n-a,\ualpha_{\varepsilon}\rangle+\langle \hat{b}_m-b,\vbeta_{\varepsilon}\rangle) \right)\overset{\P}{\longrightarrow} 0.$$
\end{enumerate}
\end{thm}

\begin{proof}
As the map $(h_1,h_2) \mapsto\nabla W_{p,\varepsilon}^p(a,b)(h_1,h_2)$ is defined, linear and continuous on $\R^N\times\R^N$, Theorem 3.9.4 in \cite{van1996weak} allows us to conclude.
\end{proof}

\subsection{Distributional limits for the empirical Sinkhorn loss}\label{subsec:convergence}

\subsubsection{Convergence in distribution}
The following theorems are our main results on distributional limits of the empirical Sinkhorn loss, for which we now distinguish the cases $a\neq b$ (alternative hypothesis) and $a=b$ (null hypothesis). 
\begin{thm}
\label{Th_CLT_loss_H1}
Let $a\neq b$ be two probability distributions in $\Sigma_N$.  Let us denote by $\hat{a}_n,\hat{b}_m$ their  empirical counterparts and by $(\ualpha_{\varepsilon}^{a,b},\vbeta_{\varepsilon}^{a,b})\in\SS_{\varepsilon}(a,b)$ the dual variables which are the optimal solutions of the dual problem $\eqref{dual}$. Then, the following asymptotic results hold.
\begin{enumerate}
\item One sample. As $n \to +\infty$, one has that
\begin{equation}
\label{alt_one_loss}
\sqrt{n}(\WB_{p,\varepsilon}^p(\hat{a}_n,b)-\WB_{p,\varepsilon}^p(a,b)) \overset{\LL}{\longrightarrow} \ \langle G, \ualpha_{\varepsilon}^{a,b}-\frac{1}{2}(\ualpha_{\varepsilon}^{a,a}+ \vbeta_{\varepsilon}^{a,a})\rangle.
\end{equation}
\item Two samples. For $\rho_{n,m}=\sqrt{(nm/(n+m))}$ and $m/(n+m)\rightarrow \gamma\in (0,1)$  as $\min(n,m) \to + \infty$, one has that
\begin{align*}
\label{alt_two_loss}
\rho_{n,m}(\WB_{p,\varepsilon}^p(\hat{a}_n,\hat{b}_m)-\WB_{p,\varepsilon}^p(a,b))\overset{\LL}{\longrightarrow} \sqrt{\gamma} & \langle G,\ualpha_{\varepsilon}^{a,b}-\frac{1}{2}(\ualpha_{\varepsilon}^{a,a}+ \vbeta_{\varepsilon}^{a,a})\rangle\\
+& \sqrt{1-\gamma} \langle H,\vbeta_{\varepsilon}^{a,b}-\frac{1}{2}(\ualpha_{\varepsilon}^{b,b}+ \vbeta_{\varepsilon}^{b,b})\rangle.
\end{align*}
\end{enumerate}
\end{thm}

\begin{proof}
The only difference with the proof of Theorem \ref{Th_CLT} is  the computation of the gradient of $\WB_{p,\varepsilon}^p$, which is given by
\begin{equation}
\label{diff_loss}
\nabla\WB_{p,\varepsilon}^p(a,b)(h_1,h_2)=\langle \ualpha_{\varepsilon}^{a,b}-\frac{1}{2}(\ualpha_{\varepsilon}^{a,a}+ \vbeta_{\varepsilon}^{a,a}),h_1\rangle + \langle \vbeta_{\varepsilon}^{a,b}-\frac{1}{2}(\ualpha_{\varepsilon}^{b,b}+ \vbeta_{\varepsilon}^{b,b}),h_2\rangle.
\end{equation}
The proof of Theorem \ref{Th_CLT_loss_H1} then follows from the same arguments as those used in the proof of Theorem \ref{Th_CLT} .
\end{proof}

Under the null hypothesis $a=b$, the derivation of the distributional limit of either $\WB_{p,\varepsilon}^p(\hat{a}_n,a)$ or $\WB_{p,\varepsilon}^p(\hat{a}_n,\hat{b}_m)$ requires further attention. Indeed, thanks to Proposition \ref{prop:sinkloss}, one has that the function $(a,b)\mapsto \WB_{p,\varepsilon}^p(a,b)$ reaches its global minimum at $a=b$, and therefore the gradient of the Sinkhorn loss satisfies $\nabla\WB_{p,\varepsilon}^p(a,a)=0$. Hence, to obtain the 
distributional limit of the empirical Sinkhorn loss it is necessary to apply a second-order delta-method yielding an asymptotic distribution which is not Gaussian anymore.

\begin{thm}
\label{Th_CLT_loss_H0_one}
Let $a=b$ be a probability distribution on $\Sigma_N$, and denote by $\hat{a}_n$ an empirical measures obtained by independent sampling data from $a$.  Then, as $n$ tends to infinity, the following asymptotic result holds
\begin{equation}
\label{null_one}
n \WB_{p,\varepsilon}^p(\hat{a}_n,a)\overset{\LL}{\longrightarrow} \ \frac{1}{2}\sum_{i=1}^N \lambda_i\chi^2_{\scriptscriptstyle{i}}(1)
\end{equation}
where $\lambda_1,\ldots,\lambda_N$ are the non-negative eigenvalues of the matrix $$\Sigma(a)^{1/2}\partial_{\scriptscriptstyle{11}}^2\WB_{p,\varepsilon}^p(a,a) \Sigma(a)^{1/2},$$ and $\chi^2_{\scriptscriptstyle{1}}(1),\ldots,\chi^2_{\scriptscriptstyle{N}}(1)$ are independent random variables with chi-squared distribution of degree $1$.
\end{thm}
\begin{proof}
From Corollary \ref{cor:gradient_null}, we have that $\nabla\WB_{p,\varepsilon}^p(a,a)=0$. In order to apply a second order delta-method, the Hessian matrix $\nabla^2 \WB_{p,\varepsilon}^p(a,b)$ of the Sinkhorn loss $\WB_{p,\varepsilon}^p(a,b)$ needs to be non-singular in the neighborhood of $a=b$. Note that the Sinkhorn loss is at least $C^3$ (admitting a third continuous differential) on the interior of its domain, as proved in Theorem 2 by \cite{luise2018differential}. Moreover, since the function $a\mapsto W_{p,\varepsilon}^p(a,b)$ is $\varepsilon$-strongly convex (Theorem 3.4, \cite{BCP18}) and $a\mapsto - \frac{1}{2}W_{p,\varepsilon}^p(a,a)$ is (strictly) convex (Proposition 4, \cite{feydy2018interpolating}), we have that the Hessian matrix of $a\mapsto \WB_{p,\varepsilon}^p(a,b)$ is non-singular. We can thus apply Theorem 17 in \cite{sourati2017asymptotic} which states that from second order delta-method, the distributional limits of $n \WB_{p,\varepsilon}^p(\hat{a}_n,a)$ is given by
$$\frac{1}{2}\NN(0,\Sigma(a))^T\partial_{\scriptscriptstyle{11}}^2 \WB_{p,\varepsilon}^p(a,a)\NN(0,\Sigma(a))$$
that can be rewritten as
$$\frac{1}{2}\sum_{i=1}^N \lambda_i\chi^2_{\scriptscriptstyle{i}}(1),$$
where $\lambda_1,\ldots,\lambda_N$ are the eigenvalues of the matrix $\Sigma(a)^{1/2}\partial_{\scriptscriptstyle{11}}^2\WB_{p,\varepsilon}^p(a,a) \Sigma(a)^{1/2}$. This concludes the distributional limit presented in relation \eqref{null_one}.
\end{proof}

In the two samples case, the Hessian matrix is not guaranteed to be non-singular, in which case the asymptotic distribution is degenerated. Nevertheless, we have the following theorem.

\begin{thm}
\label{Th_CLT_loss_H0_two}
Let $a=b$ be a probability distribution on $\Sigma_N$, and denote by $\hat{a}_n,\tilde{a}_m$ two empirical measures obtained by independent sampling data from $a$. Then, let us write the Hessian matrix
$$\nabla^2 \WB_{p,\varepsilon}^p(a,b) =\begin{pmatrix}
A &B\\
B & C 
\end{pmatrix},$$
with $A=\partial_{\scriptscriptstyle{11}}^2 \WB_{p,\varepsilon}^p(a,b), C =\partial_{\scriptscriptstyle{22}}^2  \WB_{p,\varepsilon}^p(a,b)$ and $B= \partial_{\scriptscriptstyle{12}}^2  W_{p,\varepsilon}^p(a,b)$.
If its Schur complement $S = C - B^TA^{-1}B$ is non-singular in a neighborhood of $a=b$, then one has for $m/(n+m)\rightarrow \gamma\in (0,1)$   as $\min(n,m) \to + \infty$
\begin{equation}
\label{null_two}
\frac{nm}{n+m}\ \WB_{p,\varepsilon}^p(\hat{a}_n,\tilde{a}_m)\overset{\LL}{\longrightarrow}  \ \frac{1}{2}\sum_{i=1}^N \tilde{\lambda}_i\chi^2_{\scriptscriptstyle{i}}(1),
\end{equation}
where  $\tilde{\lambda}_1,\ldots,\tilde{\lambda}_N$ are the eigenvalues of the matrix of size $\R^{2N}\times \R^{2N}$ given by
$$
(\sqrt{\gamma}\Sigma(a)^{1/2},\sqrt{1-\gamma}\Sigma(a)^{1/2})\nabla^2\WB_{p,\varepsilon}^p(a,a)(\sqrt{\gamma}\Sigma(a)^{1/2},\sqrt{1-\gamma}\Sigma(a)^{1/2}),
$$
and $\chi^2_{\scriptscriptstyle{1}}(1),\ldots,\chi^2_{\scriptscriptstyle{N}}(1)$ are independent random variables with chi-squared distribution of degree $1$.
\end{thm}

\begin{proof}
As in the proof of Theorem \ref{Th_CLT_loss_H0_one}, we have that both $A$ and $C$ are $\varepsilon$-strongly convex and therefore non-singular. The determinant $det(\nabla^2 \WB_{p,\varepsilon}^p(a,b))=det(A)det(S)$ is therefore non-zero in a neighborhood of $a=b$ if and only if the Schur complement $S$ is invertible in a neighborhood of $a=b$. Therefore, applying  Theorem 17 in \cite{sourati2017asymptotic} as in the one sample case, we obtain the  distributional limit \eqref{null_two}. This completes the proof of Theorem \ref{Th_CLT_loss_H0_two}.
\end{proof}

\begin{rmq}
A sufficient condition to ensure the non-singularity of the Schur matrix $S$ comes from the $\varepsilon$-strong convexity of $A$ and $C$, implying that for any $x\in\R^N$
$$
x^TSx=x^TCx-x^T B^T A B x>\varepsilon\Vert x\Vert^2-\varepsilon^{-1}\Vert Bx\Vert^2.
$$
A sufficient condition for the non-singularity of $S$ is therefore $\varepsilon> \sup_{x \in \R^N} \frac{\Vert Bx\Vert}{\Vert x\Vert}$ at the points $a=b$. Remark that since the global minimum is attained in the critical points $a=b$, we have that the Hessian $\WB_{p,\varepsilon}^p$ is symmetric semi-definite positive at these points. Therefore its Schur complement $S = C - B^TA^{-1}B$ is also semi-definite positive (see \textit{e.g.} Section A.5.5. in \cite{boyd2004convex}).
\end{rmq}

\subsubsection{Convergence in probability}
Limits for empirical Sinkhorn loss can again be established from a corollary of the Delta-method as done in Theorem \ref{th:cv_prob_div}.

\begin{thm}\label{th:cv_prob_loss}
Using the same  notations as introduced in the statement of Theorem \ref{Th_CLT_loss_H1}, the following asymptotic results hold for all $a,b\in\Sigma_N$.
\begin{enumerate}
\item One sample. As $n \to +\infty$, one has that
$$\sqrt{n}\left(\WB_{p,\varepsilon}^p(\hat{a}_n,b)-\WB_{p,\varepsilon}^p(a,b)- \langle \hat{a}_n-a,\ualpha_{\varepsilon}^{a,b}-\frac{1}{2}(\ualpha_{\varepsilon}^{a,a}+ \vbeta_{\varepsilon}^{a,a})\rangle\right) \overset{\P}{\longrightarrow} 0.$$
\item Two samples - For $\rho_{n,m}=\sqrt{(nm/(n+m))}$ and $m/(n+m)\rightarrow \gamma\in (0,1)$ as $\min(n,m) \to + \infty$, one has that
\begin{align*}
\sqrt{n}(\WB_{p,\varepsilon}^p(\hat{a}_n,\hat{b}_m) -\WB_{p,\varepsilon}^p(a,b)-&(\sqrt{\gamma}  \langle \hat{a}_n-a,\ualpha_{\varepsilon}^{a,b}-\frac{1}{2}(\ualpha_{\varepsilon}^{a,a}+ \vbeta_{\varepsilon}^{a,a})\rangle \\
+ & \sqrt{1-\gamma} \langle \hat{b}_m-b,\vbeta_{\varepsilon}^{a,b}-\frac{1}{2}(\ualpha_{\varepsilon}^{b,b}+ \vbeta_{\varepsilon}^{b,b})\rangle))\overset{\P}{\longrightarrow} 0.
\end{align*}
\end{enumerate}
\end{thm}

Note that in the case $a=b$, this simplifies into
\begin{align*}
&\sqrt{n} \WB_{p,\varepsilon}^p(\hat{a}_n,a) \overset{\P}{\longrightarrow} 0\\
& \rho_{n,m} \WB_{p,\varepsilon}^p(\hat{a}_n,\hat{b}_m)\overset{\P}{\longrightarrow} 0.
\end{align*}

\subsection{Link with unregularized optimal transport }

A natural question that arises is the behavior of distributional limits when we let $\varepsilon$ tends to $0$ at an appropriate rate depending on the sample size. Under such conditions, we recover the distributional limit given by  Theorem 1 in Sommerfeld and Munk \cite{sommerfeld2016inference}  in the setting of  unregularized optimal transport.
\begin{thm}
\label{Th_CLT_unreg}
Suppose that $\XX \subset \R^q$, and consider the cost matrix $C$ such that $c_{ij} = \|x_i-x_j\|^p$ where  $\| \cdot \|$ stands for the Euclidean norm. We recall that $\SS_0(a,b)\subset\R^N\times\R^N$ is the set of optimal solutions of the dual problem \eqref{dual} for $\varepsilon=0$.
\begin{enumerate}
\item One sample. Suppose that $(\varepsilon_n)_{n \geq 1}$ is a sequence of positive reals tending to zero such that
\begin{equation}
\lim_{n \to + \infty} \sqrt{n} \varepsilon_n\log(1/\varepsilon_{n}) = 0. \label{eq:condepsn}
\end{equation}
Then, we have that
\begin{equation}
\label{alt_one_unreg}
\sqrt{n}(\WB_{p,\varepsilon_n}^p(\hat{a}_n,b)-\WB_{p,\varepsilon_n}^p(a,b))\overset{\LL}{\longrightarrow} \ \umax{(\ualpha,\vbeta)\in \SS_0(a,b)}\ \langle G,\ualpha\rangle.
\end{equation}
\item Two samples. Suppose that $(\varepsilon_{n,m})$ is a sequence of positive reals tending to zero as $\min(n,m) \to + \infty$ such that
\begin{equation}
\lim_{\min(n,m) \to + \infty} \sqrt{\rho_{n,m}} \varepsilon_{n,m}\log(1/\varepsilon_{n,m}) = 0, \label{eq:condepsnm}
\end{equation}
for $\rho_{n,m}=\sqrt{(nm/(n+m))}$ and $m/(n+m)\rightarrow \gamma\in (0,1)$. Then, one has that
\begin{equation}
\label{alt_two_unreg}
\rho_{n,m}(\WB_{p,\varepsilon_{n,m}}^p(\hat{a}_n,\hat{b}_m)-\WB_{p,\varepsilon_{n,m}}^p(a,b))\overset{\LL}{\longrightarrow} \umax{(\ualpha,\vbeta)\in \SS_0(a,b)}\ \sqrt{\gamma}\langle G,\ualpha\rangle + \sqrt{1-\gamma} \langle H,\vbeta\rangle.
\end{equation}
\end{enumerate}
\end{thm}
\begin{proof}
We will only prove the one sample case as both proofs work similarly. For that purpose, let us consider the decomposition
\begin{align}
\sqrt{n}(\WB_{p,\varepsilon}^p(\hat{a}_n,b)-&\WB_{p,\varepsilon}^p(a,b))=\sqrt{n}(\WB_{p,\varepsilon}^p(\hat{a}_n,b)-W_{p}^p(\hat{a}_n,b) \label{eq:decompW})\\
+&\sqrt{n}(W_{p}^p(\hat{a}_n,b)-W_{p}^p(a,b))+\sqrt{n}(W_{p}^p(a,b)-\WB_{p,\varepsilon}^p(a,b)). \nonumber
\end{align}
From Theorem 1 in \cite{sommerfeld2016inference}, we have that
\begin{equation}
\sqrt{n}(W_{p}^p(\hat{a}_n,b)-W_{p}^p(a,b))\overset{\LL}{\longrightarrow} \ \umax{(\ualpha,\vbeta)\in \SS_0(a,b)} \langle G,\ualpha\rangle. \label{eq:conveps1}
\end{equation}
Since $\XX$ is a finite set, it follows that the cost $c$ is a $L$-Lipschitz function separately in $x \in \XX$ and $y \in \XX$ with respect to the Euclidean distance. Therefore, it satisfies the assumptions of Theorem 1 in \cite{genevay2018sample} that gives a bound on the error between the Sinkorn divergence and the unregularized transport for a given pair of distributions. It follows that for any $a,b\in\Sigma_N$ (possibly random),  
$$0\leq W_{p,\varepsilon}^p(a,b)-W_{p}^p(a,b)\leq 2\varepsilon q \log\left(\frac{e^2 L \diam(\XX)}{\varepsilon\sqrt{q}}\right)$$
where $q$ is the dimension of the support space, and $\diam(\XX)$ is the diameter of $\XX$ (i.e. $\diam(\XX)=\sup_{x,y\in\XX} \Vert x-y\Vert$) which is always finite in the discrete case.
Then, as soon as the sequence  $(\varepsilon_n)_{n \geq 1}$  satisfies \eqref{eq:condepsn}, we obtain that
\begin{equation}
\sup_{(a,b) \in \Sigma_N \times \Sigma_N} \sqrt{n}(W_{p,\varepsilon_n}^p(a,b)-W_{p}^p(a,b))\xrightarrow[n\rightarrow \infty]{}  0. \label{eq:boundeps}
\end{equation}
By definition of the Sinkhorn loss, one has that $ W_{p,\varepsilon}^p(a,b) - \WB_{p,\varepsilon}^p(a,b) =  \frac{1}{2} \left(W_{p,\varepsilon}^p(a,a) +W_{p,\varepsilon}^p(b,b)\right)$. Therefore, using the upper bound \eqref{eq:boundeps}, we get
\begin{equation}
\sqrt{n}(\WB_{p,\varepsilon_n}^p(\hat{a}_n,b)-W_{p}^p(\hat{a}_n,b) \xrightarrow[n\rightarrow \infty]{\mbox{a.s.}} 0 \mbox{ and } \sqrt{n}(W_{p}^p(a,b)-\WB_{p,\varepsilon_n}^p(a,b)) \xrightarrow[n\rightarrow \infty]{\mbox{a.s.}}  0. \label{eq:conveps2}
\end{equation}
Combining \eqref{eq:decompW} with \eqref{eq:conveps1}  and \eqref{eq:conveps2}, and using  Slutsky's theorem allow to complete the proof of Theorem \ref{Th_CLT_unreg}.

\end{proof}

\section{Use of the bootstrap for statistical inference}\label{sec:bootstrap}

The results obtained in Section \ref{sec:results} on the distribution of the empirical Sinkhorn divergence and  Sinkhorn loss are only asymptotic. It is thus of interest to estimate their non-asymptotic distribution using a bootstrap procedure. The bootstrap consists in drawing new samples from an empirical distribution $\hat{\P}_n$ that has been obtained from an unknown distribution $\P$. Therefore, conditionally on $\hat{\P}_n$,  it allows to obtain new observations (considered as approximately sampled from $\P$) that can be used to approximate the distribution of a test statistics using Monte-Carlo experiments. We refer to \cite{efron93bootstrap} for a general  introduction to the bootstrap procedure.

We can apply the delta-method to prove the consistency of the bootstrap in our setting using the bounded Lipschitz metric as defined below.
\begin{defi}
The Bounded Lipschitz (BL) metric between two probability measures   $\mu,\nu$  supported on $\Omega$  is defined by
$$d_{BL}(\mu,\nu)=\underset{h\in BL_1(\Omega)}{\sup} \int_{\Omega} h d(\mu-\nu)$$
where $BL_1(\Omega)$ is the set of real functions $\Omega\rightarrow \R$ such that $\Vert h\Vert_{\infty}+\Vert h\Vert_{\mbox{Lip}}\leq 1$.
\end{defi}
Our main result on the consistency of bootstrap samples can then be stated.  Notice that similar results for the Sinkhorn divergence are obtained straightforward using the same arguments.
\begin{thm}
\label{bootstrap}
Let $a\neq b$ be in the simplex $\Sigma_N$. For $X_1,\ldots,X_n\overset{\LL}{\sim}a$ and $Y_1,\ldots,Y_m\overset{\LL}{\sim}b$, let $\hat{a}_n^{\ast}$ (resp. $\hat{b}_m^{\ast}$) be a bootstrap empirical distribution sampled from $\hat{a}_n$  (resp. $\hat{b}_m$) of size $n$ (resp. $m$).
\begin{enumerate}
\item One sample case:
$\sqrt{n}(\WB_{p,\varepsilon}^p(\hat{a}^{\ast}_n,b)-\WB_{p,\varepsilon}^p(\hat{a}_n,b))$ converges  in distribution (conditionally on $X_1,\ldots,X_n$) to $\langle G, \ualpha_{\varepsilon}^{a,b}-\frac{1}{2}(\ualpha_{\varepsilon}^{a,a}+ \vbeta_{\varepsilon}^{a,a})\rangle$ for the BL metric, in the sense that
\begin{align*}
\underset{h\in BL_1(\R)}{\sup}\vert & \E [h (\sqrt{n}(\WB_{p,\varepsilon}^p(\hat{a}^{\ast}_n,b)-W_{p,\varepsilon}^p(\hat{a}_n,b)))\vert X_1,\ldots,X_n]-\\
&\E [ h\langle G, \ualpha_{\varepsilon}^{a,b}-\frac{1}{2}(\ualpha_{\varepsilon}^{a,a}+ \vbeta_{\varepsilon}^{a,a})\rangle]\vert \overset{\P}{\longrightarrow} 0.
\end{align*}
\item Two samples case:
$\rho_{n,m}(\WB_{p,\varepsilon}^p(\hat{a}^{\ast}_n,\hat{b}^{\ast}_m)-\WB_{p,\varepsilon}^p(\hat{a}_n,\hat{b}_m))$ converges  in distribution (conditionally on $X_1,\ldots,X_n,\, Y_1,\ldots,Y_m$) to $$\sqrt{\gamma}\langle G, \ualpha_{\varepsilon}^{a,b}-\frac{1}{2}(\ualpha_{\varepsilon}^{a,a}+ \vbeta_{\varepsilon}^{a,a})\rangle + \sqrt{1-\gamma}\langle H, \vbeta_{\varepsilon}^{a,b}-\frac{1}{2}(\ualpha_{\varepsilon}^{b,b}+ \vbeta_{\varepsilon}^{b,b})\rangle$$ for the BL metric, in the sense that
\begin{align*}
& \underset{h\in BL_1(\R)}{\sup}\vert \E [h(\rho_{n,m}(\WB_{p,\varepsilon}^p(\hat{a}^{\ast}_n,\hat{b}^{\ast}_m)-  \WB_{p,\varepsilon}^p(\hat{a}_n,\hat{b}_m)))\vert X_1,\ldots,X_n,Y_1,\ldots,Y_m] \\
& -\E [ h(\sqrt{\gamma}\langle G, \ualpha_{\varepsilon}^{a,b}-\frac{1}{2}(\ualpha_{\varepsilon}^{a,a}+ \vbeta_{\varepsilon}^{a,a})\rangle + \sqrt{1-\gamma}\langle H, \vbeta_{\varepsilon}^{a,b}-\frac{1}{2}(\ualpha_{\varepsilon}^{b,b}+ \vbeta_{\varepsilon}^{b,b})\rangle)]\vert \overset{\P}{\longrightarrow} 0
\end{align*}
\end{enumerate}
\end{thm}
\begin{proof}
We only prove the one sample case since the convergence for the two samples case can be shown with similar arguments. We know that $\sqrt{n}(\hat{a}_n-a)$ tends in distribution to $G\sim\mathcal{N}(0,\Sigma(a))$. Moreover $\sqrt{n}(\hat{a}_n^{\ast}-\hat{a}_n)$ converges (conditionally on $X_1,\ldots,X_n$) in distribution to $G$ by Theorem 3.6.1 in \cite{van1996weak}. Then, applying Theorem 3.9.11 in \cite{van1996weak} on the consistency of the delta-method combined with the  bootstrap allows us to obtain the statement of the present Theorem \ref{bootstrap} in  the case $a \neq b$.
\end{proof}

As explained in \cite{chen2018inference}, the standard bootstrap fails under first order degeneracy, meaning for the null hypothesis case $a=b$. However, the authors propose a corrected version -called the Babu correction- of the bootstrap in their Theorem 3.2 given for the one sample case by
\begin{align*}
\underset{h\in BL_1(\R)}{\sup}\vert & \E [h (n\{\WB_{p,\varepsilon}^p( \hat{a}^{\ast}_n,a)-\WB_{p,\varepsilon}^p(\hat{a}_n,a) -\partial_{\scriptscriptstyle{1}}\WB_{p,\varepsilon}^p(\hat{a}_n,a)(\hat{a}^{\ast}_n-\hat{a}_n,a) \})) \vert X_1,\ldots,X_n]-\\
&-\E[h(\partial_{\scriptscriptstyle{11}}^2\WB_{p,\varepsilon}^p(a,a)(G,a)] \vert \overset{\P}{\longrightarrow} 0,
\end{align*}
and for the two samples case by
\begin{align*}
\underset{h\in BL_1(\R)}{\sup}\vert & \E [h (n\{\WB_{p,\varepsilon}^p( \hat{a}^{\ast}_n,\hat{b}^{\ast}_m)-\WB_{p,\varepsilon}^p(\hat{a}_n,\hat{b}_m) - \nabla\WB_{p,\varepsilon}^p(\hat{a}_n,\hat{b}_m)(\hat{a}^{\ast}_n-\hat{a}_n,\hat{b}^{\ast}_m-\hat{b}_m) \})) \vert X_1,\ldots,X_n]-\\
&-\E[h(\nabla^2\WB_{p,\varepsilon}^p(a,a)(\sqrt{\gamma}G,\sqrt{1-\gamma}G)] \vert \overset{\P}{\longrightarrow} 0.
\end{align*}
Note that most of the requirements to apply Theorem 3.2 in  \cite{chen2018inference} are trivial since the distributions are defined on a subset of $\R^N$ and the function $(a,b)\mapsto W_{p,\varepsilon}^p(a,b)$ is twice differentiable on all $\Sigma_N\times\Sigma_N$. However, the (\textit{Assumption 3.3 in \cite{chen2018inference}}) on the second derivative  requires a finer study that is left for future work.  Hence, we stress that the Babu-bootstrap approach that we use in our numerical experiments is missing theoretical guarantees. Nevertheless, the results reported from our experiments on simulated and real data illustrate its correctness.

As
\begin{align*}
\partial_{\scriptscriptstyle{1}}\WB_{p,\varepsilon}^p(\hat{a}_n,a)(\hat{a}^{\ast}_n-\hat{a}_n,a) & = \langle\ualpha^{\hat{a}_n,a},\hat{a}_n^{\ast}-\hat{a}_n\rangle\\
\nabla\WB_{p,\varepsilon}^p(\hat{a}_n,\hat{b}_m)(\hat{a}^{\ast}_n-\hat{a}_n,\hat{b}^{\ast}_m-\hat{b}_m) & = \langle\ualpha^{\hat{a}_n,\hat{b}_m},\hat{a}_n^{\ast}-\hat{a}_n\rangle + \langle\vbeta^{\hat{a}_n,\hat{b}_m},\hat{b}_m^{\ast}-\hat{b}_m\rangle
 \end{align*}
 we can reformulate the Babu bootstrap as follows.
\begin{enumerate}
\item One sample case. For  $(\ualpha_{\varepsilon}^{\hat{a}_n,a},\vbeta_{\varepsilon}^{\hat{a}_n,a})\in\SS_{\varepsilon}(\hat{a}_n,a)$, we have that
\begin{equation}
\label{eq:boot_null_loss_one}
n \left\{\WB_{p,\varepsilon}^p( \hat{a}^{\ast}_n,a)-\WB_{p,\varepsilon}^p(\hat{a}_n,a)-\langle\ualpha^{\hat{a}_n,a},\hat{a}_n^{\ast}-\hat{a}_n\rangle\right\}
\end{equation}
converges  in distribution (conditionally on $X_1,\ldots,X_n,$) to $\partial_{\scriptscriptstyle{11}}^2\WB_{p,\varepsilon}^p(a,a)(G,a)$ for the BL metric.

\item Two samples case. For $(\ualpha_{\varepsilon}^{\hat{a}_n,\hat{b}_m},\vbeta_{\varepsilon}^{\hat{a}_n,\hat{b}_m})\in\SS_{\varepsilon}(\hat{a}_n,\hat{b}_m)$ and $m/(n+m)\rightarrow \gamma\in (0,1)$, the quantity 
\begin{equation}
\label{eq:boot_null_loss_two}
\frac{nm}{n+m} \left\{\WB_{p,\varepsilon}^p(\hat{a}^{\ast}_n,\hat{b}^{\ast}_m)-\WB_{p,\varepsilon}^p(\hat{a}_n,\hat{b}_m)- (\langle\ualpha^{\hat{a}_n,\hat{b}_m},\hat{a}_n^{\ast}-\hat{a}_n\rangle + \langle\vbeta^{\hat{a}_n,\hat{b}_m},\hat{b}_m^{\ast}-\hat{b}_m\rangle)\right\}
\end{equation}
converges  in distribution (conditionally on $X_1,\ldots,X_n,\, Y_1,\ldots,Y_m$) to $$\nabla^2\WB_{p,\varepsilon}^p(a,a)(\sqrt{\gamma}G,\sqrt{1-\gamma}G)$$ for the BL metric.
\end{enumerate}

\section{Numerical experiments with synthetic data}\label{sec:numeric}

We propose to illustrate Theorem \ref{Th_CLT_loss_H1}, Theorem \ref{Th_CLT_loss_H0_one}, Theorem \ref{Th_CLT_loss_H0_two} and  Theorem \ref{bootstrap} with simulated data consisting of random measures supported on a $l \times l$ square lattice (of regularly spaced points) $(x_{i})_{i=1,\ldots,N}$ in $\R^{2}$ (with $N = l^2$) for $l$ ranging from 5 to 20. We use the squared  Euclidean distance as the cost function $C$ which therefore scales with the size of the grid. The range of interesting values for $\varepsilon$ is thus closely linked to the size of the grid, as it can be seen in the expression of $K=\exp(-C/\varepsilon-\1_{N\times N})$. Hence, $\varepsilon=100$ for a $5\times 5$ grid corresponds to more regularization than $\varepsilon=100$ for a $20\times 20$ grid.

We ran our experiments on Matlab using the accelerated version \cite{thibault2017overrelaxed}\footnote{\url{http://www.math.u-bordeaux.fr/~npapadak/GOTMI/codes.html}}  of the  Sinkhorn transport algorithm \cite{cuturi}.  Furthermore, we considered the numerical logarithmic stabilization described in \cite{schmitz2017wasserstein} which allows to handle relatively small values of $\varepsilon$. Indeed, in small regularization regimes, the Sinkhorn algorithm quickly becomes unstable, even more for large grids with a small number of observations.

\subsection{Convergence in distribution}\label{subsec:CVinLaw}
We first illustrate the convergence in distribution of the empirical Sinkhorn loss (as stated in Theorem \ref{Th_CLT_loss_H1}) for the hypothesis $a  \neq b$ with either one sample or two samples.

\subsubsection{Alternative $a\neq b$ - One sample.}
We consider the case where $a$ is the uniform distribution on a square grid and
$$
b \propto \1_N + \theta (1,2,\ldots,N)
$$
is a distribution with linear trend depending on a slope parameter $\theta \geq 0$ that is fixed to $0.5$, see Figure \ref{fig:distributions}.
\begin{figure}[ht!]
\centering
\includegraphics[width=0.4 \textwidth,height=0.3\textwidth]{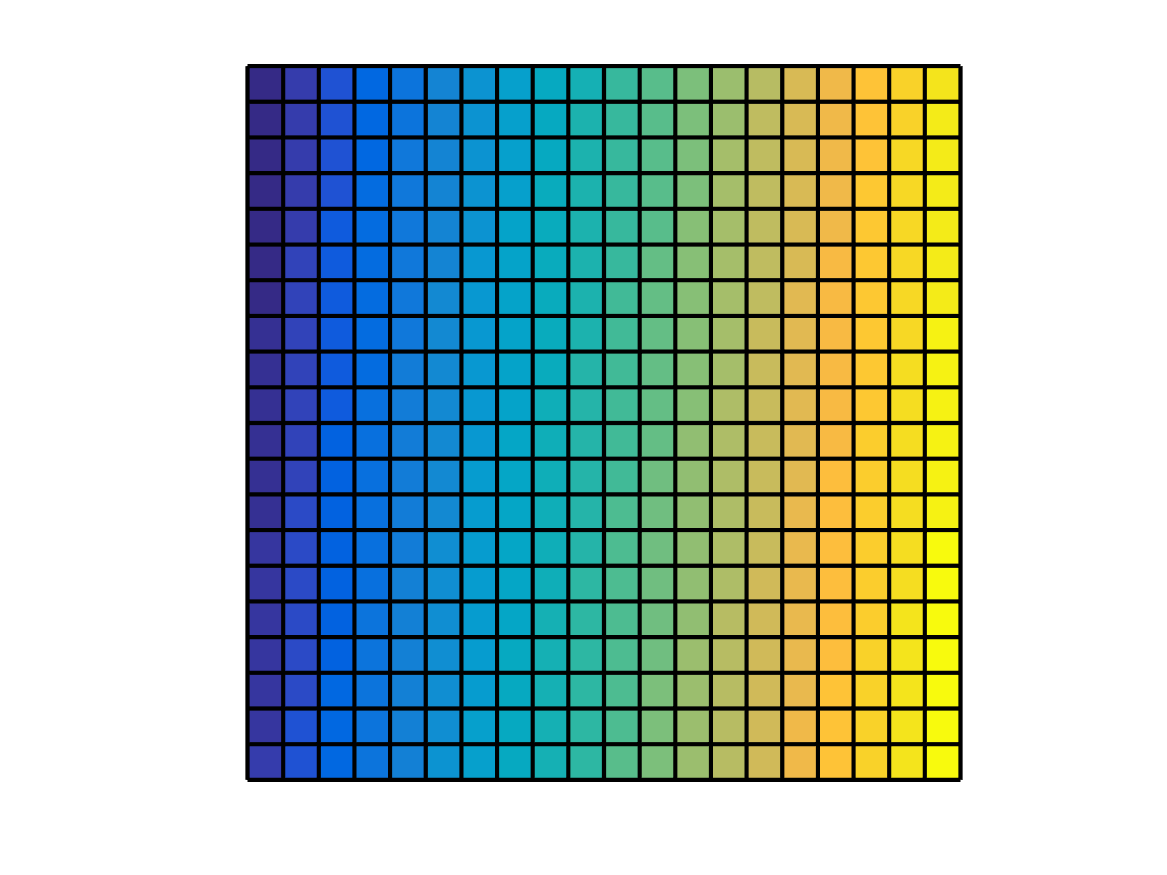}
\caption{\label{fig:distributions}Example of a distribution $b$ with linear trend (with slope parameter $\theta=0.5$ on a $20\times 20$ grid).}
\end{figure}

We generate $M = 10^3$ empirical distributions $\hat{a}_n$ (such that $n \hat{a}_n$ follows a multinomial distribution with parameter $a$) for different values of $n$ and grid size. In this way, we obtain $M$ realizations of $ \sqrt{n}(\WB_{p,\varepsilon}^p(\hat{a}_n,b)-\WB_{p,\varepsilon}^p(a,b))$, and we use a kernel density estimate (with a data-driven bandwidth) to compare the distribution of these realizations to the density of the Gaussian distribution $\langle G,\ualpha_{\varepsilon}^{a,b}-1/2(\ualpha_{\varepsilon}^{a,a}+\vbeta_{\varepsilon}^{a,a})\rangle$. The results are reported in Figure \ref{fig:H1one5} (grid $5\times 5$) and Figure \ref{fig:H1one20} (grid $20\times 20$).
It can be seen that the convergence of the empirical Sinkhorn loss to its asymptotic distribution ($n\to\infty$) is relatively fast. 
\newcommand{\sidecap}[1]{ {\begin{sideways}\parbox{0.2\textwidth}{\centering #1}\end{sideways}} }
\begin{figure}[ht!]
\centering
\begin{tabular}{cccc}
 & $n=10^2$ & $n=10^3$ & $n=10^4$ \\
\sidecap{$\varepsilon=1$} & \includegraphics[width=0.27 \textwidth,height=0.32\textwidth]{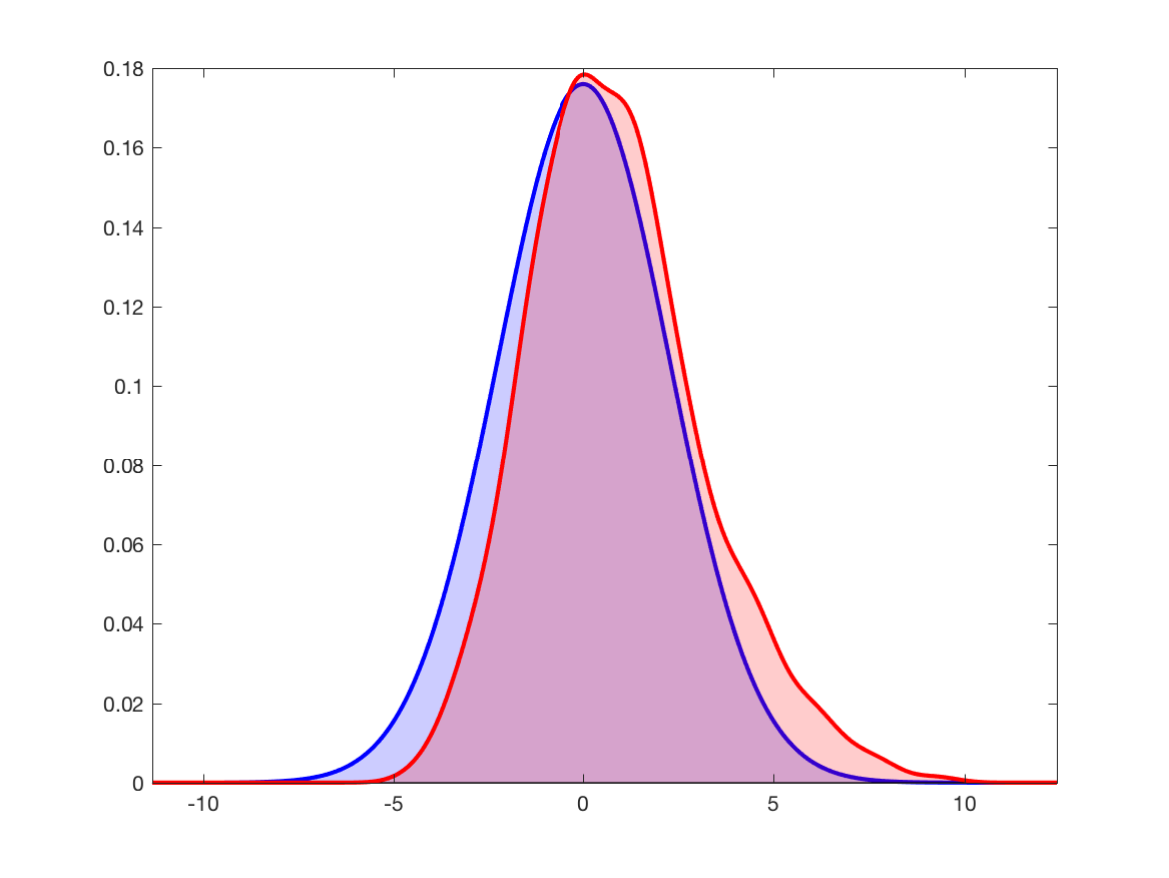} &
\includegraphics[width=0.27 \textwidth,height=0.32\textwidth]{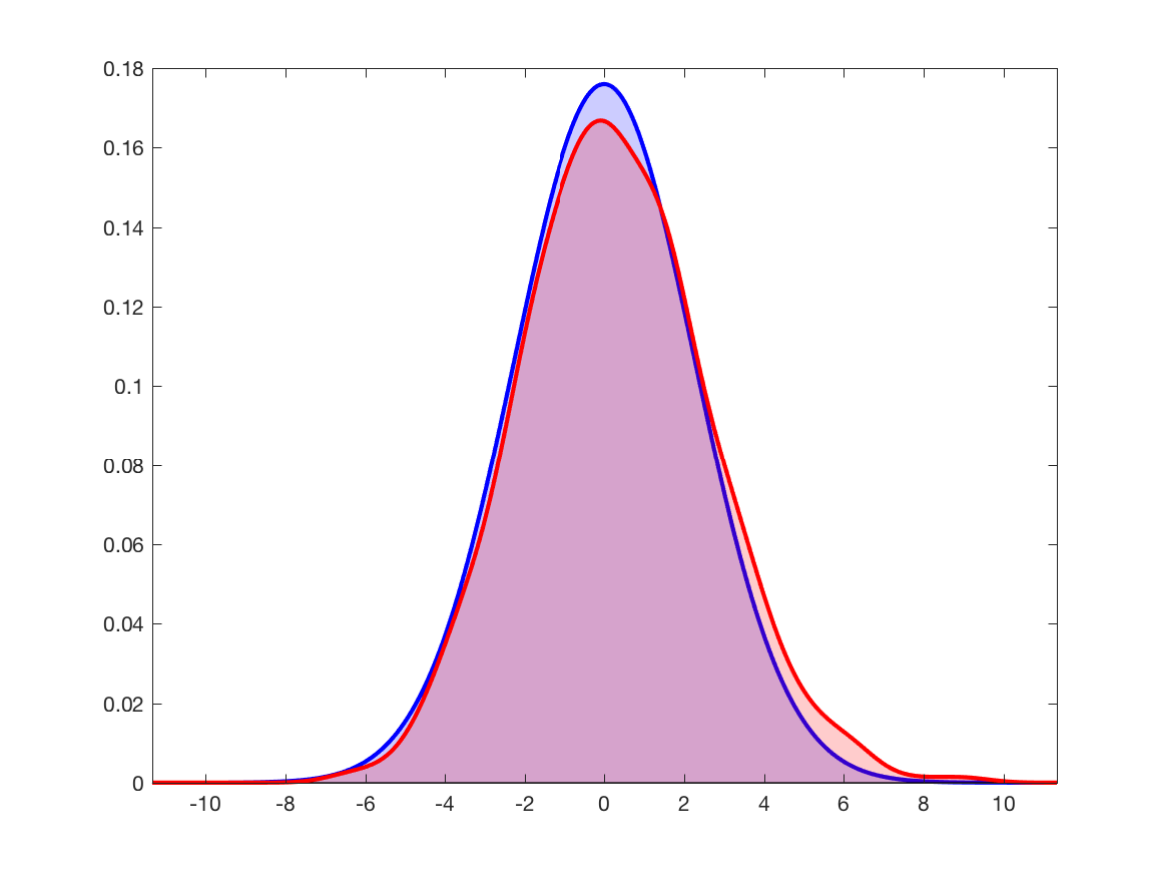} &
\includegraphics[width=0.27 \textwidth,height=0.32\textwidth]{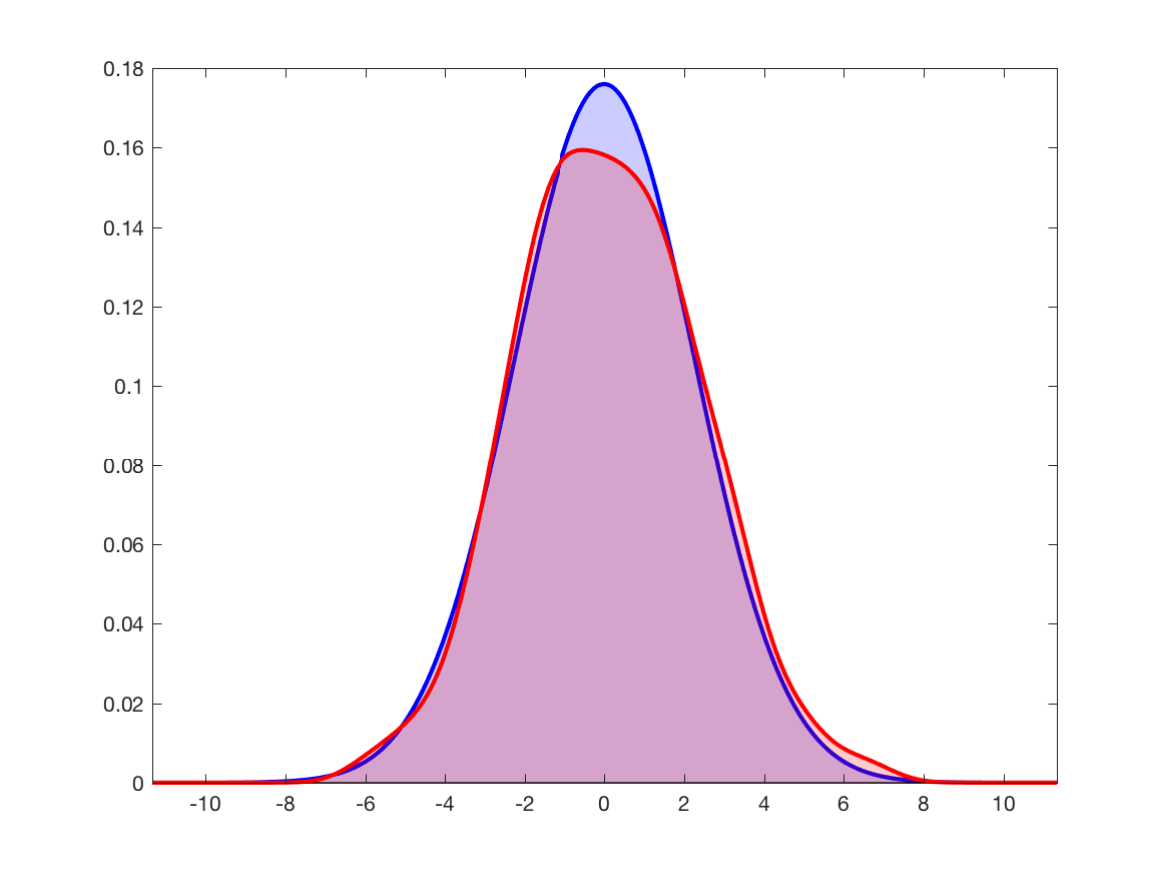} \\

\sidecap{$\varepsilon=10$} & \includegraphics[width=0.27 \textwidth,height=0.32\textwidth]{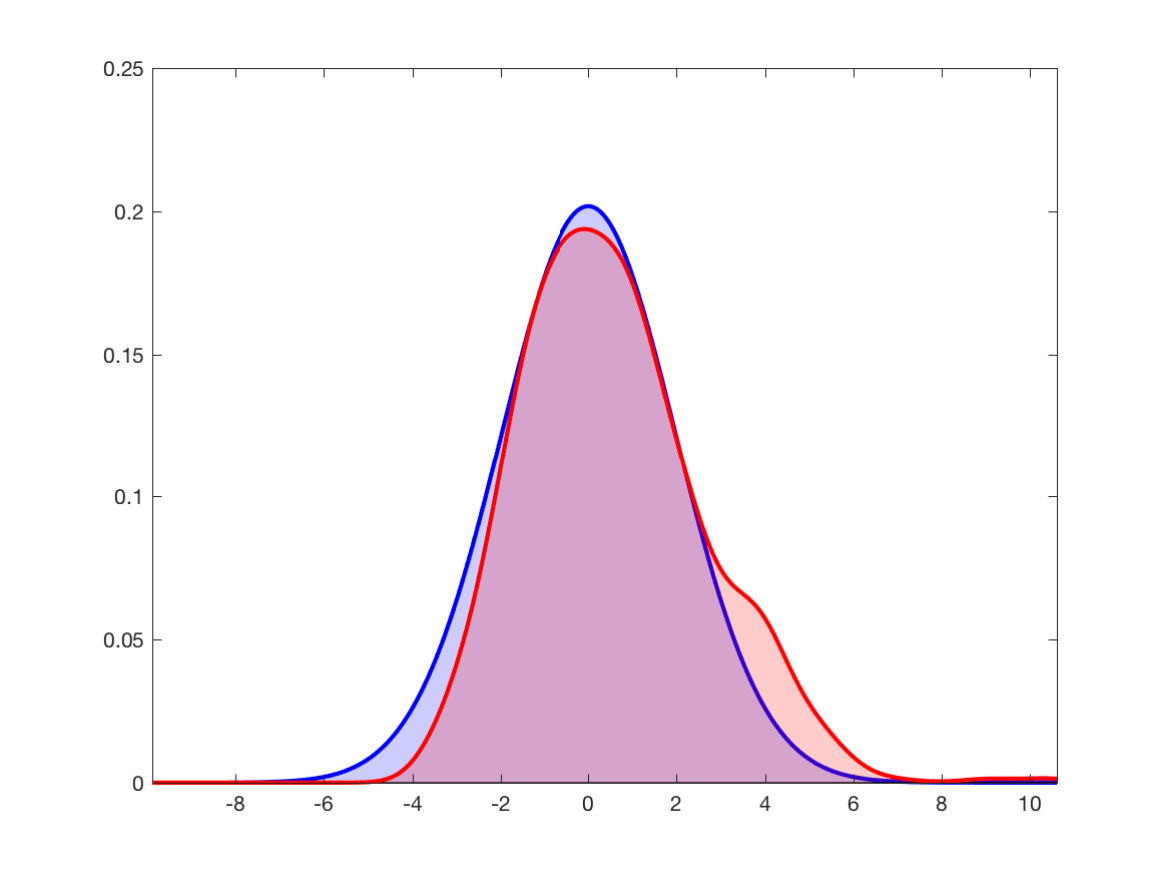} &
\includegraphics[width=0.27 \textwidth,height=0.32\textwidth]{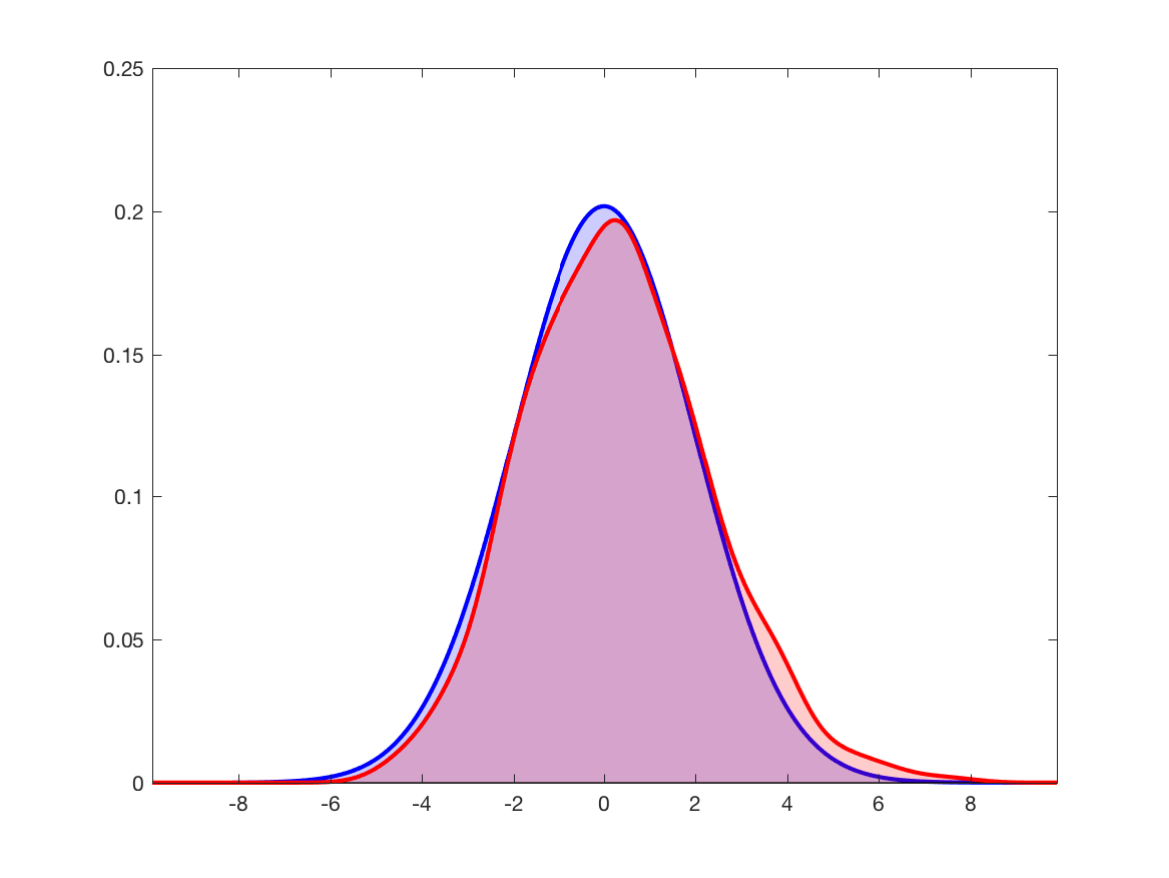} &
\includegraphics[width=0.257 \textwidth,height=0.32\textwidth]{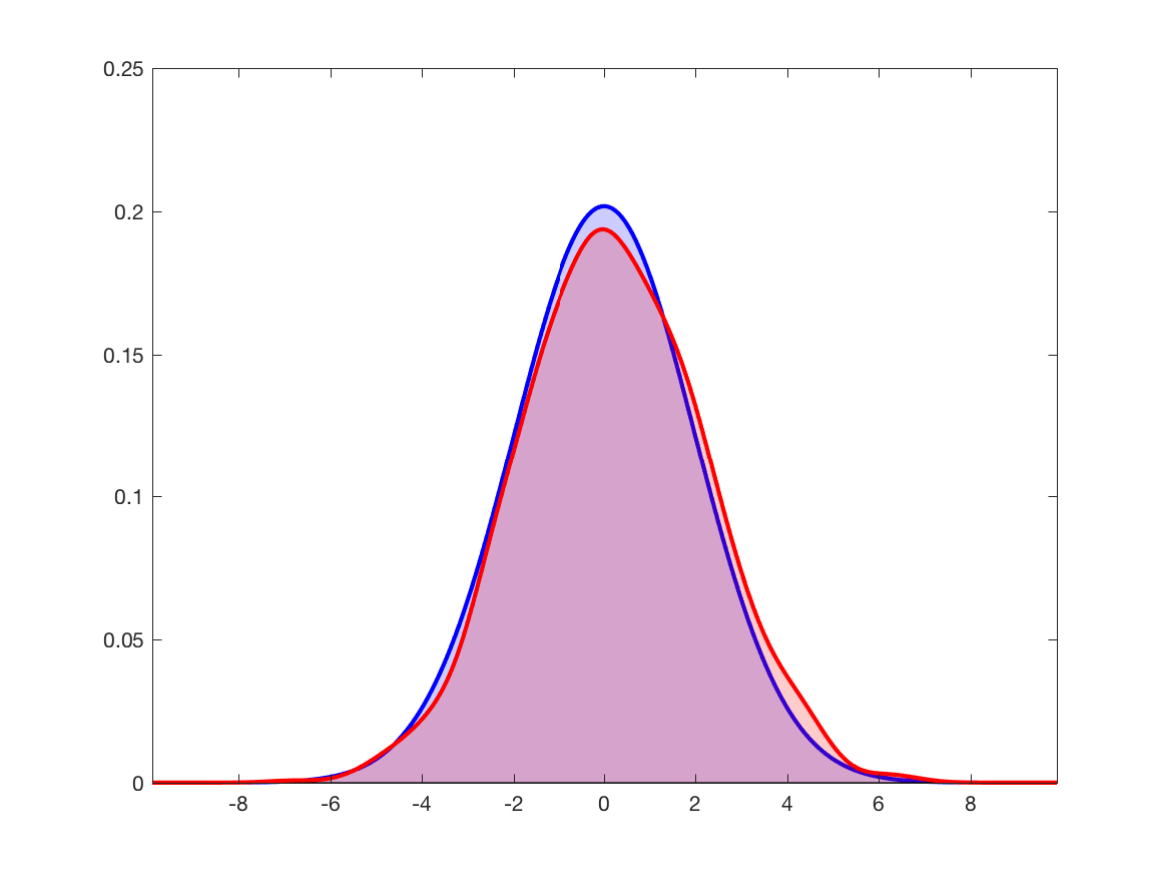} \\

\sidecap{$\varepsilon=100$} & \includegraphics[width=0.27 \textwidth,height=0.32\textwidth]{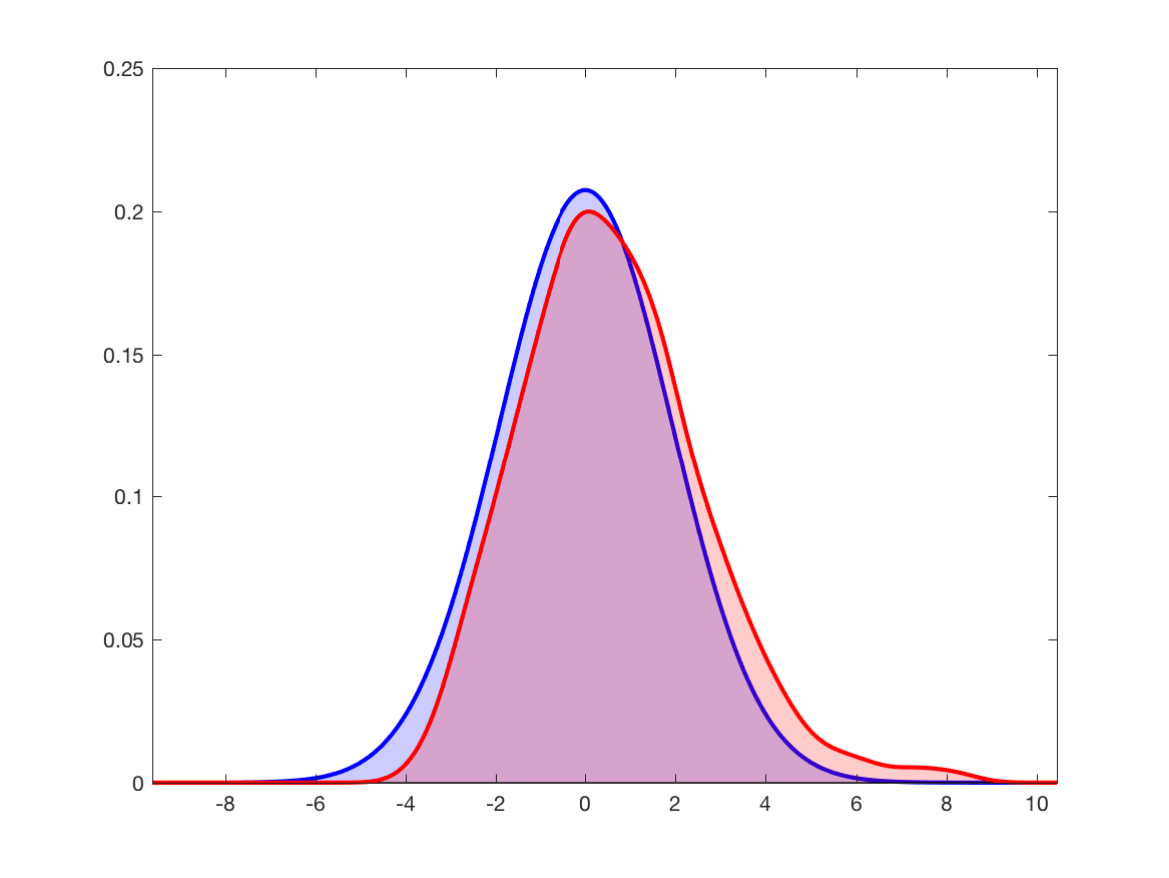} &
\includegraphics[width=0.27 \textwidth,height=0.32\textwidth]{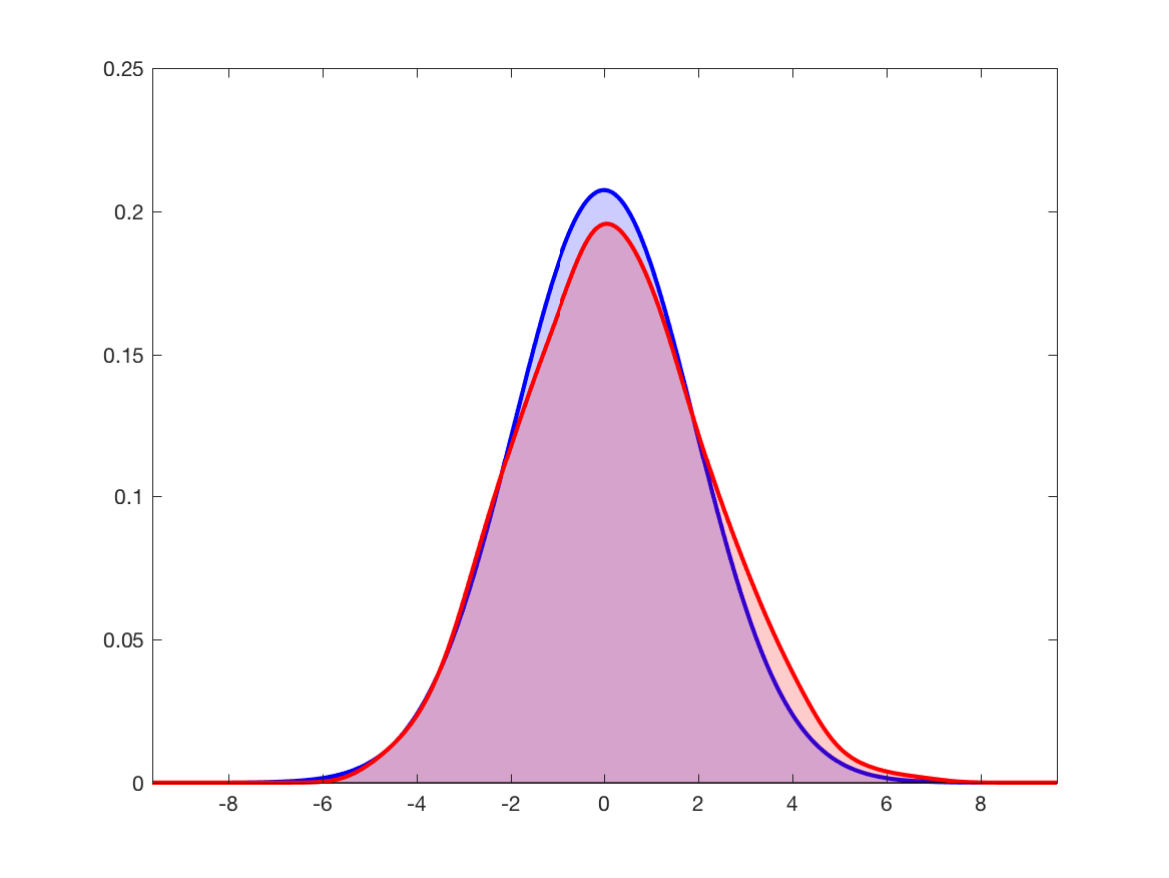} &
\includegraphics[width=0.27 \textwidth,height=0.32\textwidth]{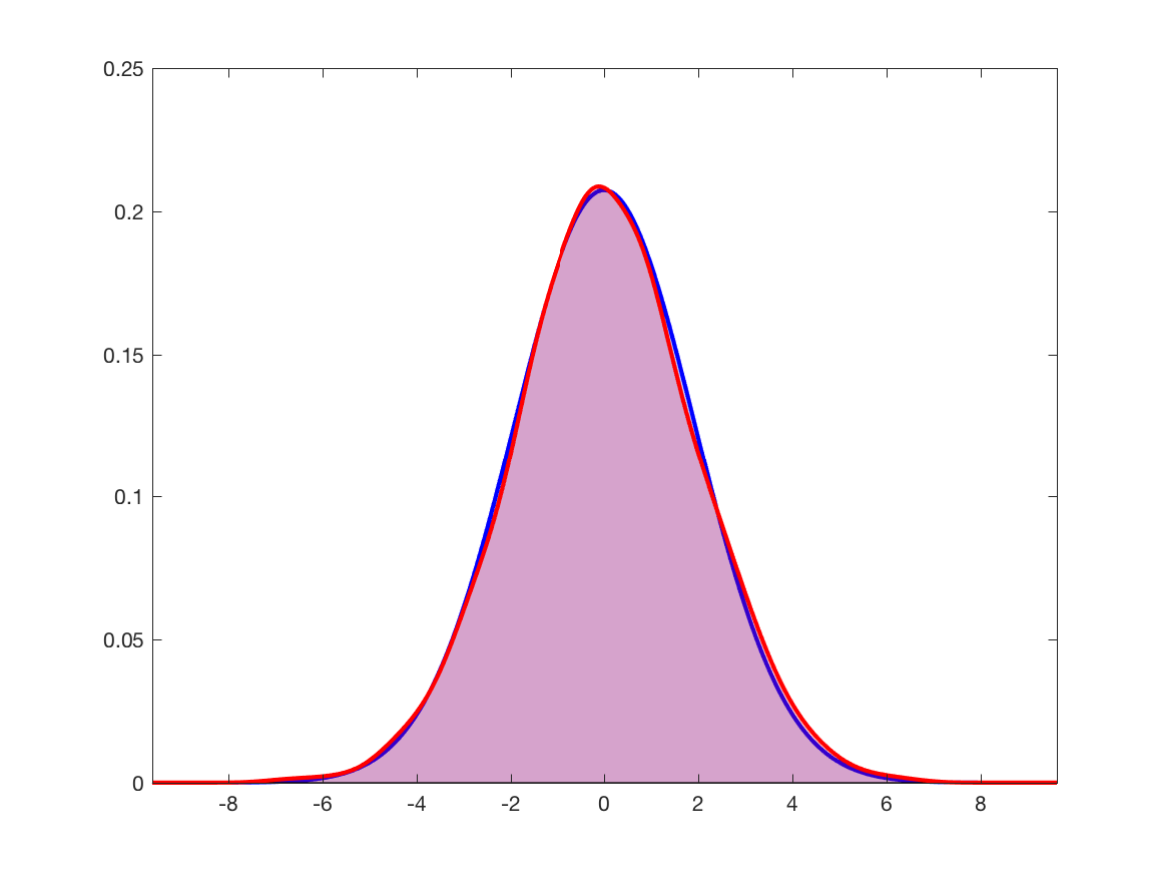}
\end{tabular}
\caption{\label{fig:H1one5} Case $a\neq b$ with one sample. Illustration of the convergence in distribution of  the empirical Sinkhorn loss for a $5\times 5$ grid, $\varepsilon=1,10,100$ and $n$ ranging from $10^2$ to $10^4$. Densities in red  (resp.   light  blue) represent the distribution of $\sqrt{n}(\WB_{p,\varepsilon}^p(\hat{a}_n,b)-\WB_{p,\varepsilon}^p(a,b))$ (resp.\ $\langle G,\ualpha_{\varepsilon}^{a,b}-1/2(\ualpha_{\varepsilon}^{a,a}+\vbeta_{\varepsilon}^{a,a})\rangle$).}
\end{figure}

\renewcommand{\sidecap}[1]{ {\begin{sideways}\parbox{0.2\textwidth}{\centering #1}\end{sideways}} }
\begin{figure}[ht!]
\centering
\begin{tabular}{cccc}
 & $n=10^2$ & $n=10^3$ & $n=10^4$ \\
\sidecap{$\varepsilon=10$} &
\includegraphics[width=0.25 \textwidth,height=0.29\textwidth]{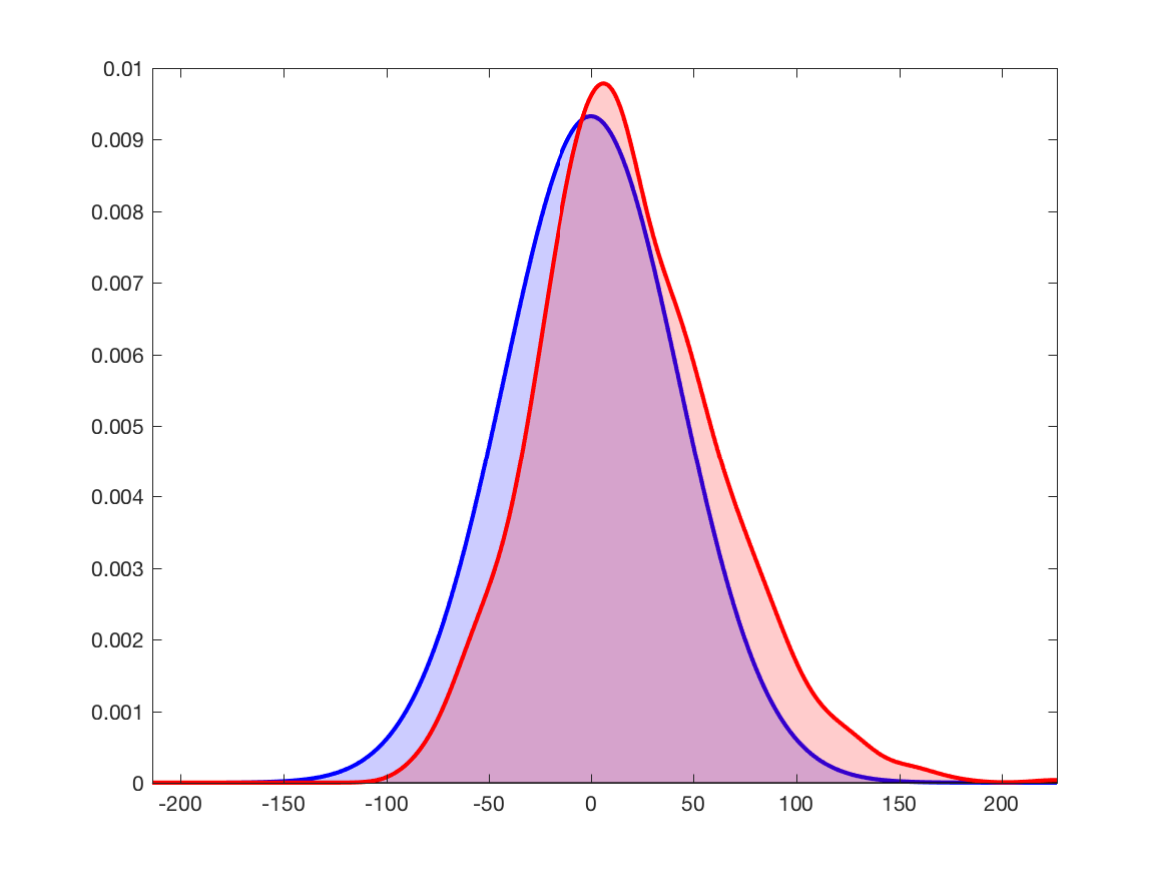} &
\includegraphics[width=0.25\textwidth,height=0.29\textwidth]{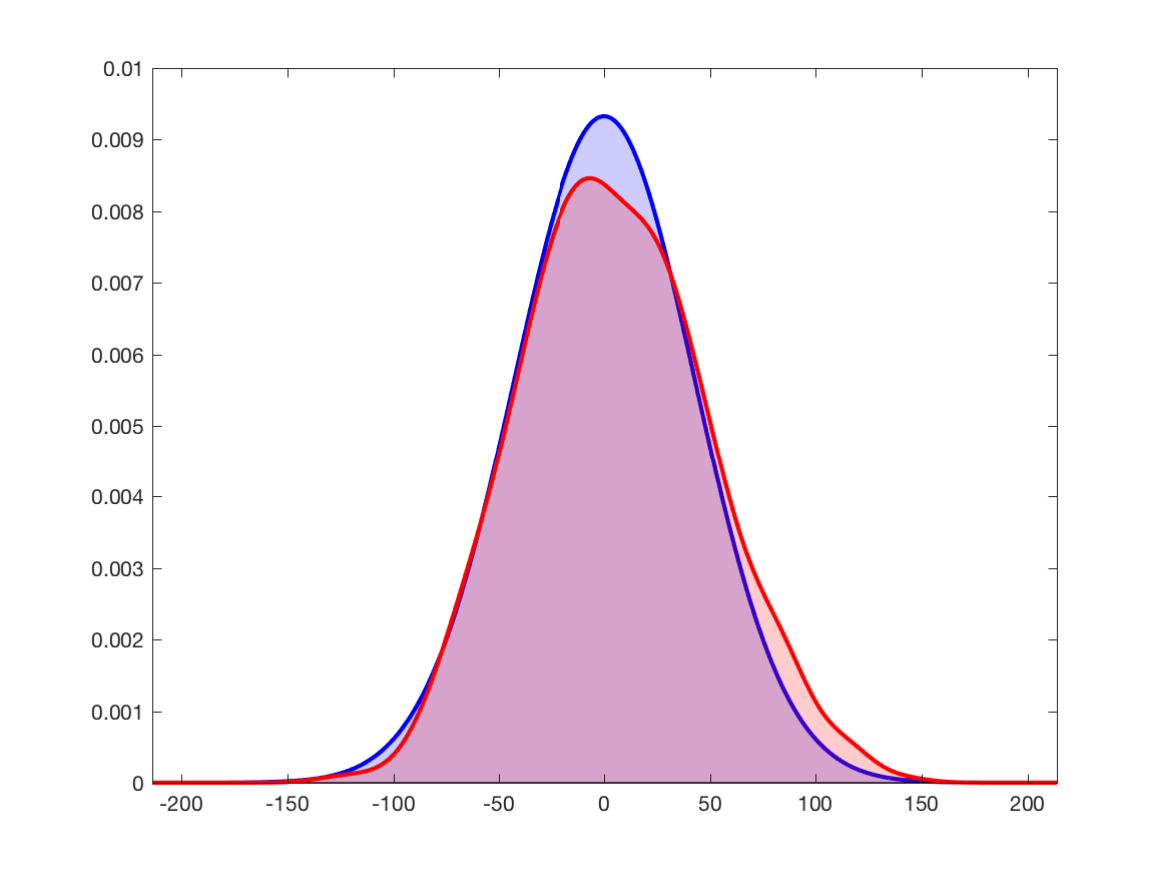} &
 \includegraphics[width=0.25 \textwidth,height=0.29\textwidth]{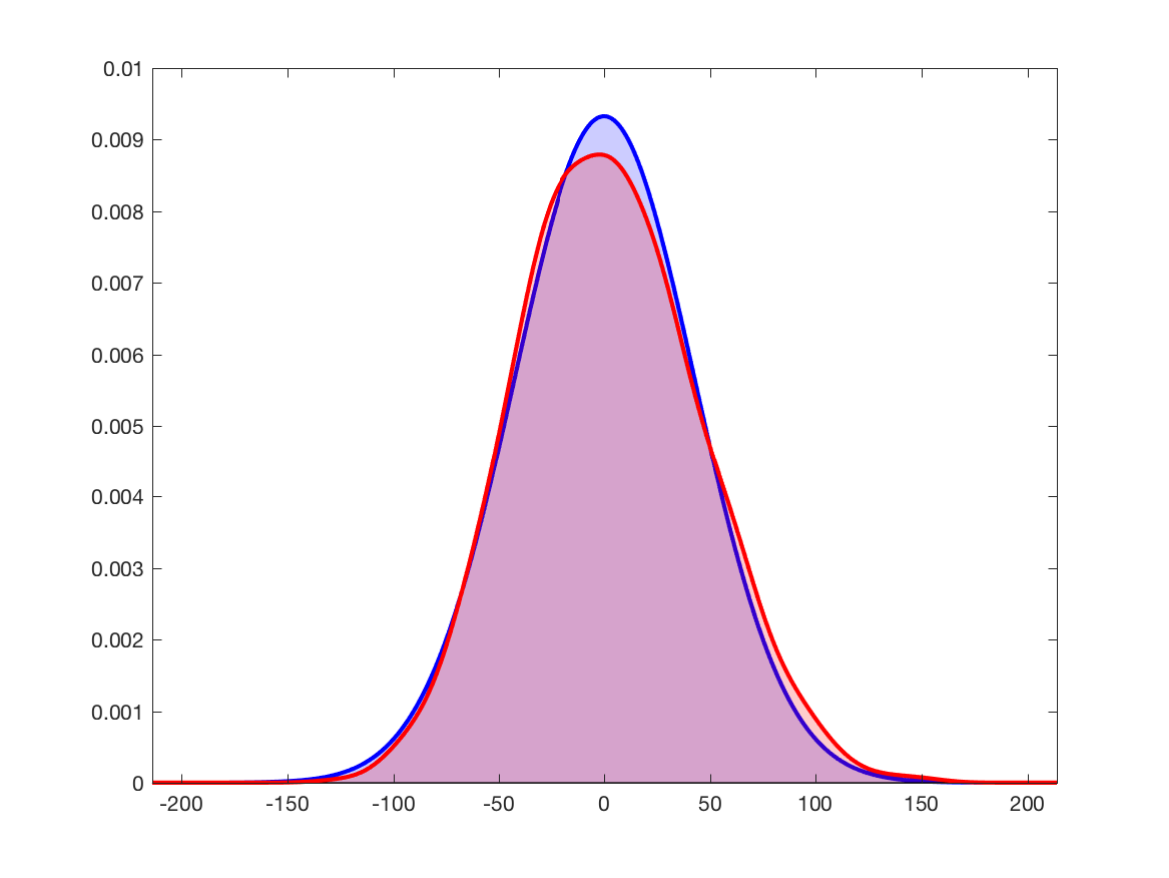} \\

\sidecap{$\varepsilon=100$} &
\includegraphics[width=0.25 \textwidth,height=0.29\textwidth]{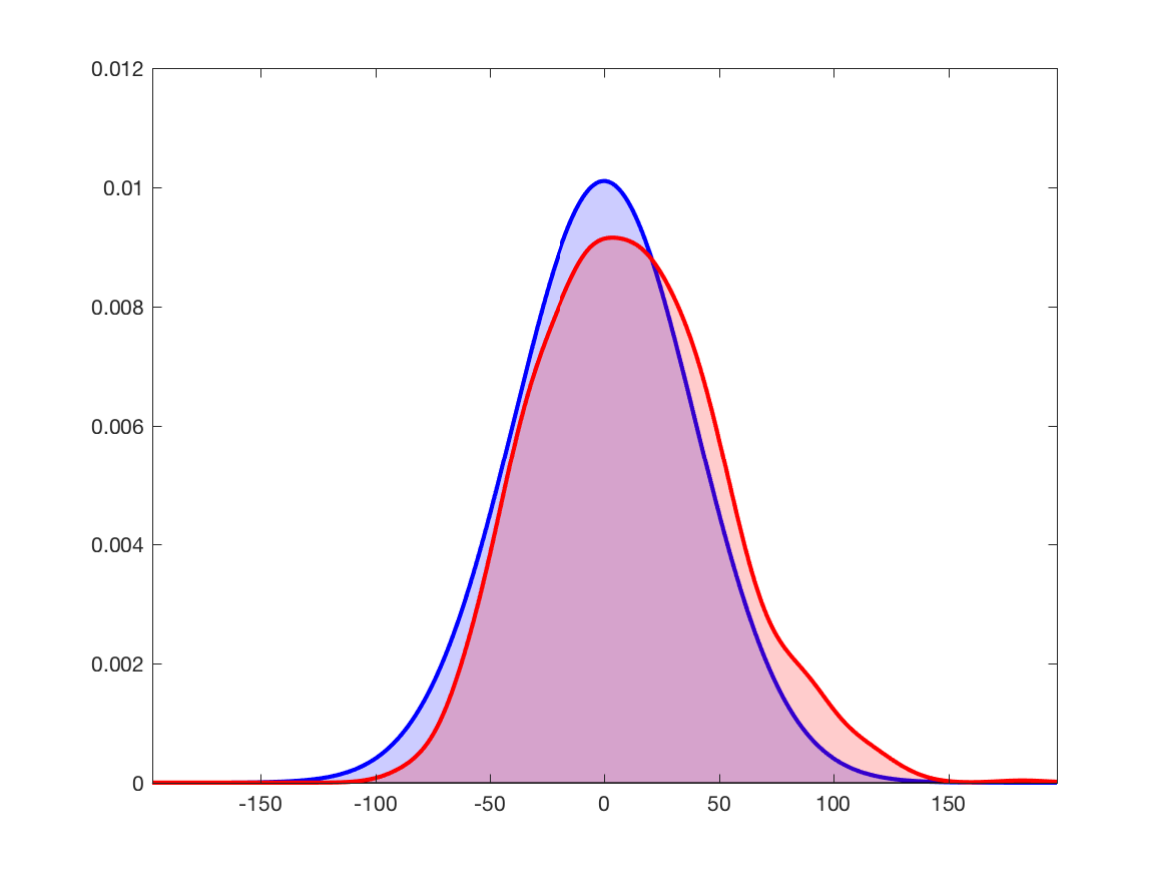} &
\includegraphics[width=0.25 \textwidth,height=0.29\textwidth]{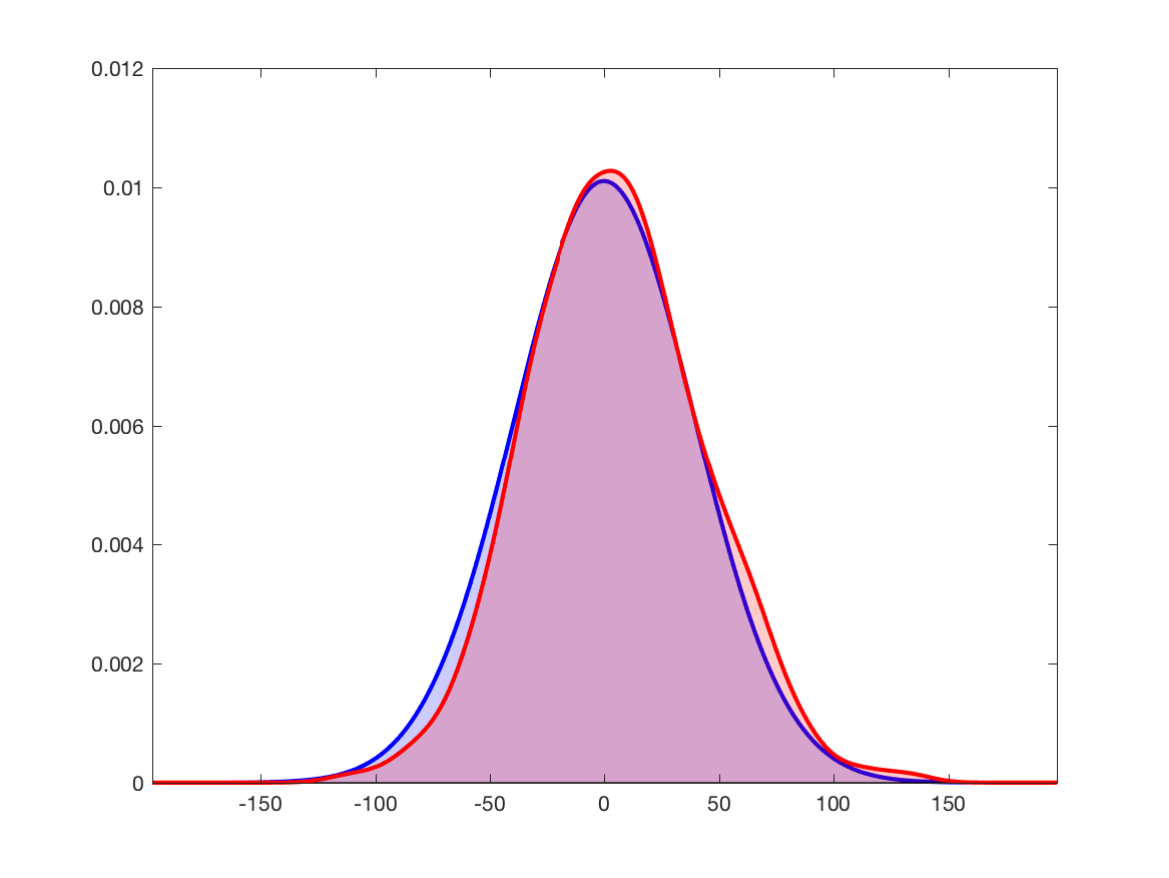} &
\includegraphics[width=0.25 \textwidth,height=0.29\textwidth]{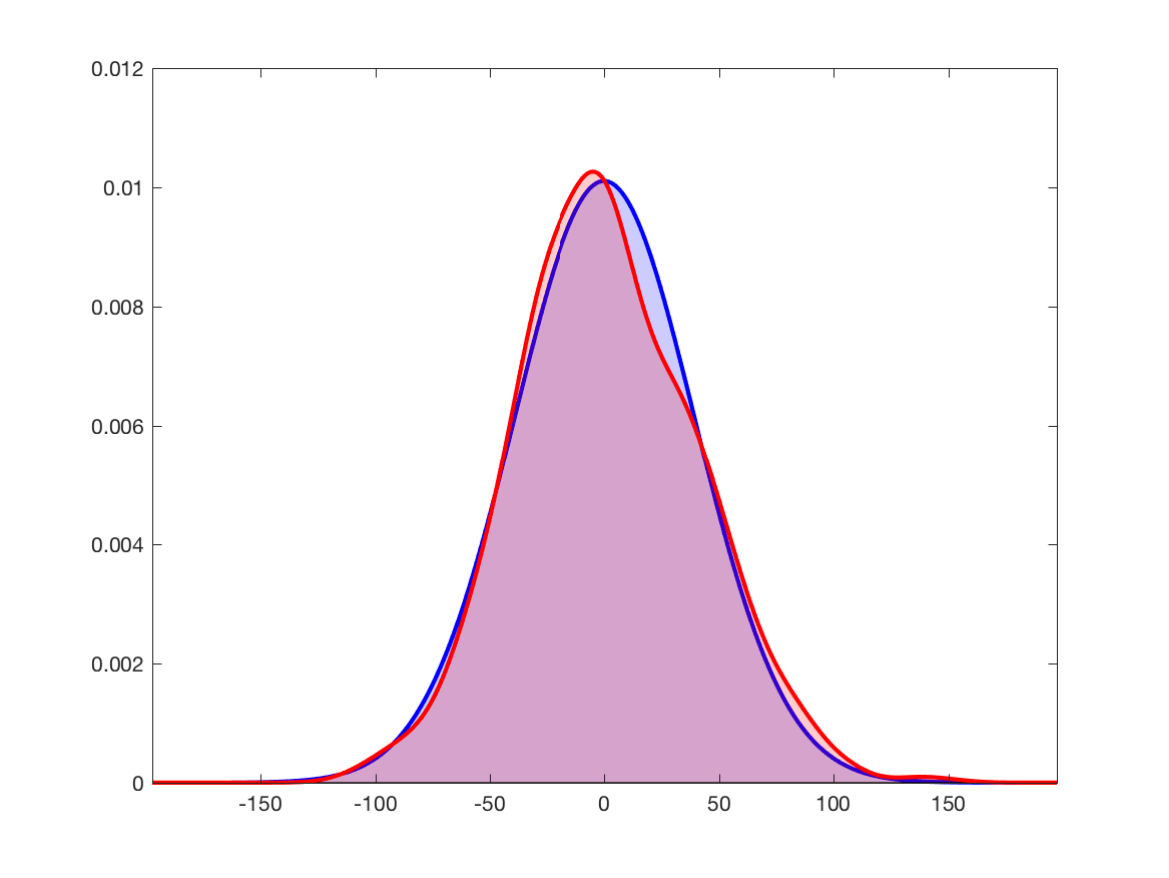} \\
\end{tabular}
\caption{\label{fig:H1one20} Case $a\neq b$ with one sample. Illustration of the convergence in distribution of  empirical Sinkhorn loss for a $20\times 20$ grid, $\varepsilon=10,100$ and $n$ ranging from $10^2$ to $10^4$. Densities in red  (resp.   light  blue) represent the distribution of $\sqrt{n}(\WB_{p,\varepsilon}^p(\hat{a}_n,b)-\WB_{p,\varepsilon}^p(a,b))$ (resp.\ $\langle G,\ualpha_{\varepsilon}^{a,b}-1/2(\ualpha_{\varepsilon}^{a,a}+\vbeta_{\varepsilon}^{a,a})\rangle$).}
\end{figure}

Let us now shed some light on the bootstrap procedure. The results on bootstrap experiments are reported in Figure \ref{fig:example_Boot_H1_one}. From the uniform distribution $a$,  we generate only one random distribution $\hat{a}_n$. The value of the realization $\sqrt{n}(\WB_{p,\varepsilon}^p(\hat{a}_n,b)-\WB_{p,\varepsilon}^p(a,b))$ is represented by the red vertical lines in Figure \ref{fig:example_Boot_H1_one}.  Besides, we generate from $\hat{a}_n$, a sequence of $M = 10^3$ bootstrap samples of random measures denoted by $\hat{a}_n^{\ast}$ (such that $n \hat{a}_n^{\ast}$ follows a multinomial distribution with parameter $\hat{a}_n$). We use again a kernel density estimate (with a data-driven bandwidth) to compare the distribution of $\sqrt{n}(\WB_{p,\varepsilon}^p(\hat{a}_n^{\ast},b)-\WB_{p,\varepsilon}^p(\hat{a}_n,b))$ to the distribution of  $ \sqrt{n}(\WB_{p,\varepsilon}^p(\hat{a}_n,b)-\WB_{p,\varepsilon}^p(a,b))$ displayed in Figure \ref{fig:H1one5} and Figure \ref{fig:H1one20}. The green vertical lines in Figure \ref{fig:example_Boot_H1_one} represent a confidence interval of level $95\%$.  The observation represented by the red vertical line is mostly located within this confidence interval, and the density estimated by bootstrap decently captures the shape of the non-asymptotic distribution of Sinkhorn losses.

\renewcommand{\sidecap}[1]{ {\begin{sideways}\parbox{0.3\textwidth}{\centering #1}\end{sideways}} }
\begin{figure}[ht!]
\centering
\begin{tabular}{cccc}
& $n=10^2$ & $n=10^3$ & $n=10^4$ \\
\sidecap{Grid $5\times 5$} & \includegraphics[width=0.25 \textwidth,height=0.29\textwidth]{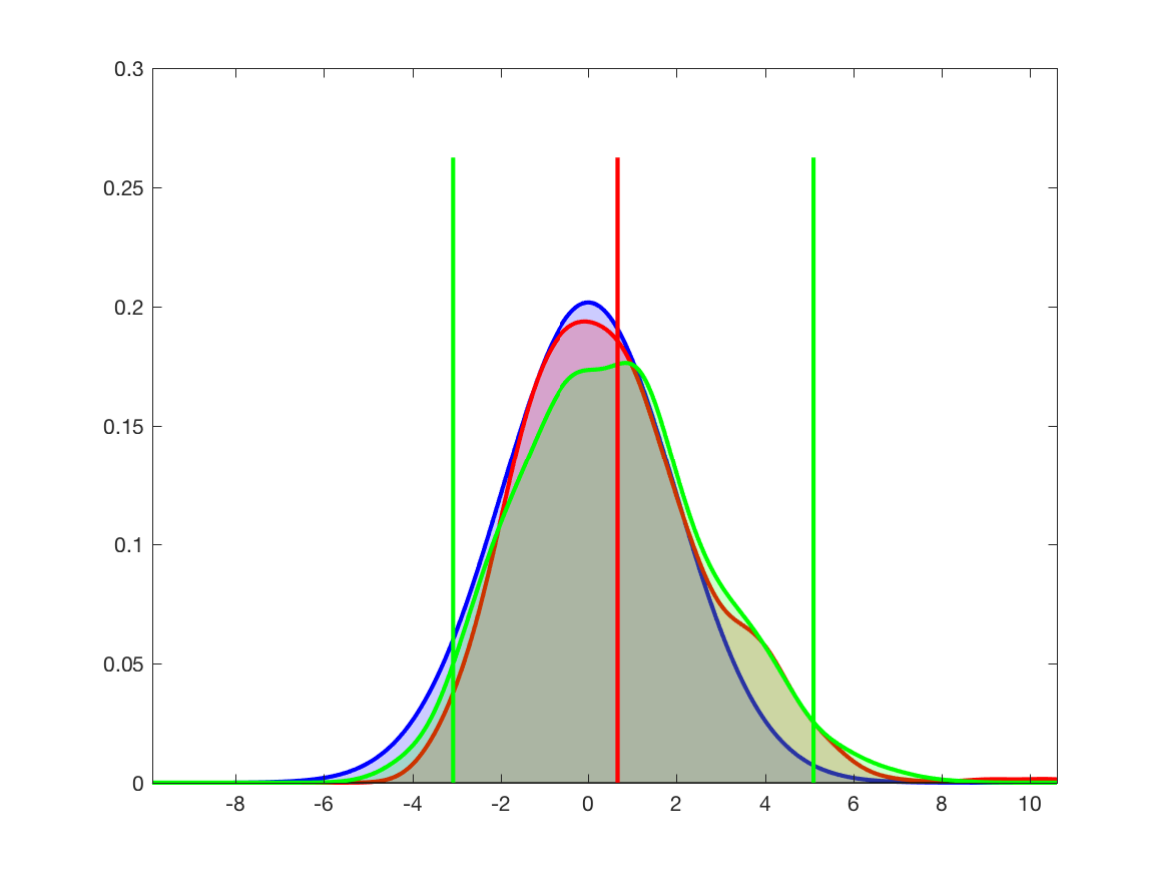} &
\includegraphics[width=0.25 \textwidth,height=0.29\textwidth]{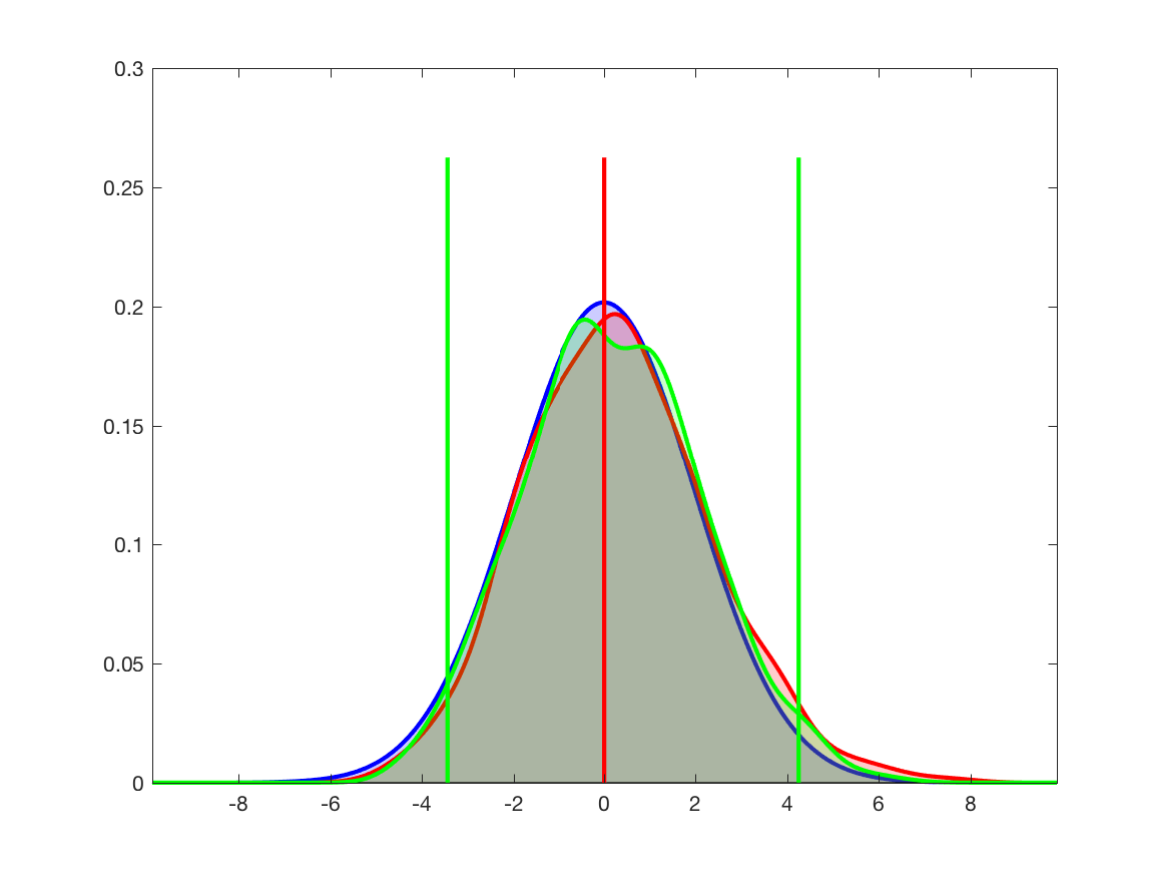} &
\includegraphics[width=0.25 \textwidth,height=0.29\textwidth]{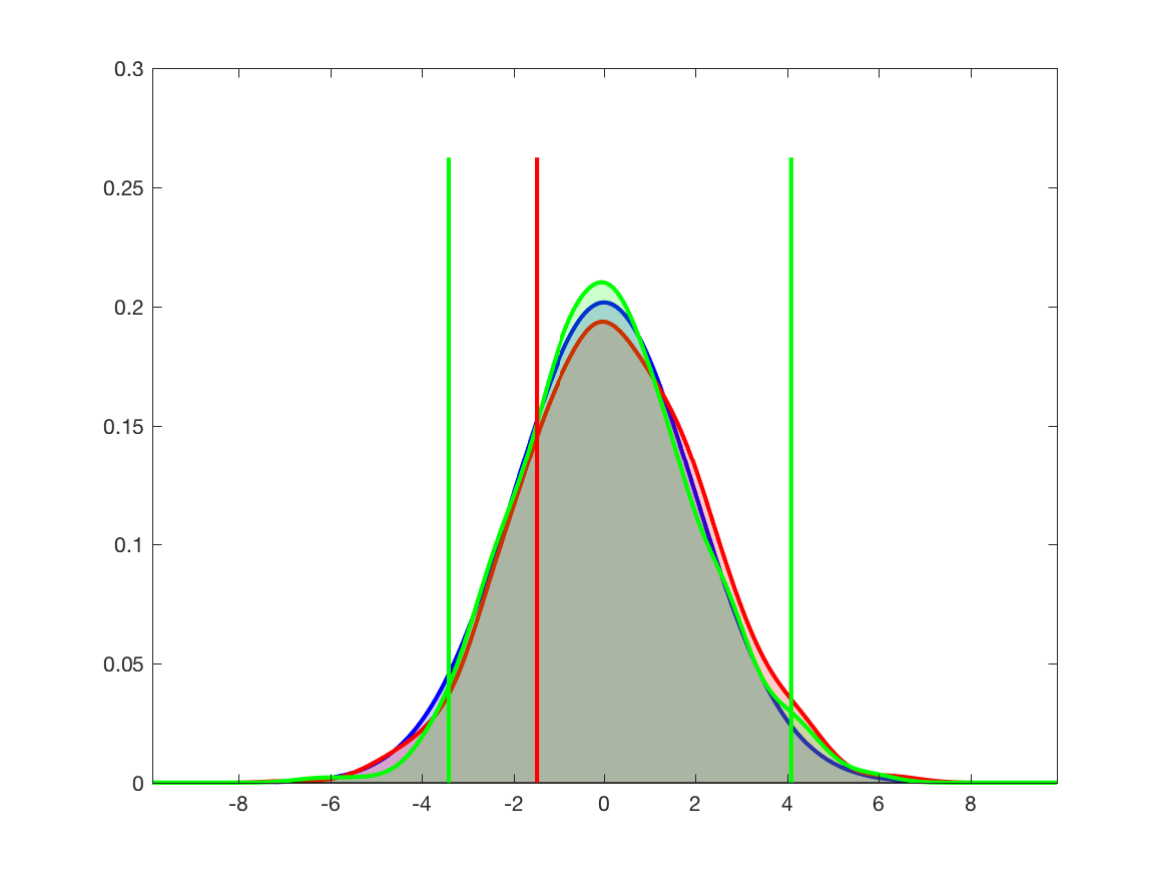} \\
& $n=10^2$ & $n=10^3$ & $n=10^4$ \\
\sidecap{Grid $20\times 20$} & \includegraphics[width=0.25 \textwidth,height=0.29\textwidth]{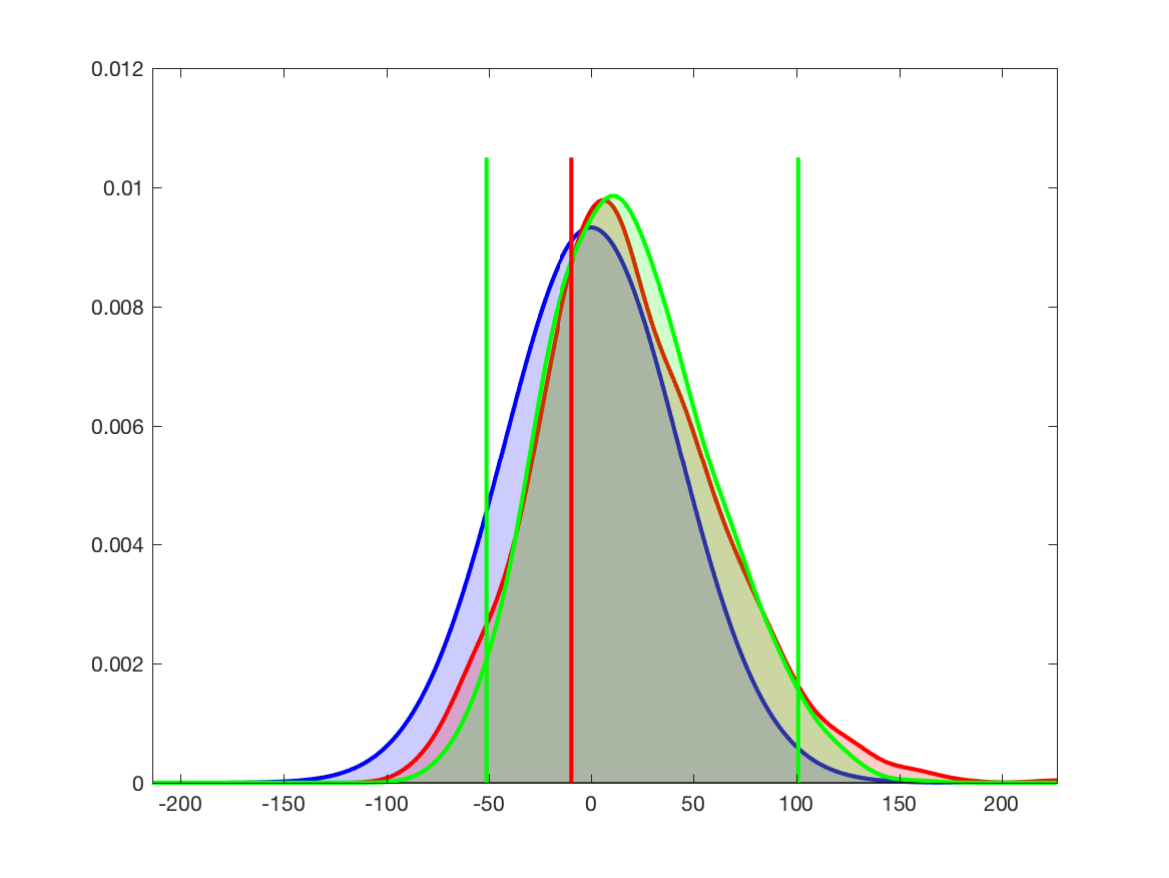} &
\includegraphics[width=0.25 \textwidth,height=0.29\textwidth]{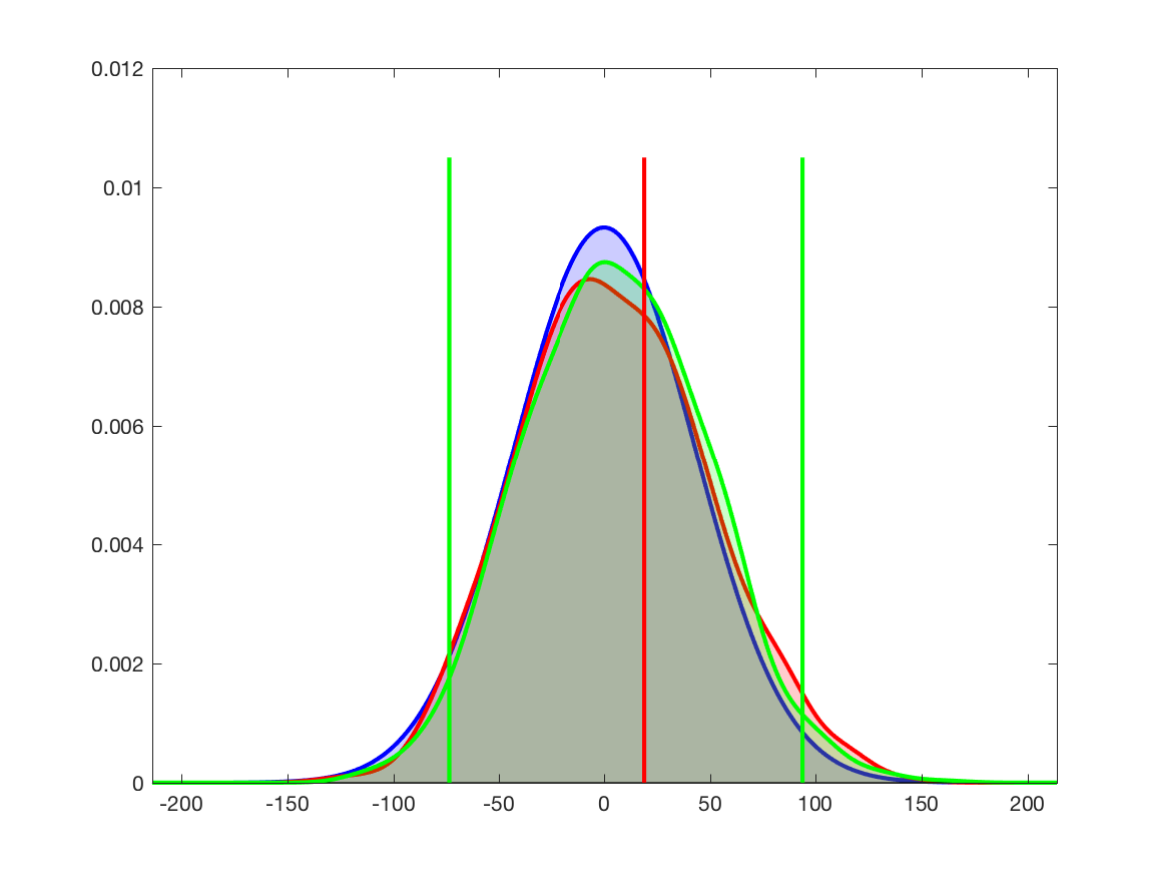} &
\includegraphics[width=0.25 \textwidth,height=0.29\textwidth]{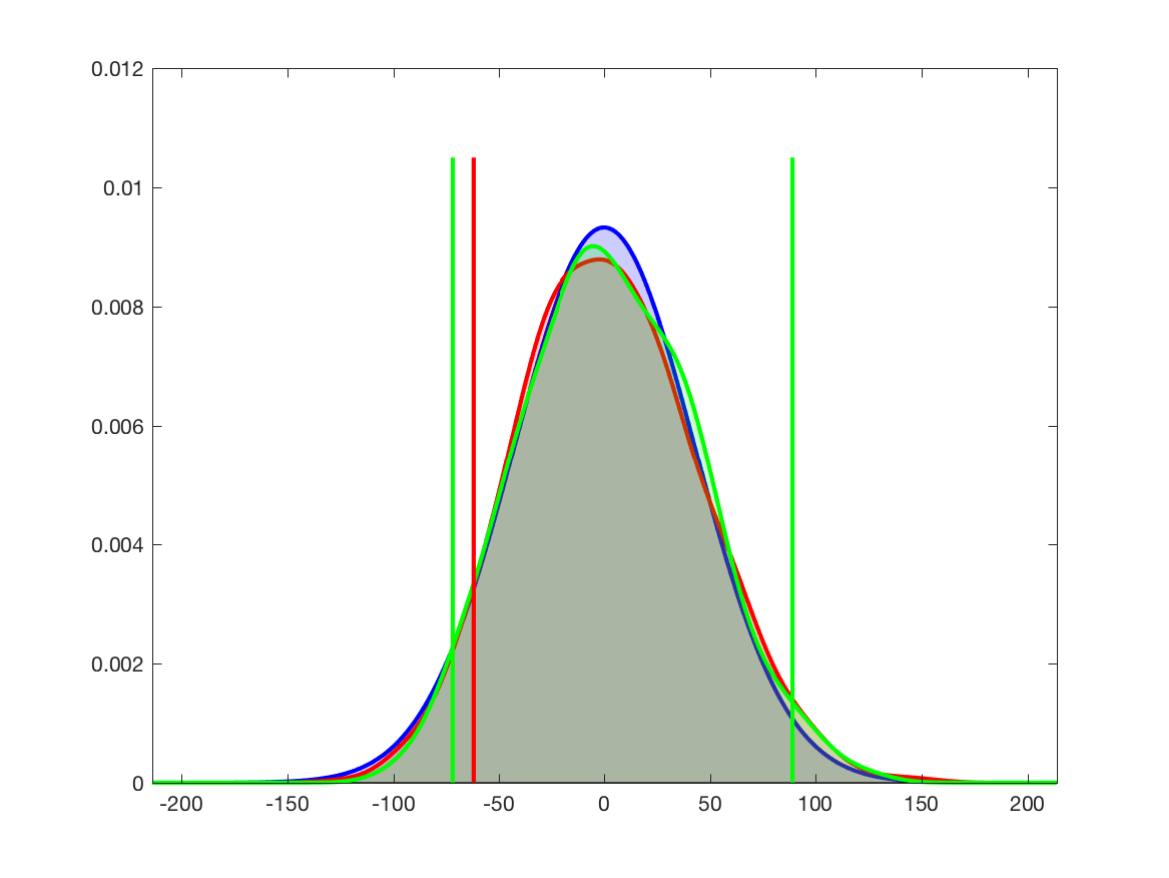}
\end{tabular}
\caption{\label{fig:example_Boot_H1_one}Case $a\neq b$ with one sample. Illustration of the bootstrap with $\varepsilon=10$, grids of size $5\times 5$ and $20\times 20$ to approximate the non-asymptotic distribution of  empirical Sinkhorn losses. Densities in red  (resp. light blue) represent the distribution of $\sqrt{n}(\WB_{p,\varepsilon}^p(\hat{a}_n,b)-\WB_{p,\varepsilon}^p(a,b))$ (resp.\ $\langle G,\ualpha_{\varepsilon}^{a,b}-1/2(\ualpha_{\varepsilon}^{a,a}+\vbeta_{\varepsilon}^{a,a})\rangle$). The green density represents the distribution of the random variable $\sqrt{n}(\WB_{p,\varepsilon}^p(\hat{a}^{\ast}_n,b)-\WB_{p,\varepsilon}^p(\hat{a}_n,b))$ in Theorem \ref{bootstrap}.}
\end{figure}

\subsubsection{Alternative $a\neq b$ - Two samples}
We consider the same  setting as before, excepting that data are now both sampled from distributions $a$ and $b$. Hence, we run $M=10^3$ experiments to obtain a kernel density estimation of the distribution of
$$
\rho_{n,m}(\WB_{p,\varepsilon}^p(\hat{a}_n,\hat{b}_m)-\WB_{p,\varepsilon}^p(a,b)),
$$
that is compared to the density of the Gaussian variable
$$\sqrt{\gamma} \langle G,\ualpha_{\varepsilon}^{a,b}-\frac{1}{2}(\ualpha_{\varepsilon}^{a,a}+ \vbeta_{\varepsilon}^{a,a})\rangle
+\sqrt{1-\gamma} \langle H,\vbeta_{\varepsilon}^{a,b}-\frac{1}{2}(\ualpha_{\varepsilon}^{b,b}+ \vbeta_{\varepsilon}^{b,b})\rangle,$$
for different values of $n$ and $m$.
The results are reported in Figure \ref{fig:H1two}. The convergence does not seem as good as in the one sample case, this must be due to the randomness coming from both $\hat{a}_n$ and $\hat{b}_m$.
\renewcommand{\sidecap}[1]{ {\begin{sideways}\parbox{0.2\textwidth}{\centering #1}\end{sideways}} }
\begin{figure}[ht!]
\centering
\begin{tabular}{cccc}
& \hspace{0.2cm}   $n=m=10^3$ & $n=m=10^4$ & $n=m=10^5$\\
\\
\sidecap{$\varepsilon=10$}& \hspace{0.2cm} \includegraphics[width=0.25 \textwidth,height=0.3\textwidth]{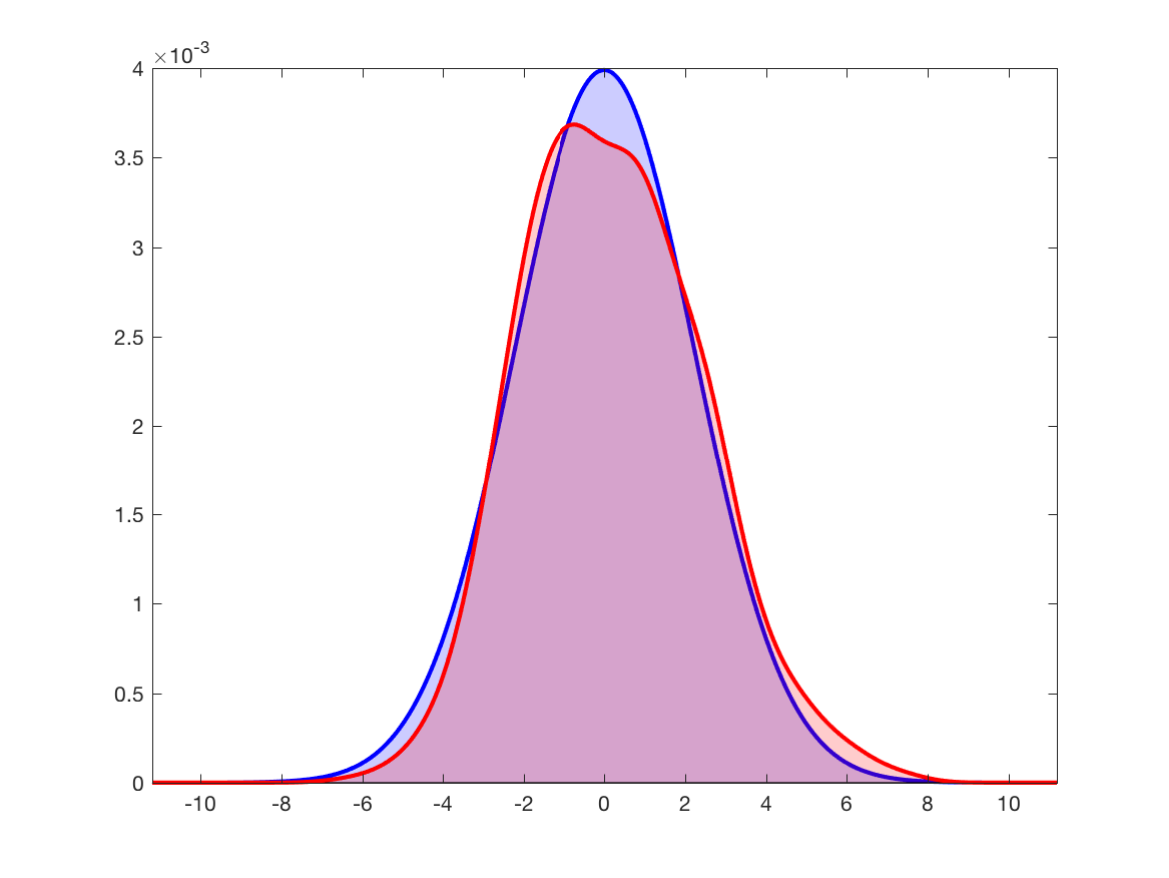} &
\includegraphics[width=0.25 \textwidth,height=0.3\textwidth]{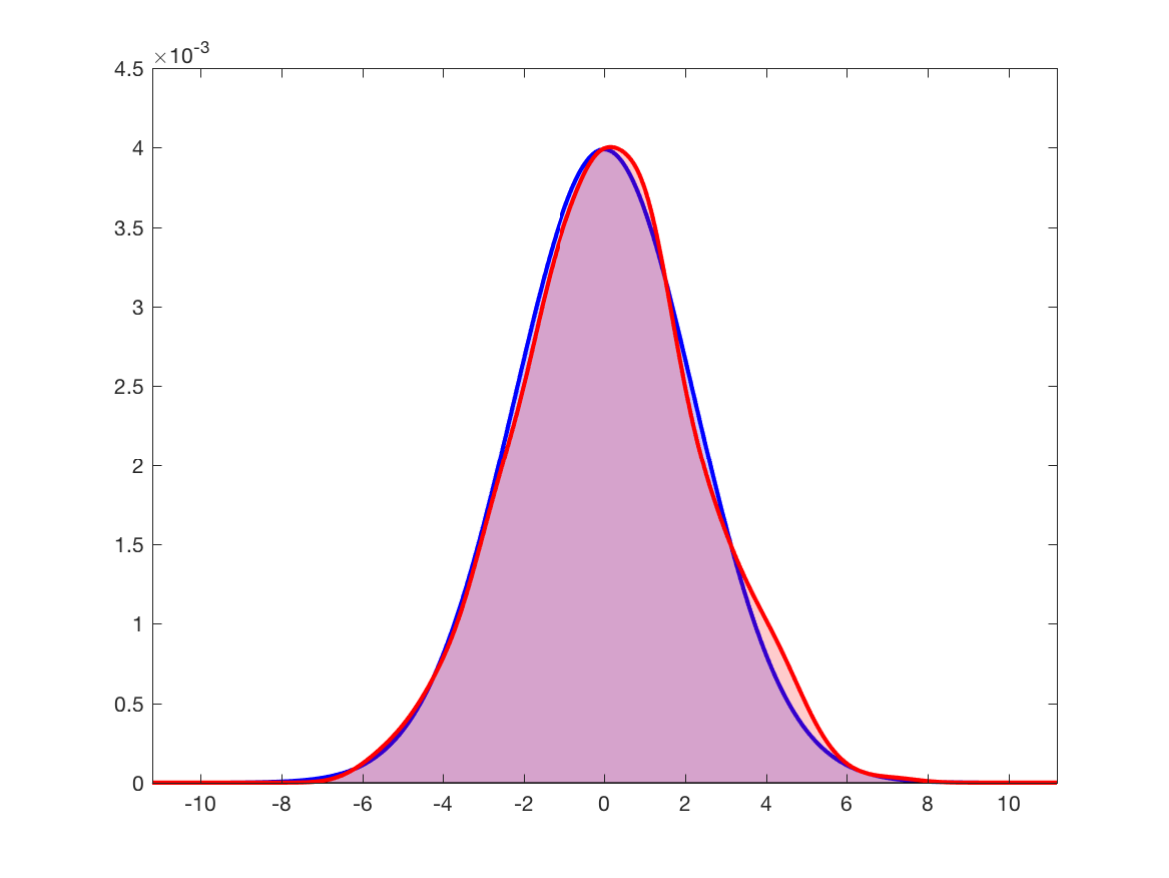} &
\includegraphics[width=0.25 \textwidth,height=0.3\textwidth]{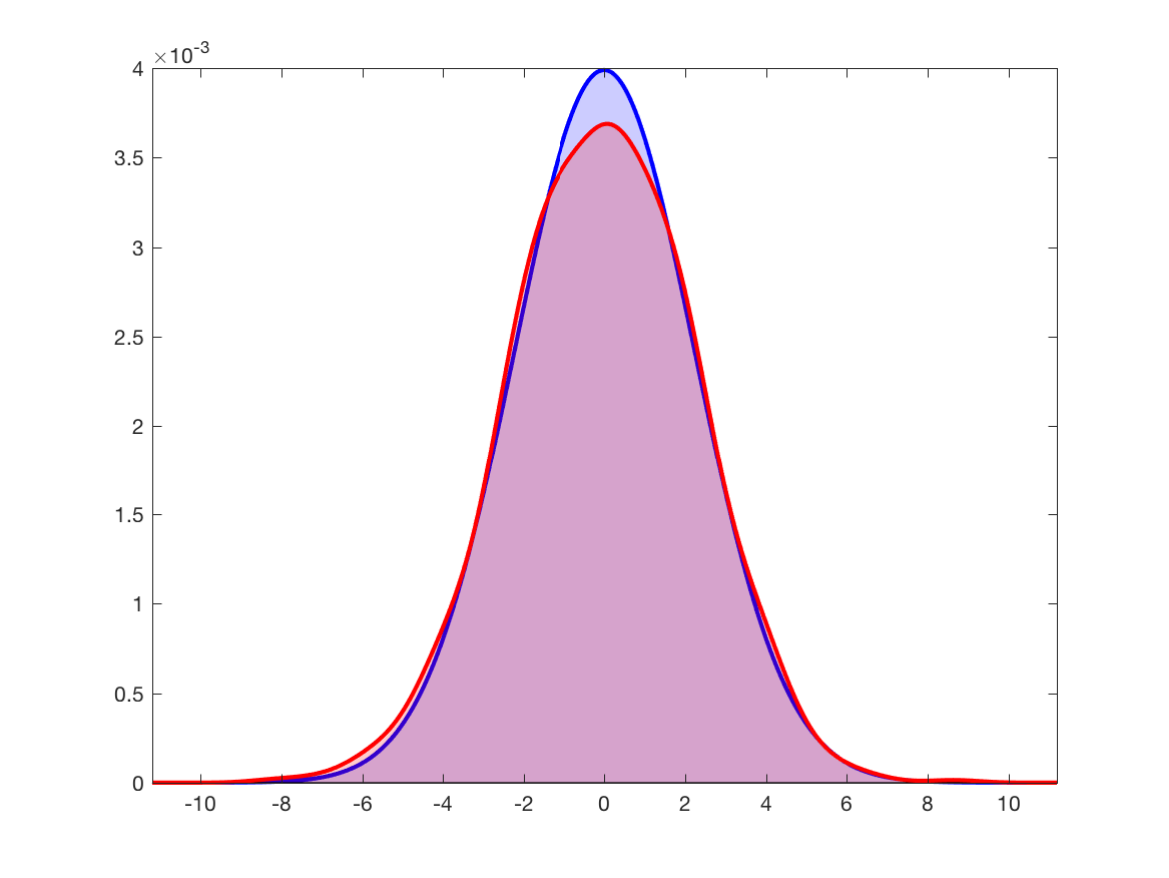} \\
\sidecap{$\varepsilon=100$}& \hspace{0.2cm} \includegraphics[width=0.25 \textwidth,height=0.3\textwidth]{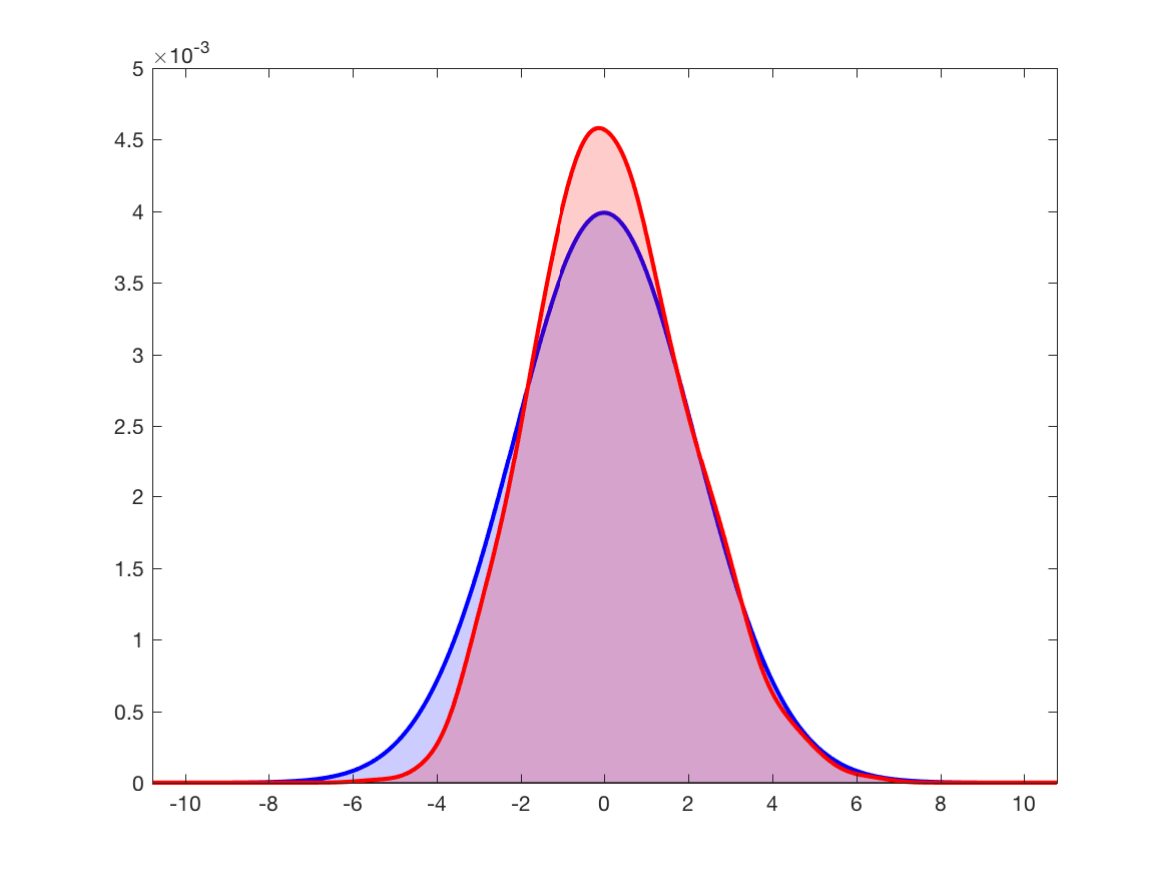} &
\includegraphics[width=0.25 \textwidth,height=0.3\textwidth]{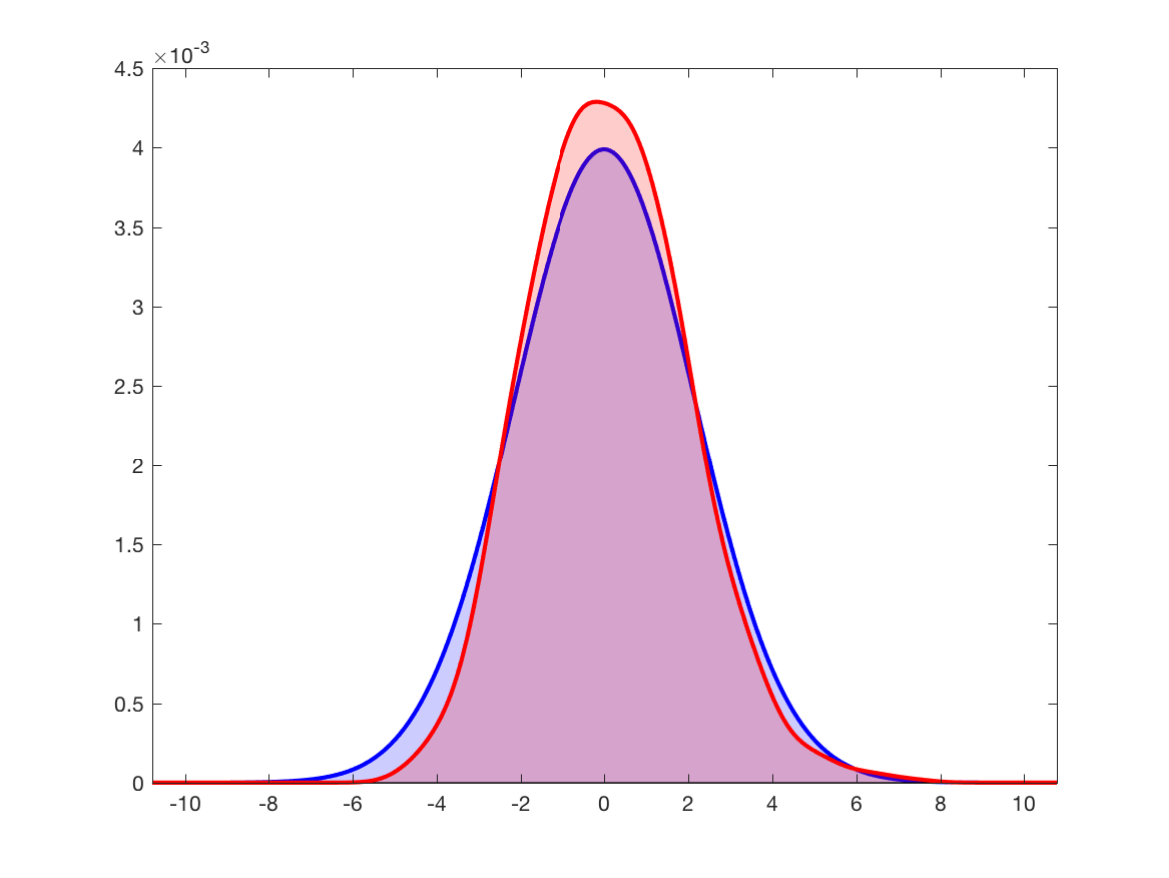} &
\includegraphics[width=0.25 \textwidth,height=0.3\textwidth]{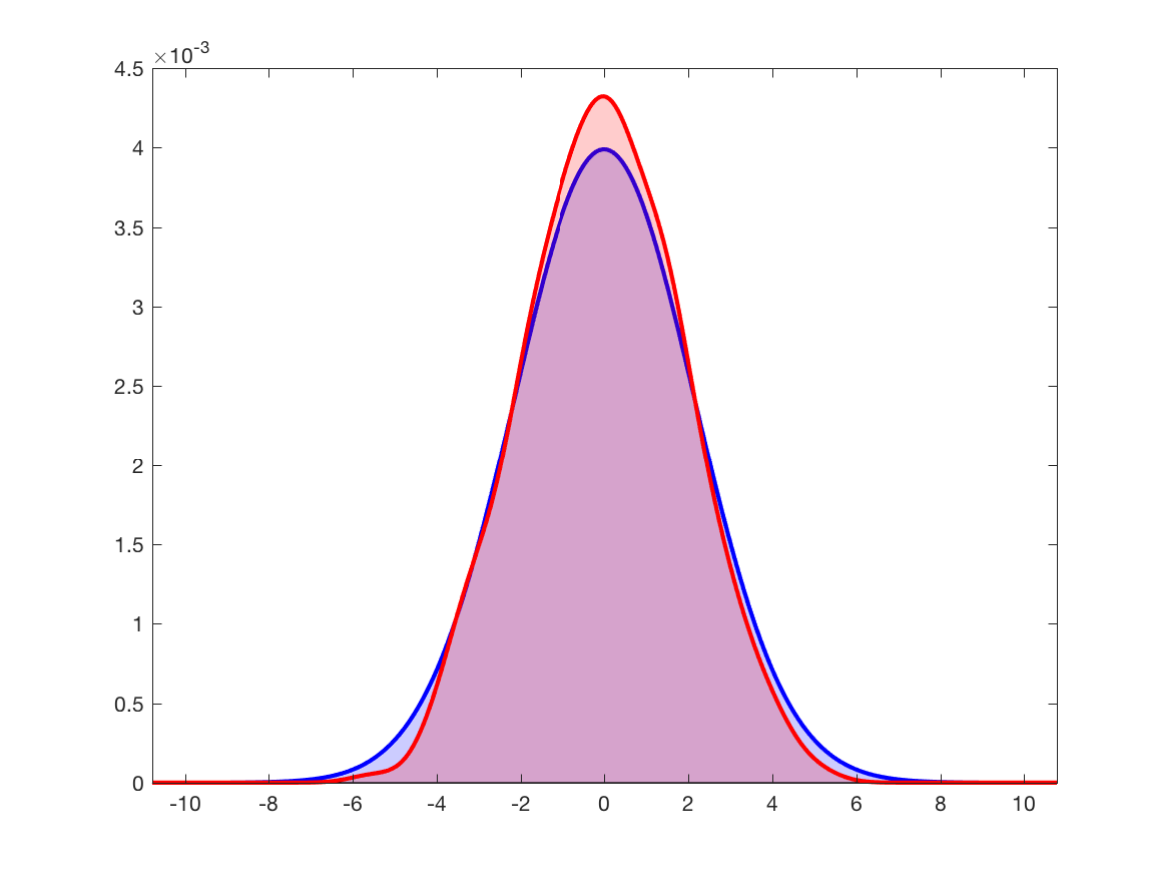} \end{tabular}
\caption{\label{fig:H1two} Case $a\neq b$ with two samples. Illustration of the convergence in distribution of  empirical Sinkhorn loss  for a $5\times 5$ grid, for $\varepsilon=10,100$, $n=m$ and $n$ ranging from $10^3$ to $10^5$. Densities in red  (resp.\ blue) represent the distribution of $\rho_{n,m}(\WB_{p,\varepsilon}^p(\hat{a}_n,\hat{b}_m)-\WB_{p,\varepsilon}^p(a,b))$ (resp.\ $\sqrt{\gamma}  \langle G,\ualpha_{\varepsilon}^{a,b}-\frac{1}{2}(\ualpha_{\varepsilon}^{a,a}+ \vbeta_{\varepsilon}^{a,a})\rangle
+ \sqrt{1-\gamma} \langle H,\vbeta_{\varepsilon}^{a,b}-\frac{1}{2}(\ualpha_{\varepsilon}^{b,b}+ \vbeta_{\varepsilon}^{b,b})\rangle$ with $\gamma = 1/2$).}
\end{figure}

We also report in Figure \ref{fig:example_Boot_H1_two} results on the consistency of the bootstrap procedure   under the hypothesis $H_1$ with two samples. From the distributions $a$ and $b$,  we generate two random distributions $\hat{a}_n$ and $\hat{b}_m$. The value of the realization  $\rho_{n,m}(\WB_{p,\varepsilon}^p(\hat{a}_n,\hat{b}_m)-\WB_{p,\varepsilon}^p(a,b))$ is represented by the red vertical lines in Figure \ref{fig:example_Boot_H1_two}. Then, we generate from $\hat{a}_n$ and $\hat{b}_m$, two sequences of $M = 10^3$ bootstrap samples of random measures denoted by $\hat{a}_n^{\ast}$ and $\hat{b}_m^{\ast}$. We use again a kernel density estimate (with a data-driven bandwith) to compare the green distribution of $\rho_{n,m}(\WB_{p,\varepsilon}^p(\hat{a}^{\ast}_n,\hat{b}_m^{\ast})-\WB_{p,\varepsilon}^p(\hat{a}_n,\hat{b}_m))$ to the red distribution of  $\rho_{n,m}(\WB_{p,\varepsilon}^p(\hat{a}_n,\hat{b}_m)-\WB_{p,\varepsilon}^p(a,b))$ displayed in Figure \ref{fig:example_Boot_H1_two}. The green vertical lines in Figure \ref{fig:example_Boot_H1_two} represent a  confidence interval of level $95\%$. We can draw the same conclusion as in the one sample case. All these experiments thus perfectly illustrate the Theorem \ref{Th_CLT_loss_H1}. 

\renewcommand{\sidecap}[1]{ {\begin{sideways}\parbox{0.3\textwidth}{\centering #1}\end{sideways}} }
\begin{figure}[ht!]
\centering
\begin{tabular}{cccc}
& $n=m=10^3$ & $n=m=10^4$ & $n=m=10^5$ \\
\sidecap{$\varepsilon=10$}& \includegraphics[width=0.25 \textwidth,height=0.32\textwidth]{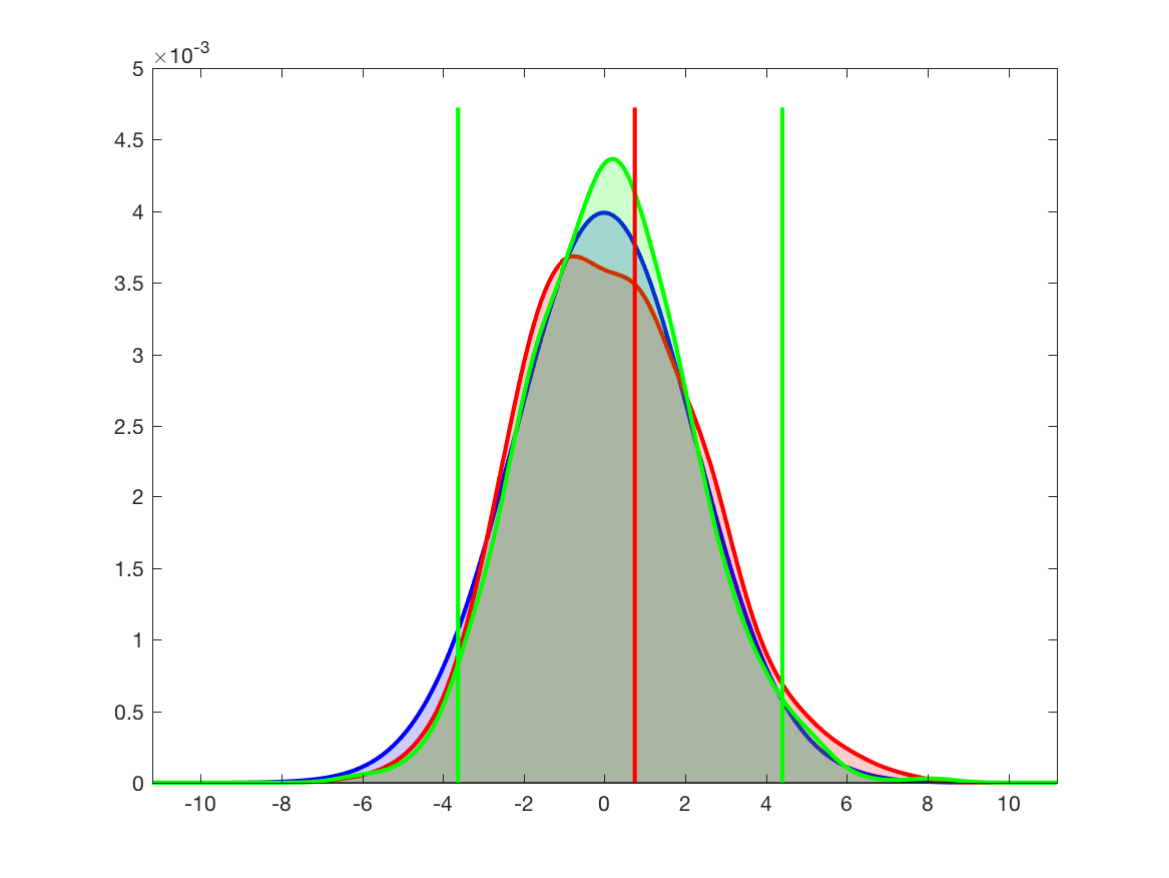} &
\includegraphics[width=0.25 \textwidth,height=0.32\textwidth]{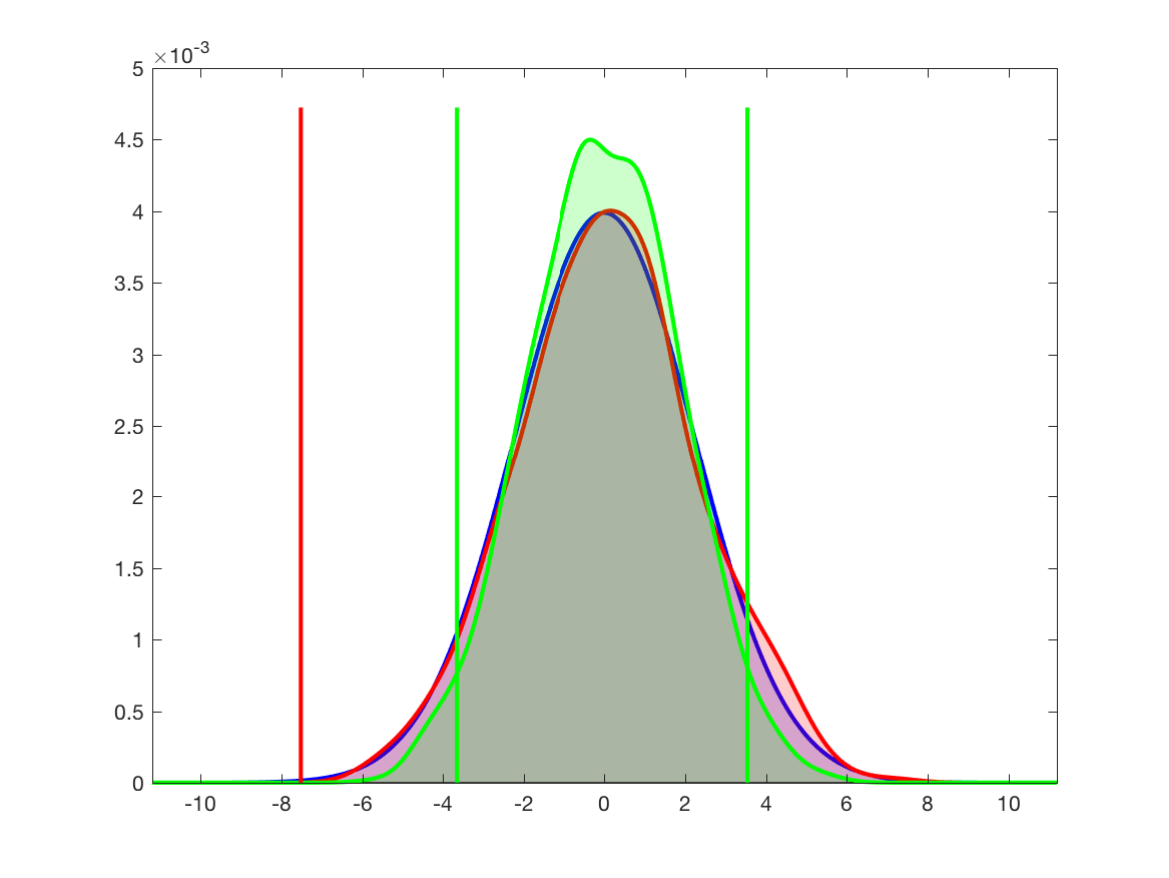} &
\includegraphics[width=0.25 \textwidth,height=0.32\textwidth]{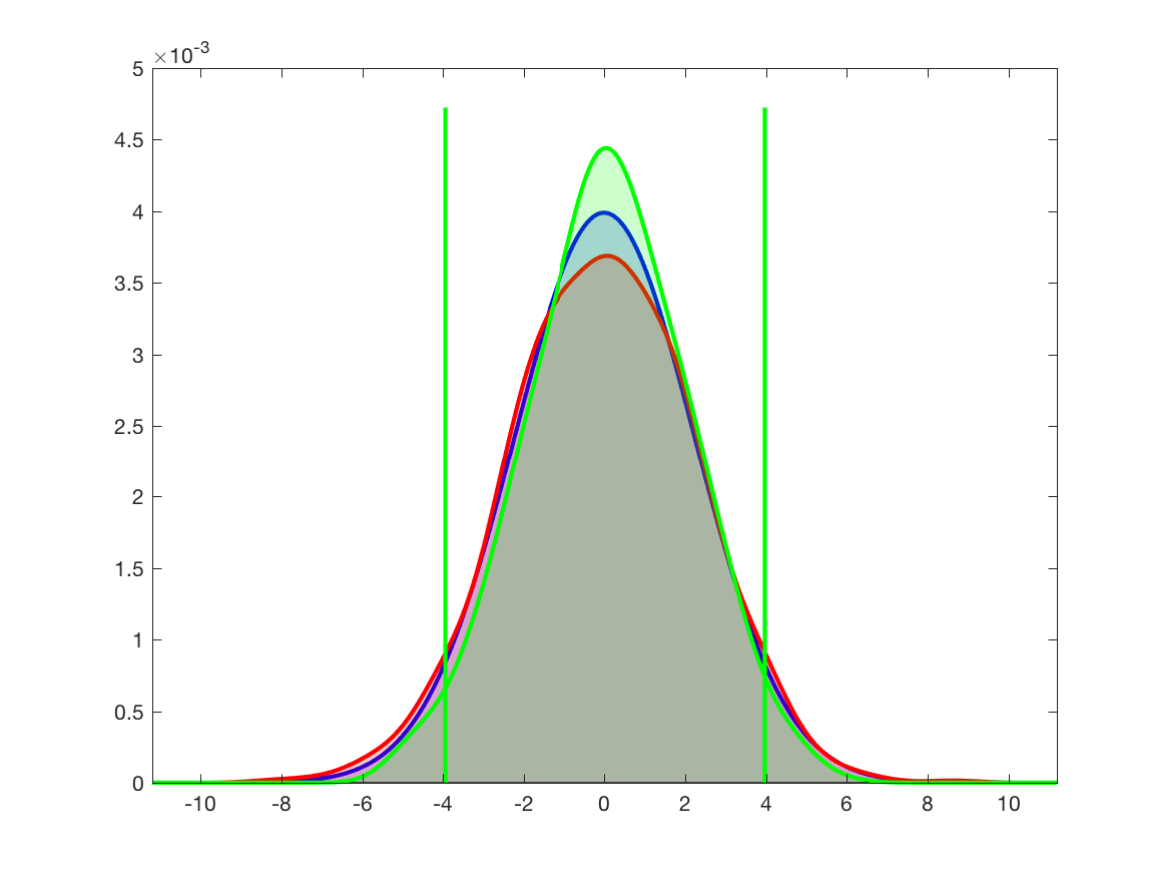} \\
\sidecap{$\varepsilon=100$}& \includegraphics[width=0.25 \textwidth,height=0.32\textwidth]{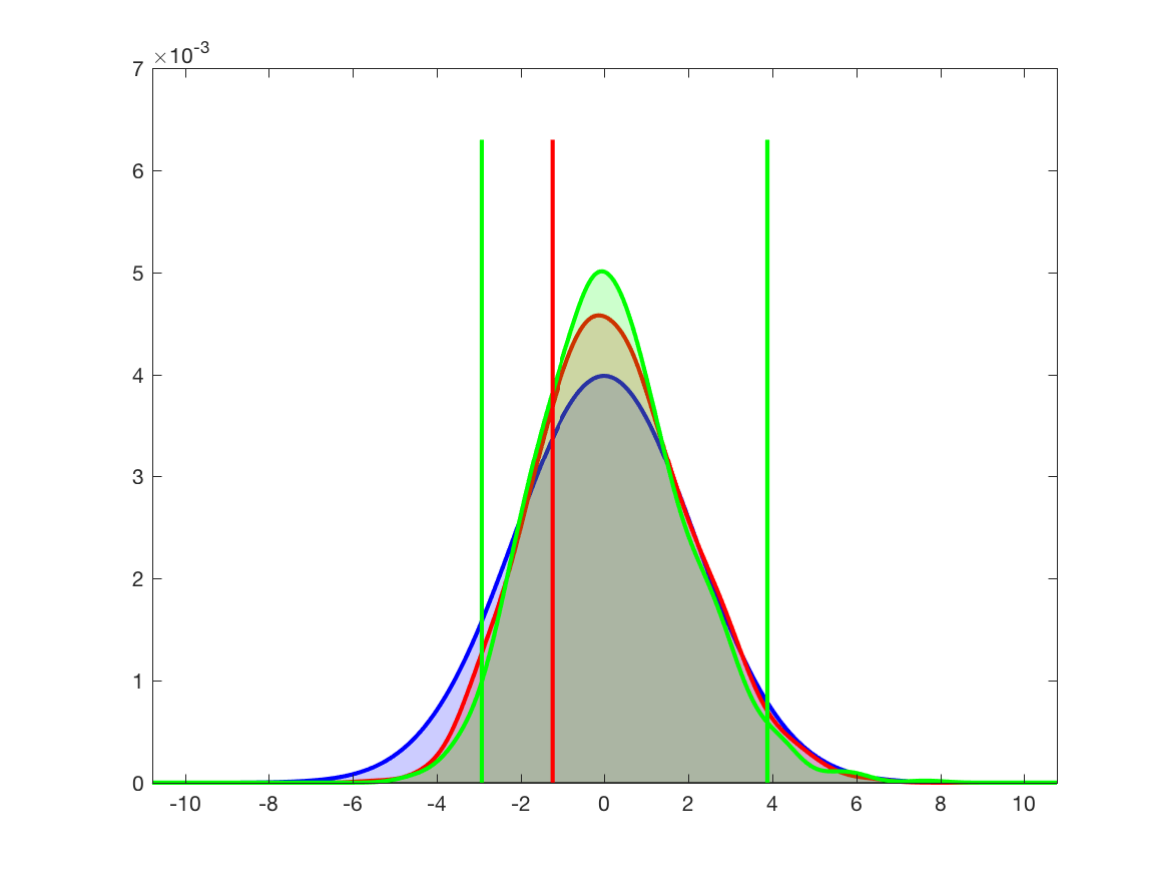} &
\includegraphics[width=0.25 \textwidth,height=0.32\textwidth]{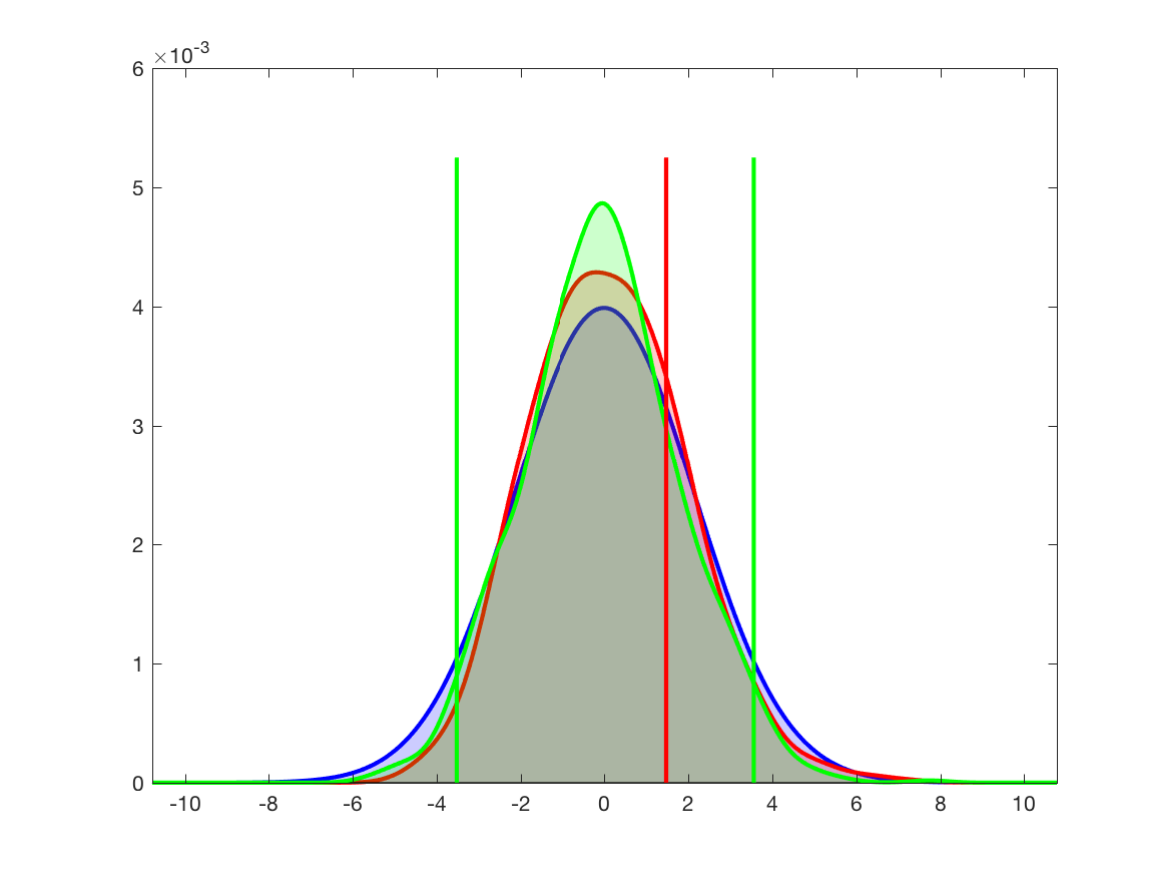} &
\includegraphics[width=0.25 \textwidth,height=0.32\textwidth]{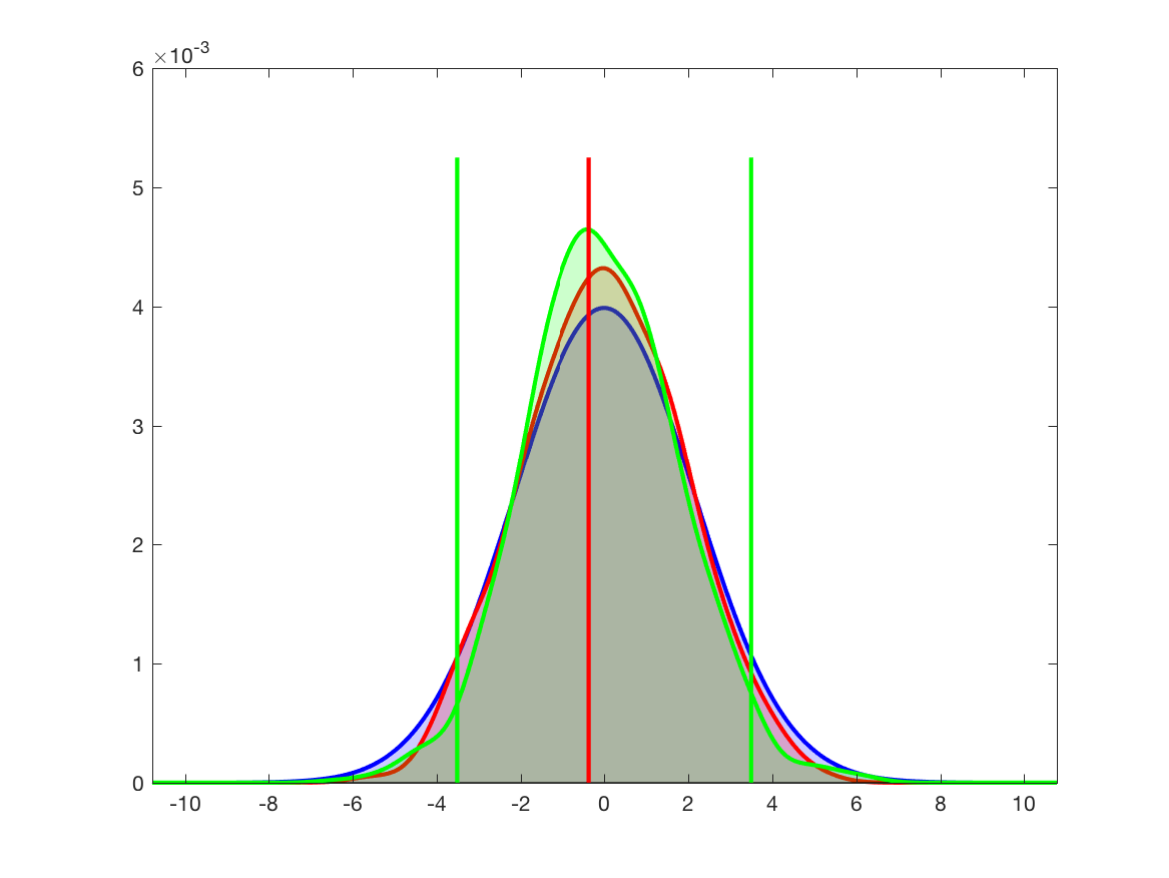}
\end{tabular}
\caption{\label{fig:example_Boot_H1_two}Case $a\neq b$ with two samples. Illustration of the bootstrap  with $\varepsilon=10$ for the grid of size $5\times 5$ and $\varepsilon=100$ for the grid $20\times 20$ to approximate the non-asymptotic distribution of empirical Sinkhorn divergences. Densities in red  (resp.\ blue) represent the distribution of $\rho_{n,m}(\WB_{p,\varepsilon}^p(\hat{a}_n,\hat{b}_m)-\WB_{p,\varepsilon}^p(a,b))$ (resp.\ $\sqrt{\gamma}  \langle G,\ualpha_{\varepsilon}^{a,b}-\frac{1}{2}(\ualpha_{\varepsilon}^{a,a}+ \vbeta_{\varepsilon}^{a,a})\rangle
+ \sqrt{1-\gamma} \langle H,\vbeta_{\varepsilon}^{a,b}-\frac{1}{2}(\ualpha_{\varepsilon}^{b,b}+ \vbeta_{\varepsilon}^{b,b})\rangle$). The green density is the distribution of the random variable $\rho_{n,m}(\WB_{p,\varepsilon}^p(\hat{a}^{\ast}_n,\hat{b}_m^{\ast})-\WB_{p,\varepsilon}^p(\hat{a}_n,\hat{b}_m))$ in Theorem \ref{bootstrap}.} 
\end{figure}

\subsubsection{Hypothesis $a=b$ - One sample.}

As in the previous cases, we consider $a$ to be the uniform distribution on a square grid. We recall that the distributional limit in the right hand side of \eqref{null_one}  is the following mixture of random variables with chi-squared distribution of degree $1$
$$
\frac{1}{2}\sum_{i=1}^N \lambda_i\chi_{\scriptscriptstyle{i}}^2(1)\ \mbox{for} \ \lambda_1,\ldots,\lambda_N \ \mbox{the eigenvalues of} \ \Sigma(a)^{1/2}\partial_{\scriptscriptstyle{11}}^2\WB_{p,\varepsilon}^p(a,a) \Sigma(a)^{1/2}.
$$
It appears to be difficult to compute the density of this distributional limit or to draw samples from it, since computing the Hessian matrix $\partial_{\scriptscriptstyle{11}}^2\WB_{p,\varepsilon}^p(a,a)$ is a delicate task. We thus leave this problem open for future work, and  only rely on the non-asymptotic distribution of $n \WB_{p,\varepsilon}^p(\hat{a}_n,a)$. This justifies the use of the bootstrap procedure described in Section \ref{sec:bootstrap}. We  display the bootstrap statistic in Figure \ref{fig:H0one}. The shape of the non-asymptotic density of $n\WB_{p,\varepsilon}^p(\hat{a}_n,a)$ (red curves in Figure \ref{fig:H0one}) looks chi-squared distributed. In particular, it only takes  positive values. The bootstrap distribution in green also recovers the most significant mass location of the red density.
\renewcommand{\sidecap}[1]{ {\begin{sideways}\parbox{0.2\textwidth}{\centering #1}\end{sideways}} }
\begin{figure}[ht!]
\centering
\begin{tabular}{cccc|ccc}
& $n=10^3$ & $n=10^4$ & $n=10^5$ & \hspace{0.2cm}  $n=10^3$ & $n=10^4$ & $n=10^5$ \\
\sidecap{$\varepsilon=1$} & \includegraphics[width=0.15\textwidth,height=0.22\textwidth]{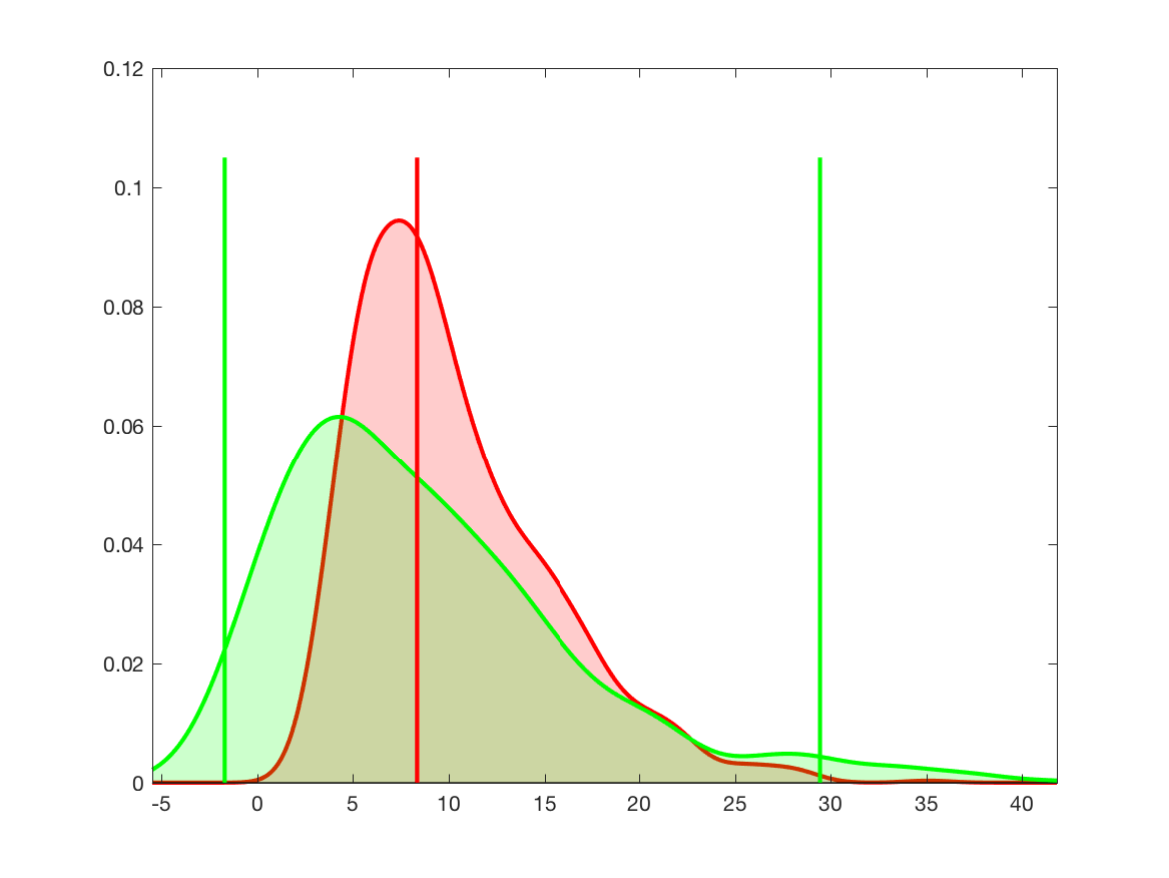} &
\includegraphics[width=0.15\textwidth,height=0.22\textwidth]{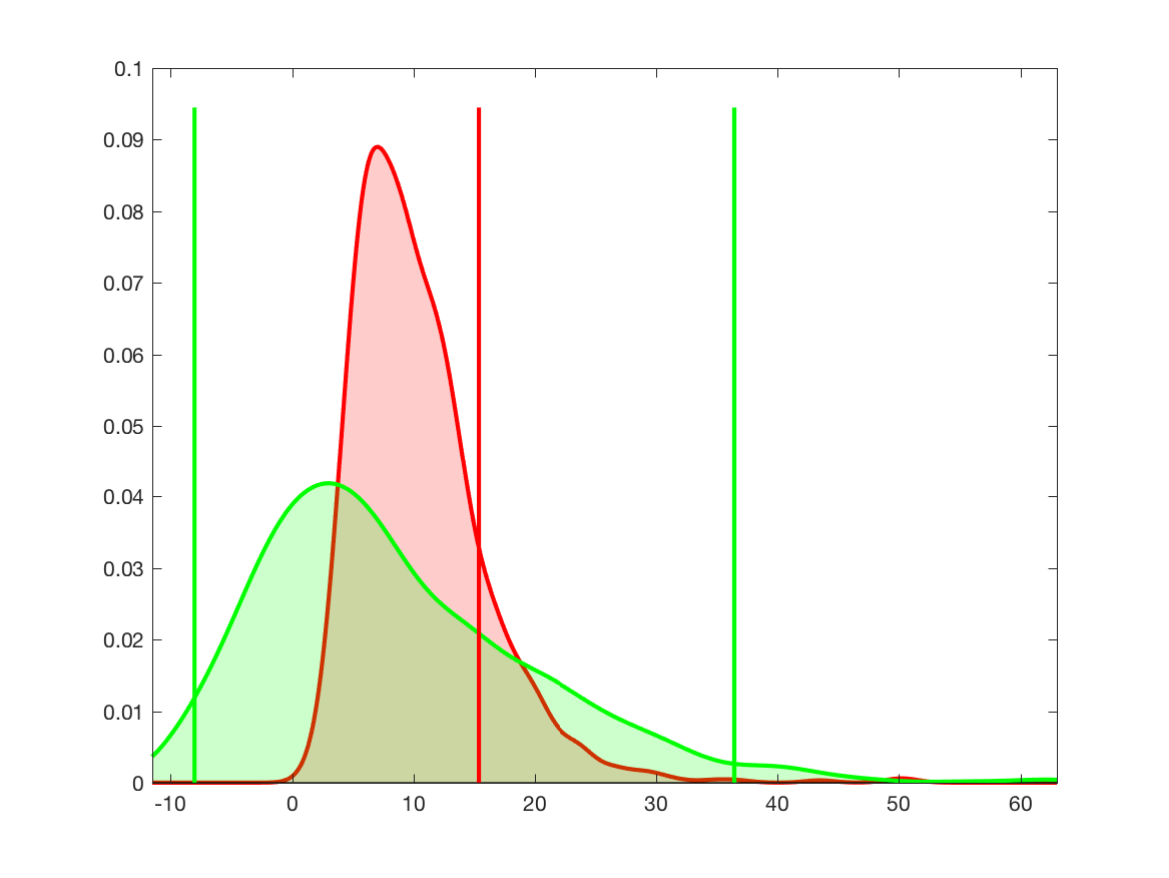} &
\includegraphics[width=0.15\textwidth,height=0.22\textwidth]{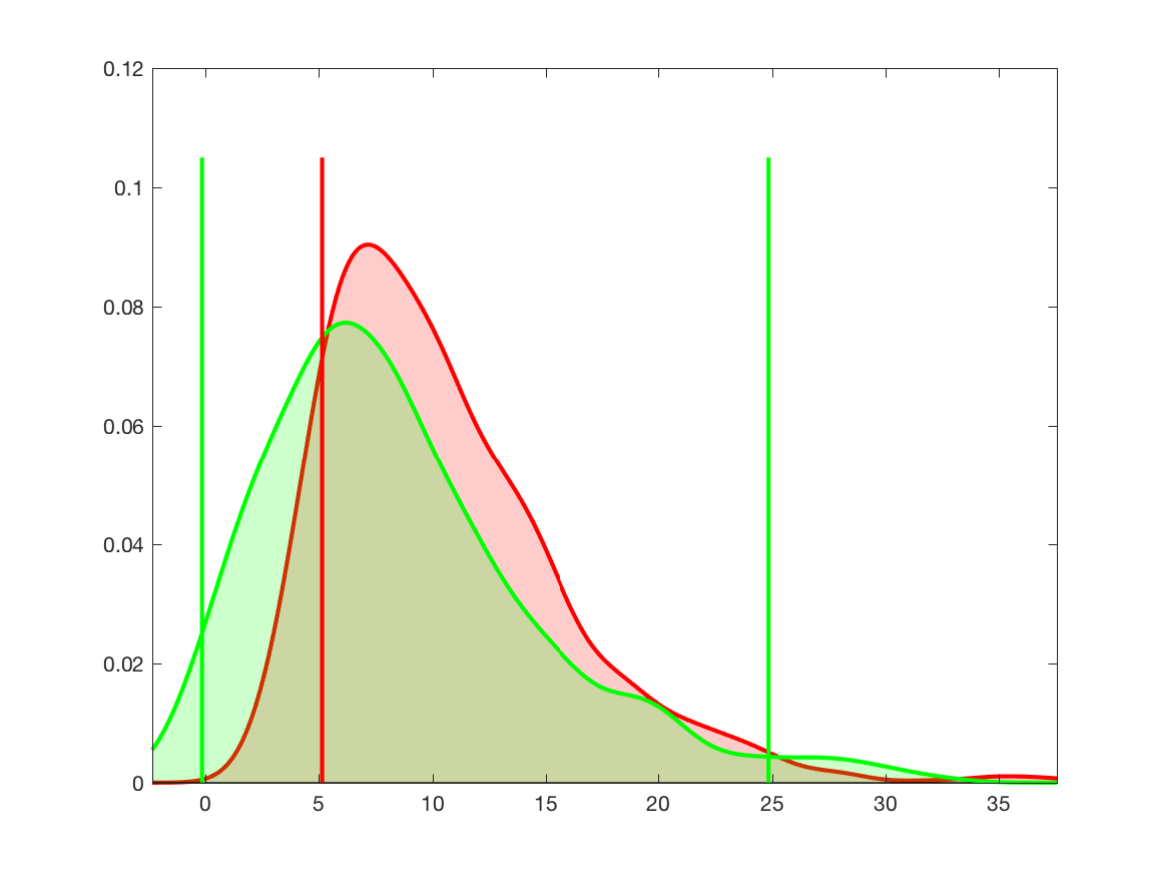} &
\hspace{0.2cm} \includegraphics[width=0.15\textwidth,height=0.22\textwidth]{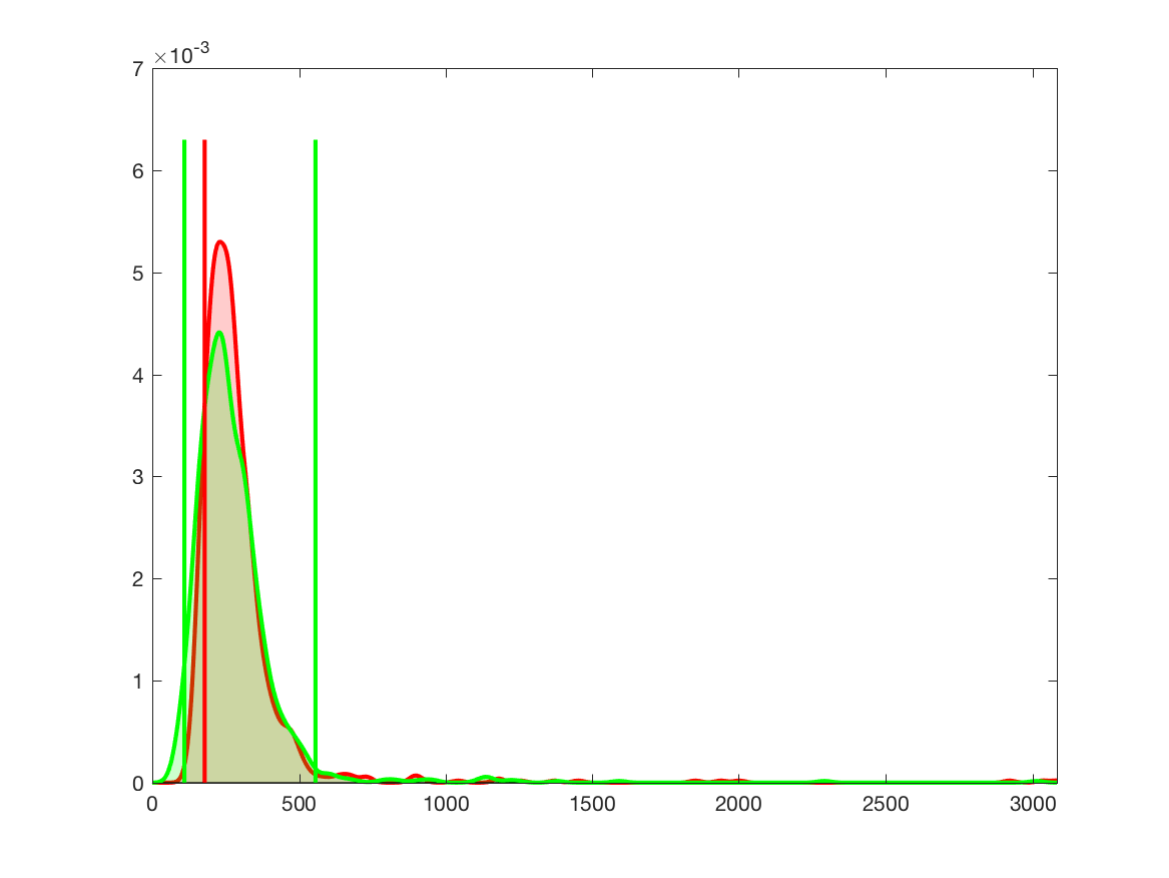} &
\includegraphics[width=0.15\textwidth,height=0.22\textwidth]{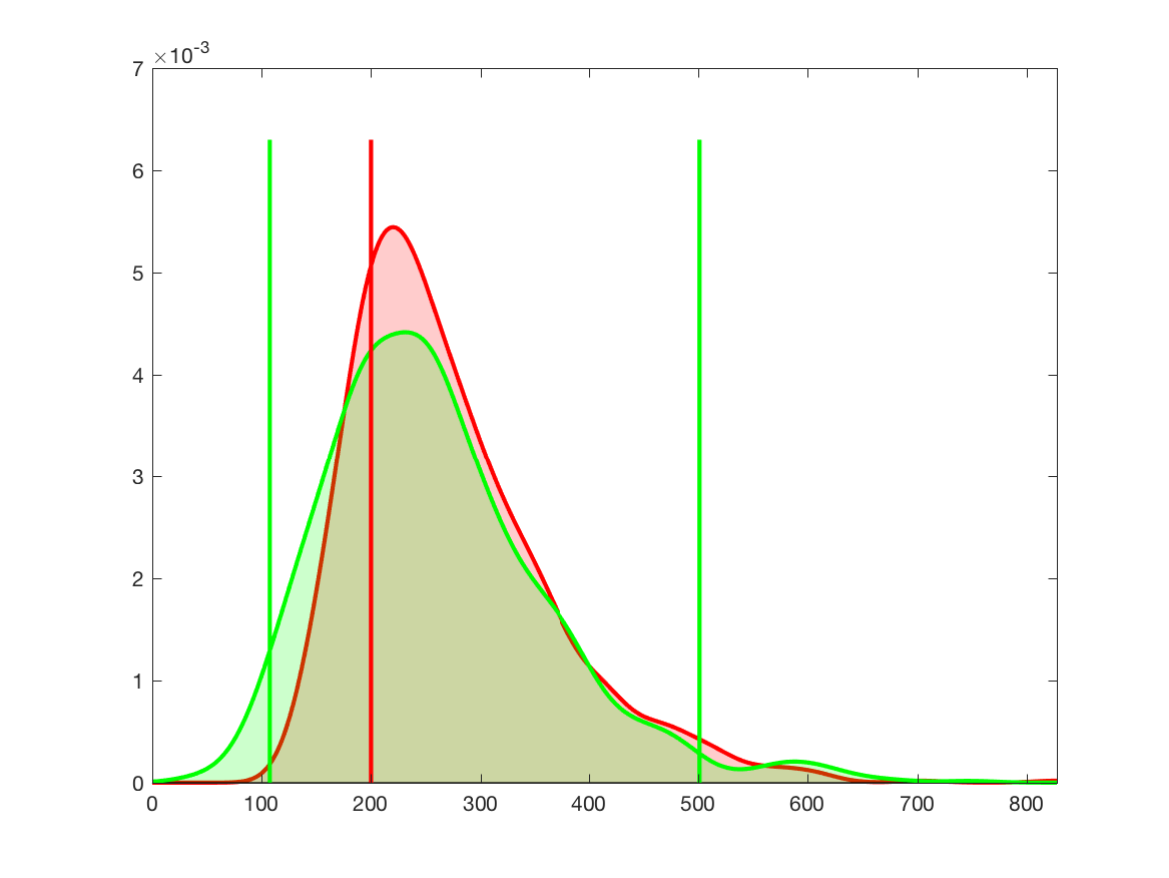} &
\includegraphics[width=0.15\textwidth,height=0.22\textwidth]{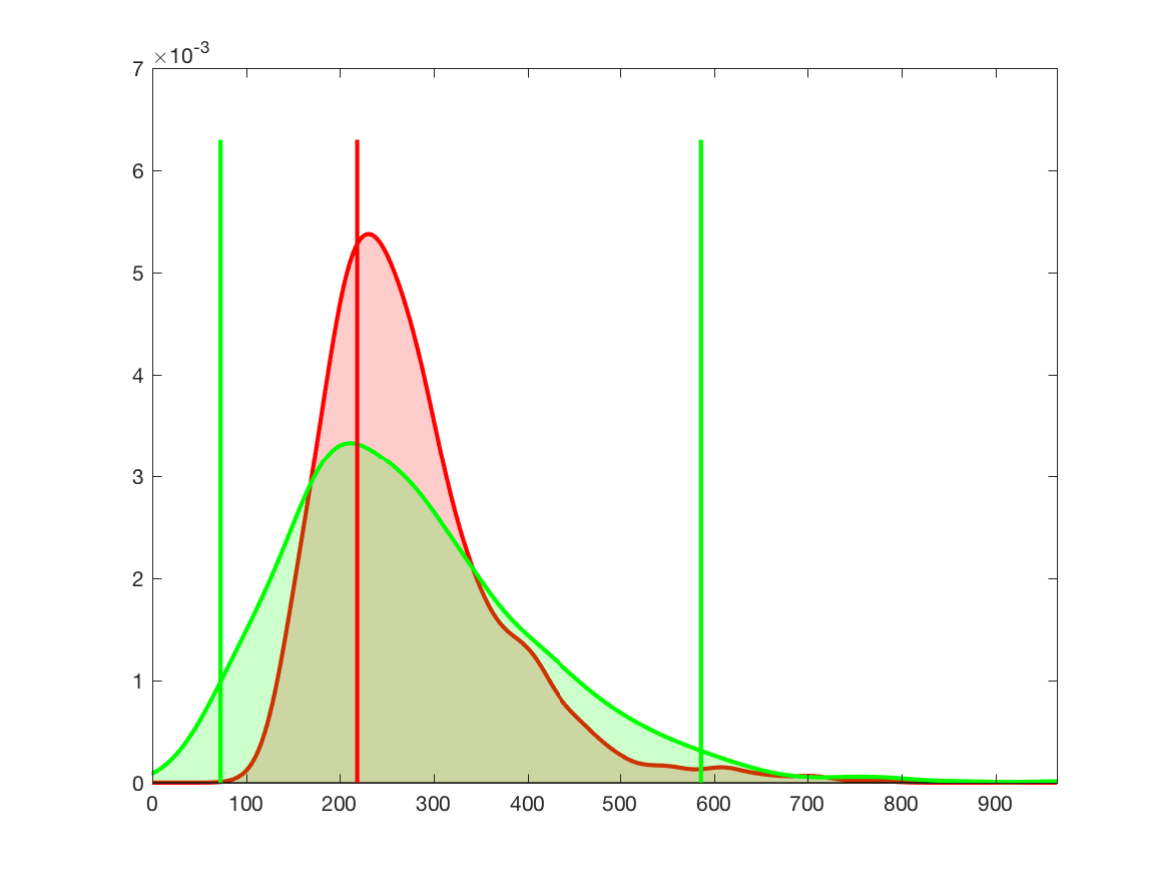}\\

\sidecap{$\varepsilon=10$} & \includegraphics[width=0.15\textwidth,height=0.22\textwidth]{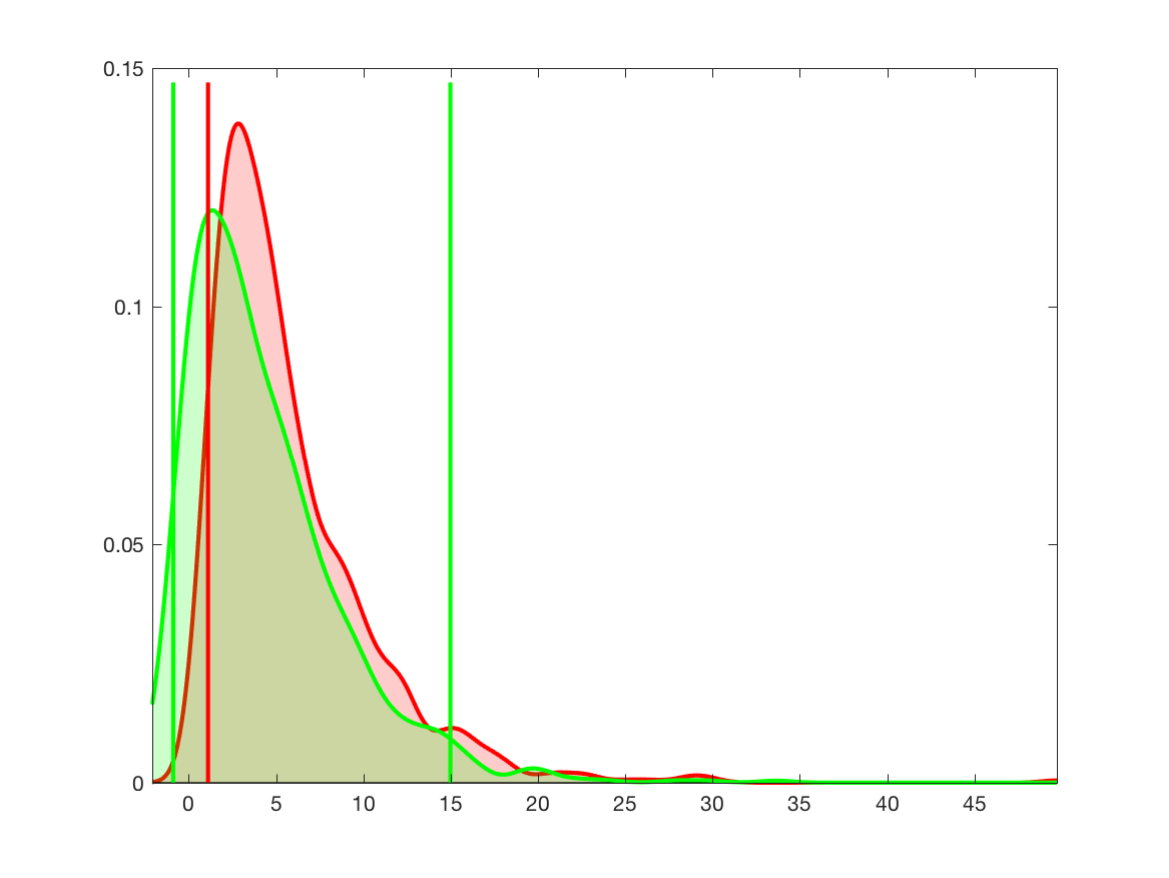} &
\includegraphics[width=0.15\textwidth,height=0.22\textwidth]{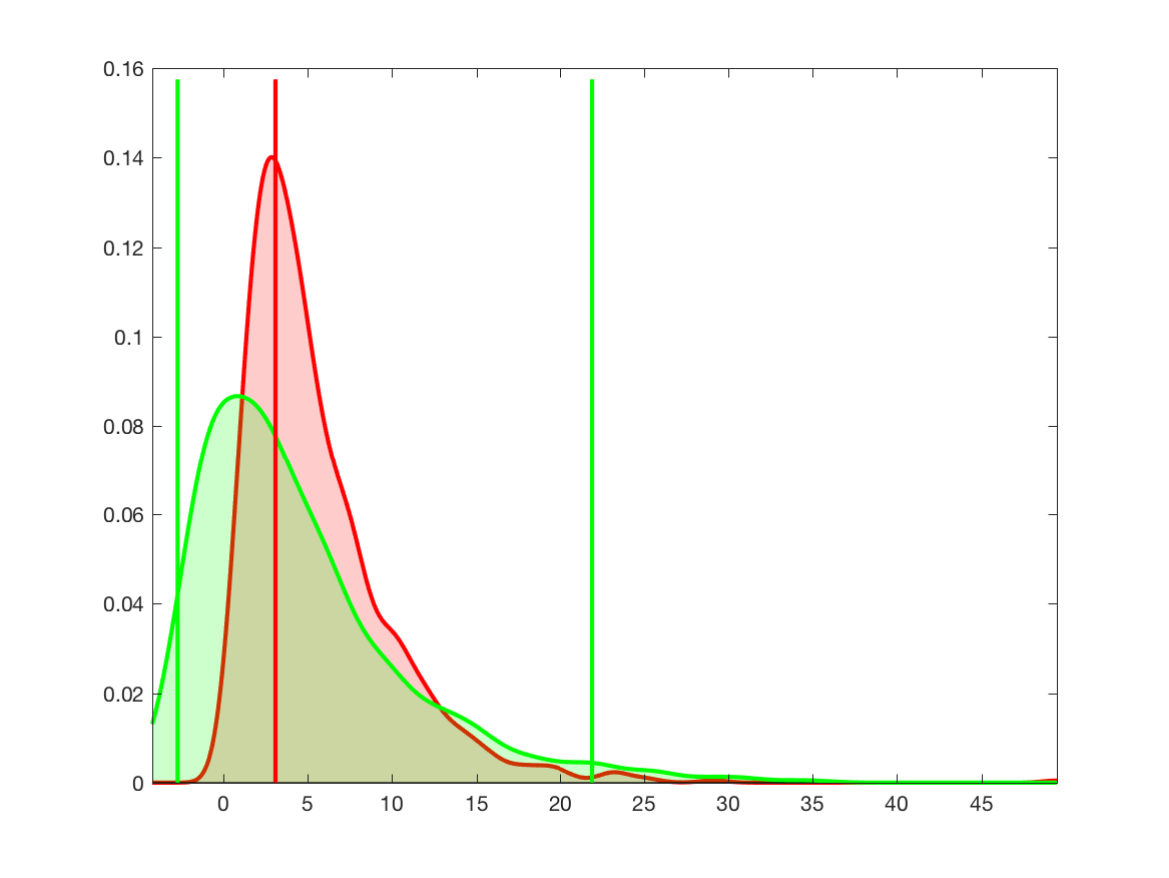} &
\includegraphics[width=0.15\textwidth,height=0.22\textwidth]{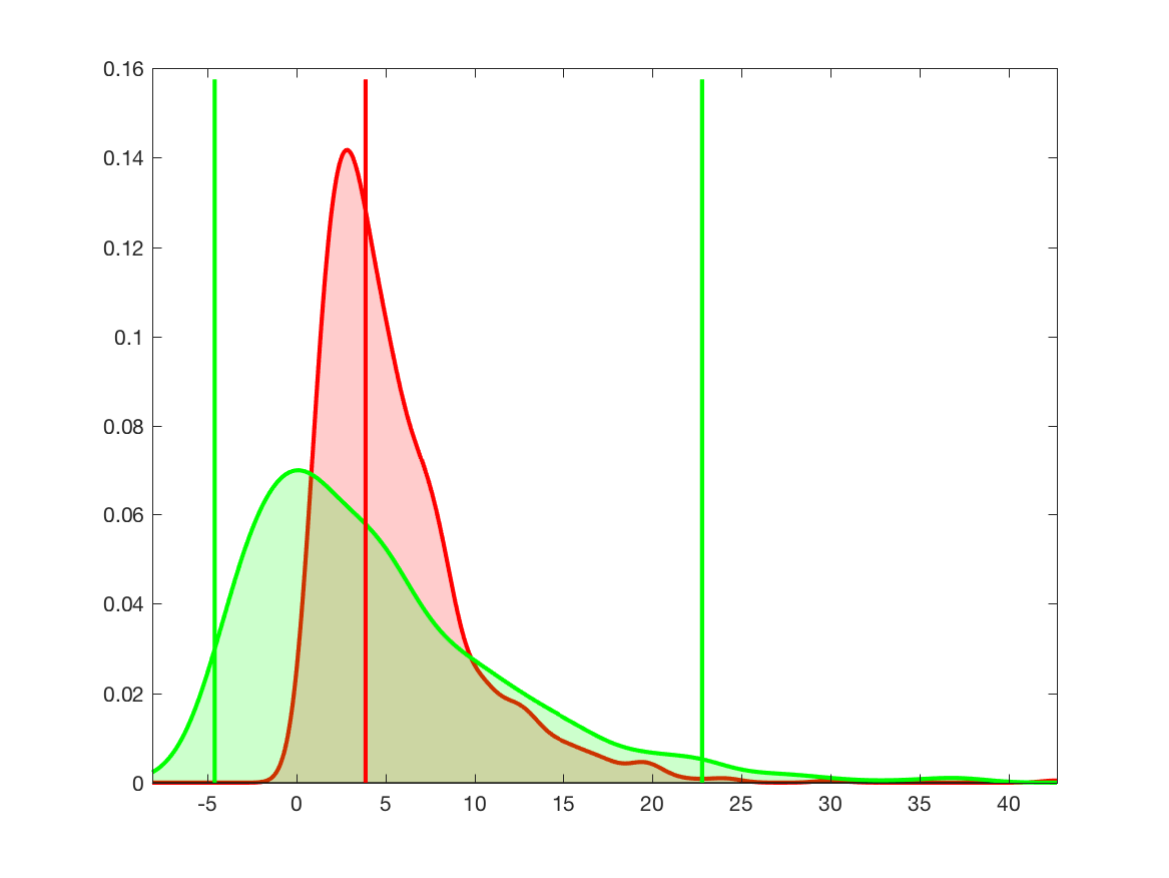} &
\hspace{0.2cm}
 \includegraphics[width=0.15\textwidth,height=0.22\textwidth]{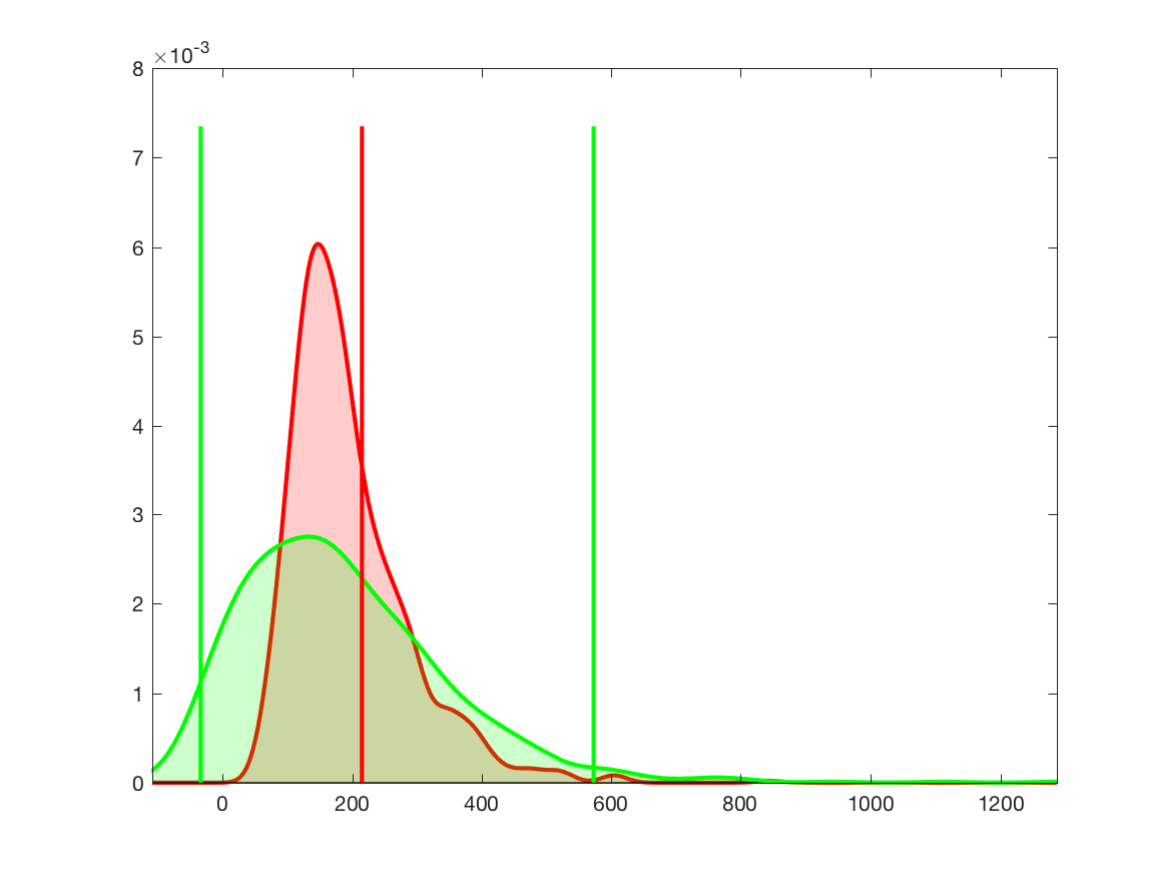} &
\includegraphics[width=0.15\textwidth,height=0.22\textwidth]{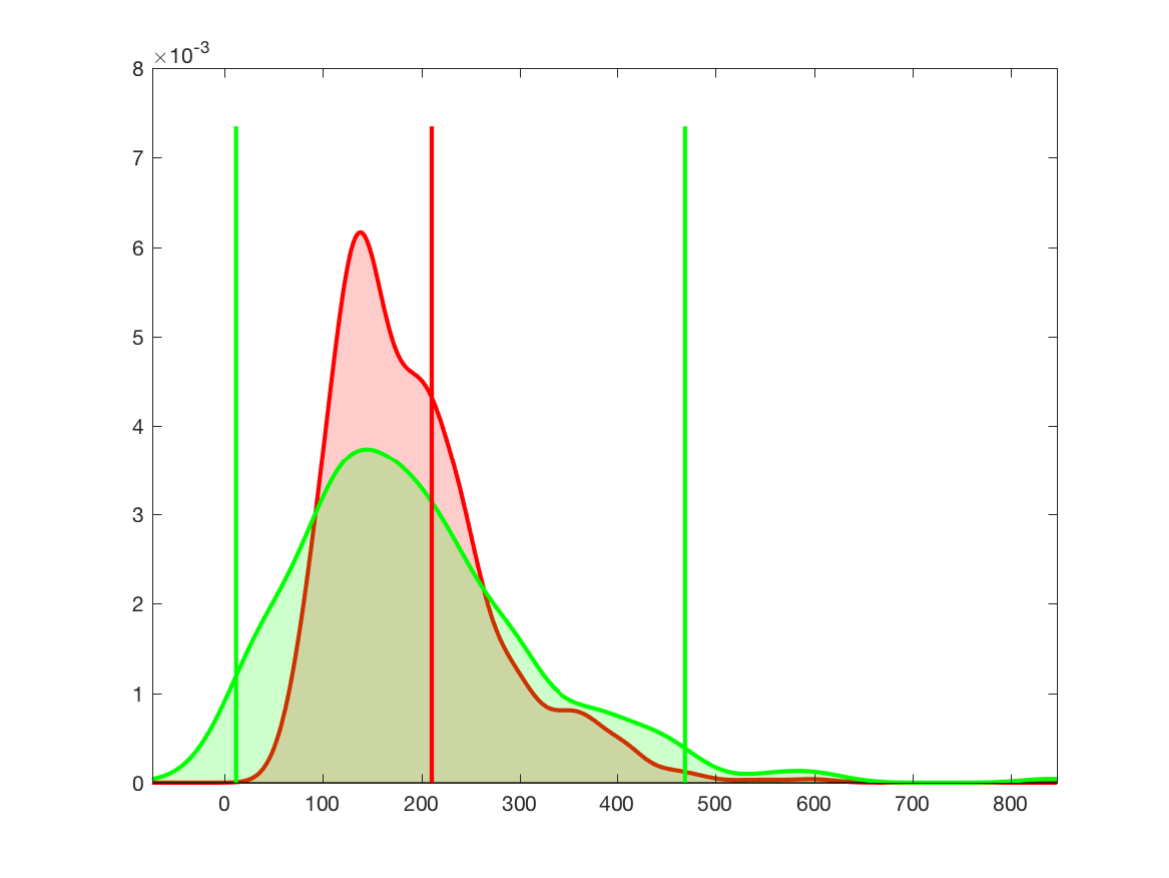} &
\includegraphics[width=0.15\textwidth,height=0.22\textwidth]{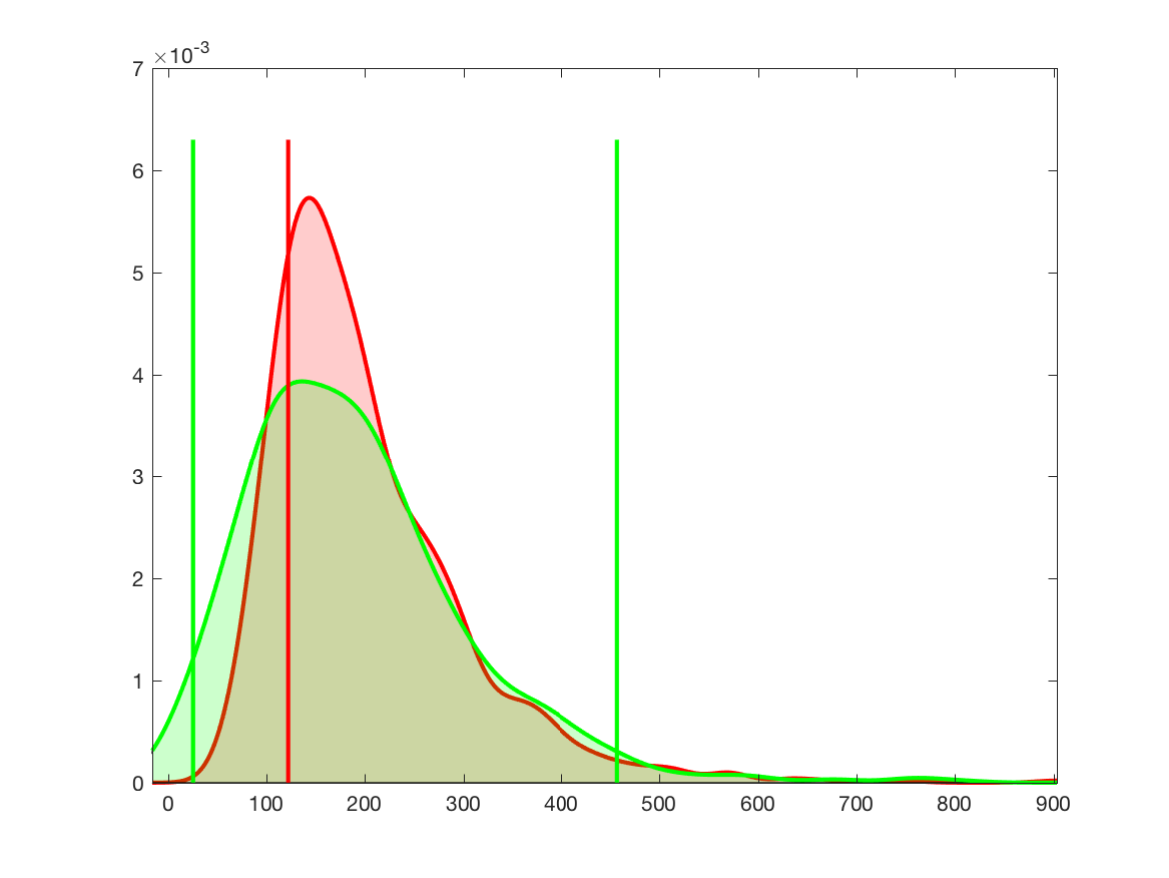}\\

\sidecap{$\varepsilon=100$} &  \includegraphics[width=0.15\textwidth,height=0.22\textwidth]{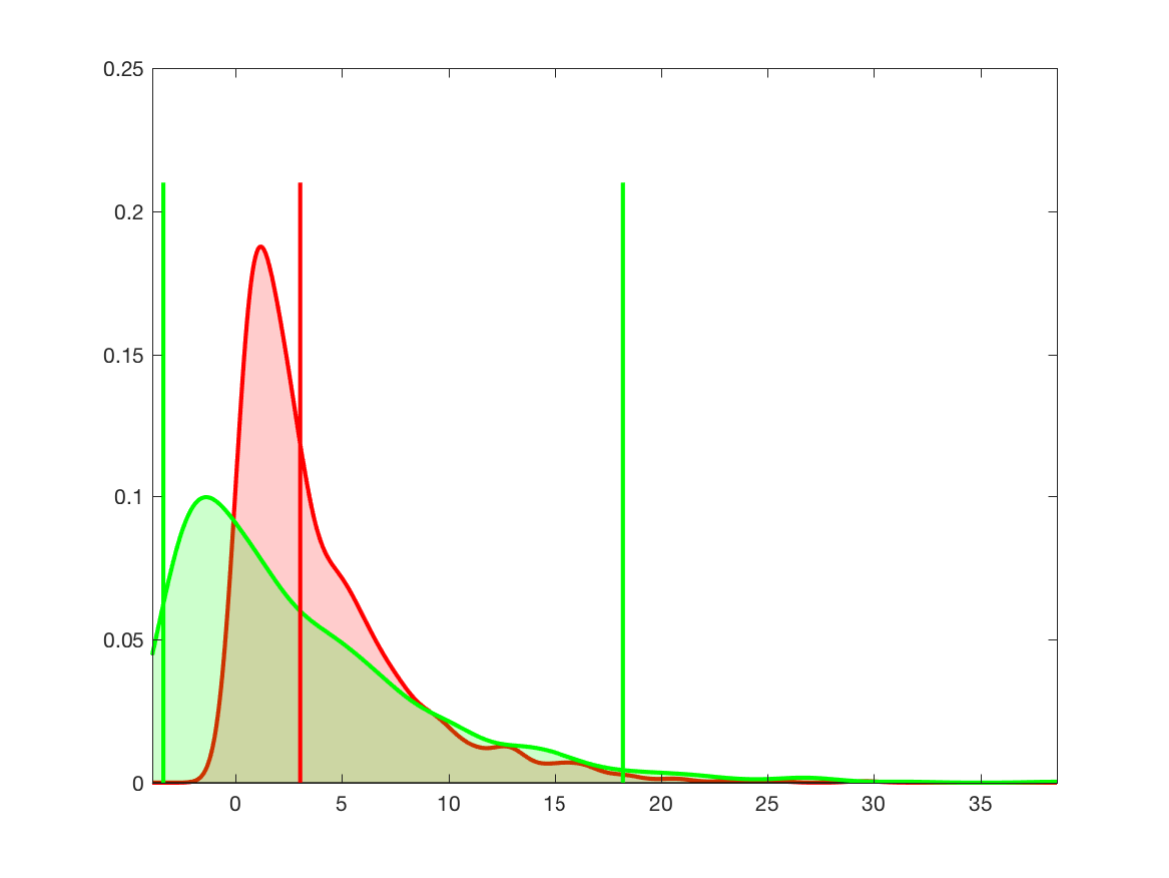} &
\includegraphics[width=0.15\textwidth,height=0.22\textwidth]{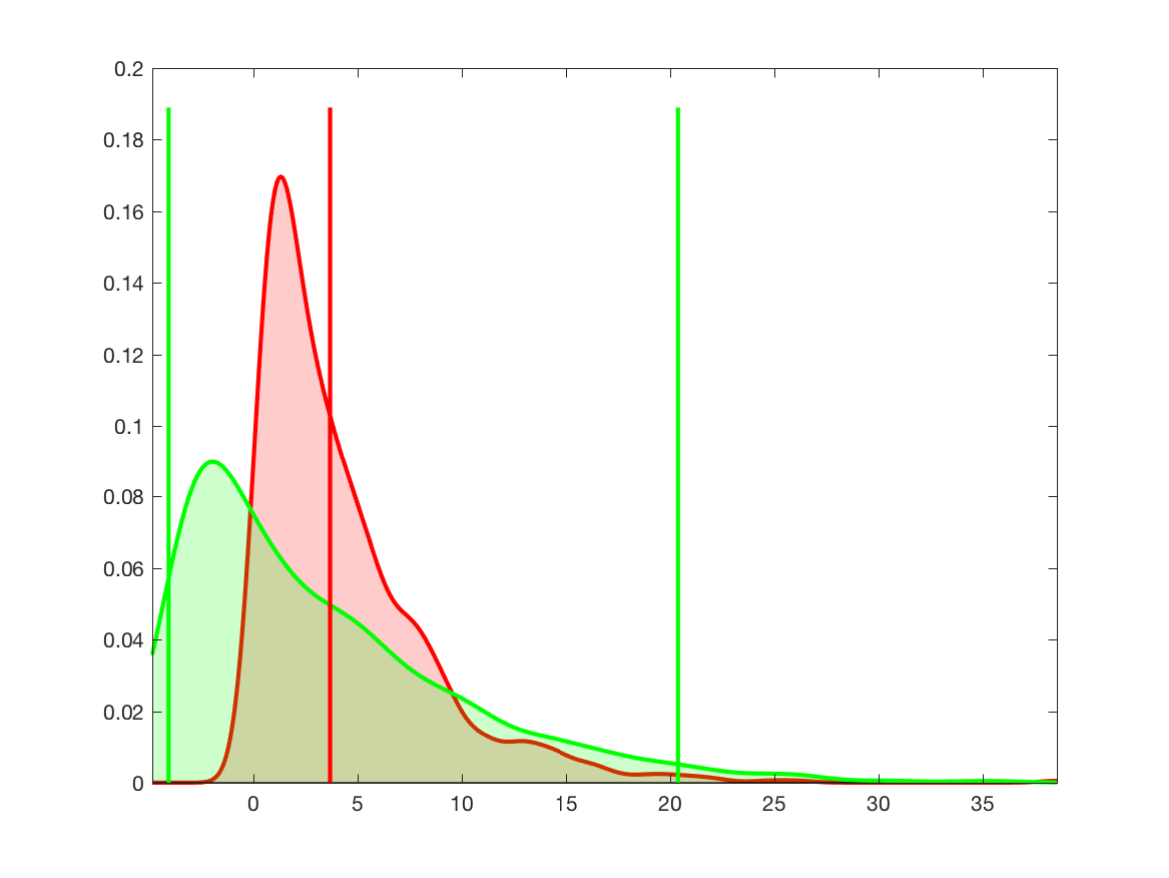} &
\includegraphics[width=0.15\textwidth,height=0.22\textwidth]{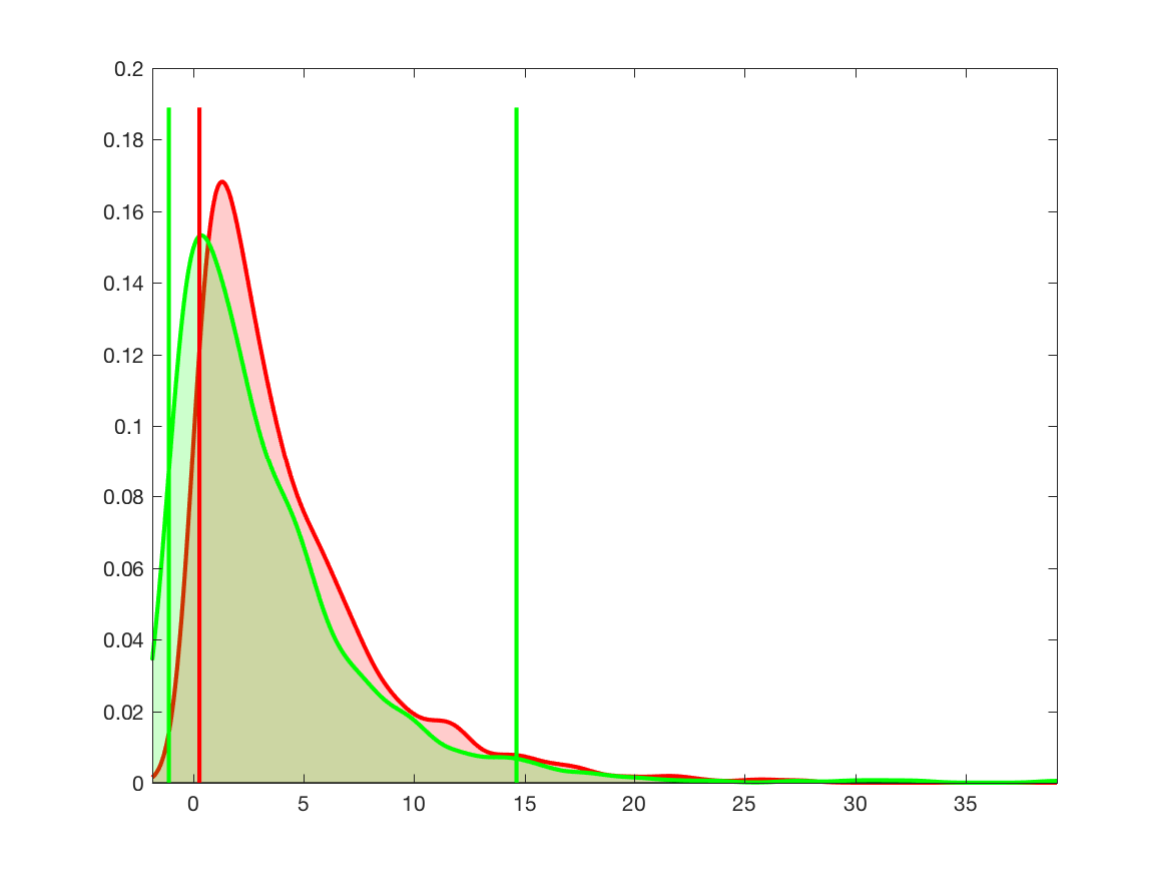} &
\includegraphics[width=0.15\textwidth,height=0.22\textwidth]{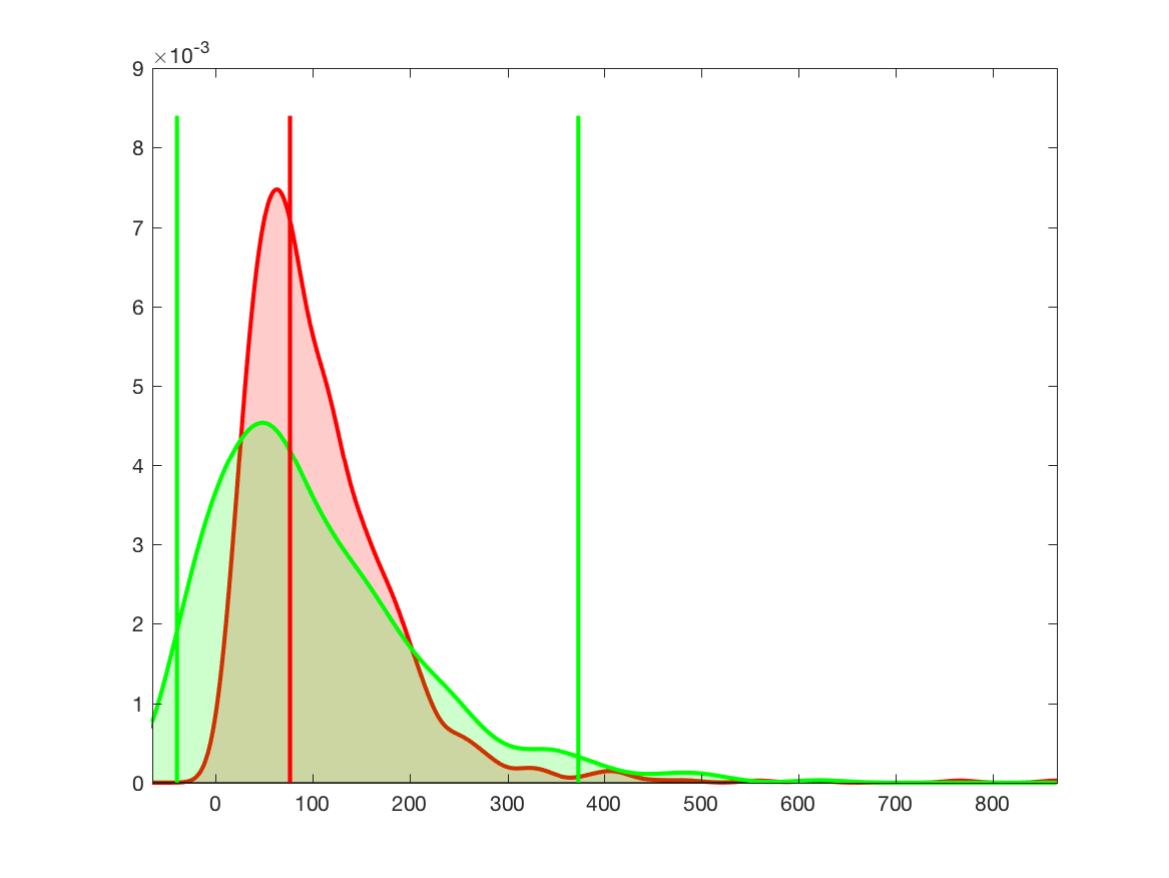} &
\includegraphics[width=0.15\textwidth,height=0.22\textwidth]{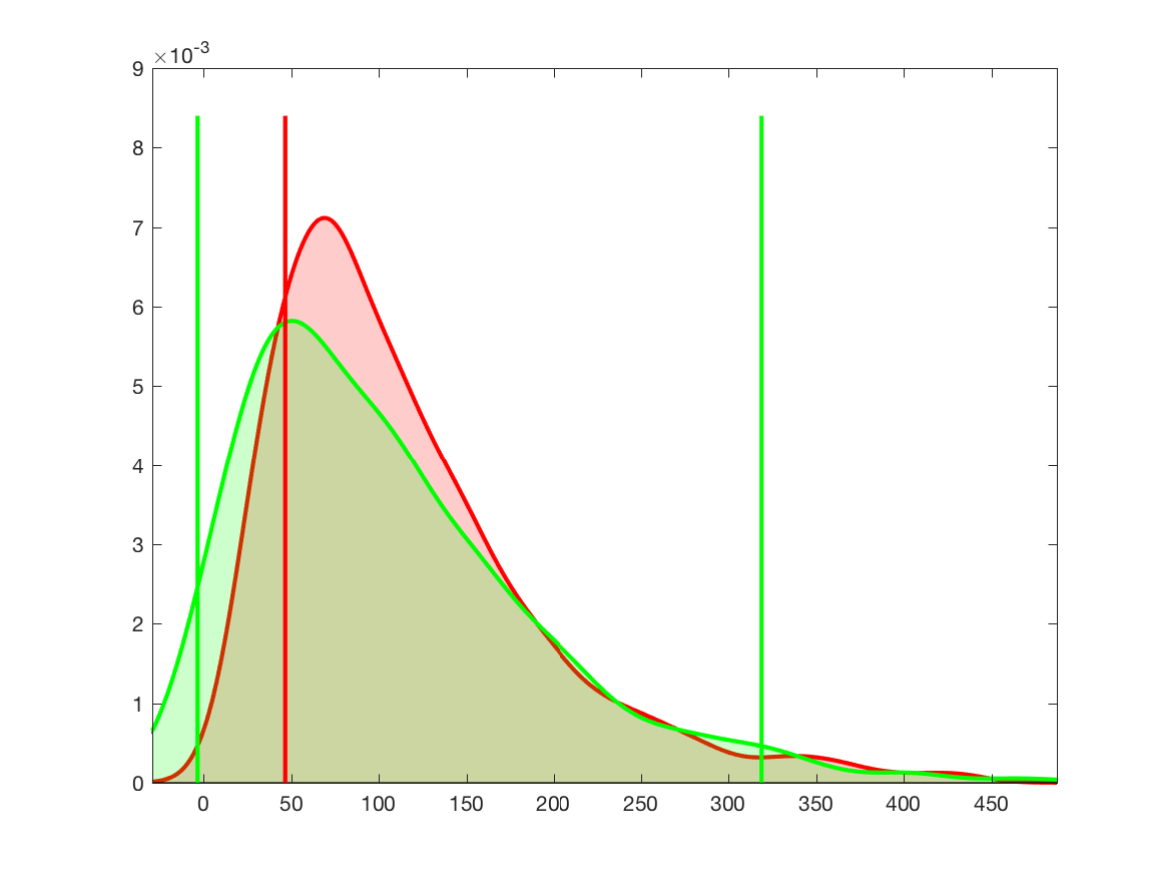} &
\includegraphics[width=0.15\textwidth,height=0.22\textwidth]{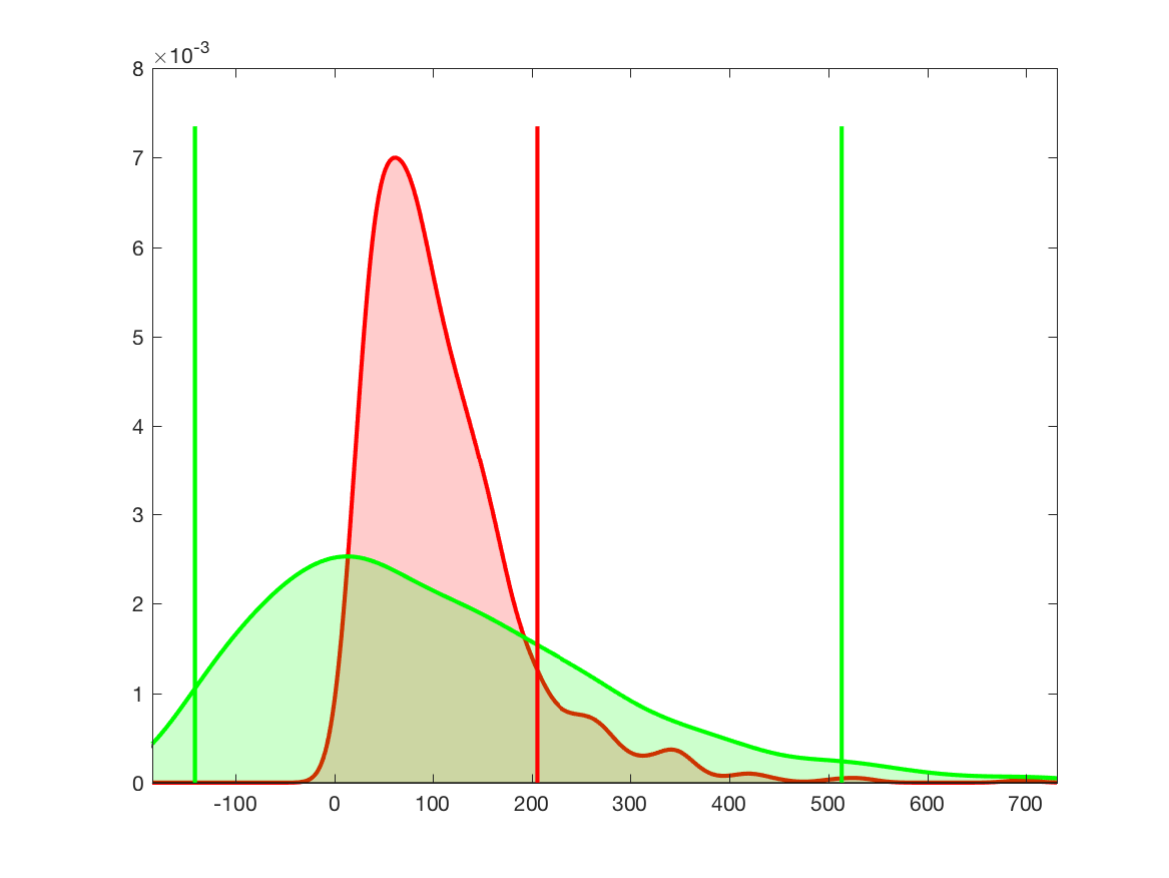}\\
&\multicolumn{3}{c}{Grid $5\times 5$}&\multicolumn{3}{c}{Grid $20\times 20$}
\end{tabular}
\caption{\label{fig:H0one}Case $a=b$ with one sample. Illustration of the bootstrap with $\varepsilon=1, 10, 100$ and two grids of size $5\times 5$ (left) and $20\times 20$ (right) to approximate the non-asymptotic distribution of the empirical Sinkhorn loss. Densities in red represent the distribution of $n\WB_{p,\varepsilon}^p(\hat{a}_n,a)$. The green density represents the distribution of the random variable $n(\WB_{p,\varepsilon}^p(\hat{a}_n,a)-\WB_{p,\varepsilon}^p(\hat{a}^{\ast}_n,a)-\langle\ualpha^{\hat{a}_n,a},\hat{a}_n^{\ast}-\hat{a}_n\rangle)$ in \eqref{eq:boot_null_loss_one}.}
\end{figure}

\subsubsection{Hypothesis $a=b$ - Two samples.}

We still consider $a=b$ to be the uniform distribution on a square grid and we sample two measures from $a$ denoted $\hat{a}_n,\hat{b}_m$. We  compute the non-asymptotic distribution of $(nm/(m+n)) \WB_{p,\varepsilon}^p(\hat{a}_n,\hat{b}_m)$ which, from Theorem \ref{Th_CLT_loss_H0_two}, must converge to 
$\frac{1}{2}\sum_{i=1}^N\tilde{\lambda}_i\chi^2_{\scriptscriptstyle{i}}(1)$  with $ \{\tilde{\lambda}_i\}_i$ the eigenvalues of
$$ \diag(\sqrt{\gamma}\Sigma(a)^{1/2},\sqrt{1-\gamma}\Sigma(a)^{1/2})\nabla^2\WB_{p,\varepsilon}^p(a,a) \diag(\sqrt{\gamma}\Sigma(a)^{1/2},\sqrt{1-\gamma}\Sigma(a)^{1/2}).$$
The results are displayed in red in Figure \ref{fig:H0two}, together  with the bootstrap distribution (in green) $\rho_{n,m}^2(\WB_{p,\varepsilon}^p(\hat{a}^{\ast}_n,\hat{b}_m^{\ast})-\WB_{p,\varepsilon}^p(\hat{a}_n,\hat{b}_m)-\langle\ualpha^{\hat{a}_n,\hat{b}_m},\hat{a}_n^{\ast}-\hat{a}_n\rangle-\langle\vbeta^{\hat{a}_n,\hat{b}_m},\hat{b}_m^{\ast}-\hat{b}_m\rangle)$. We obtain similar results  to the one sample case.

\renewcommand{\sidecap}[1]{ {\begin{sideways}\parbox{0.2\textwidth}{\centering #1}\end{sideways}} }
\begin{figure}[ht!]
\centering
\begin{tabular}{cccc|cc}
& $n=10^3$ & $n=10^4$ & $n=10^5$ & \hspace{0.2cm}  
 $n=10^4$ & $n=10^5$ \\
\sidecap{$\varepsilon=1$} & \includegraphics[width=0.15\textwidth,height=0.22\textwidth]{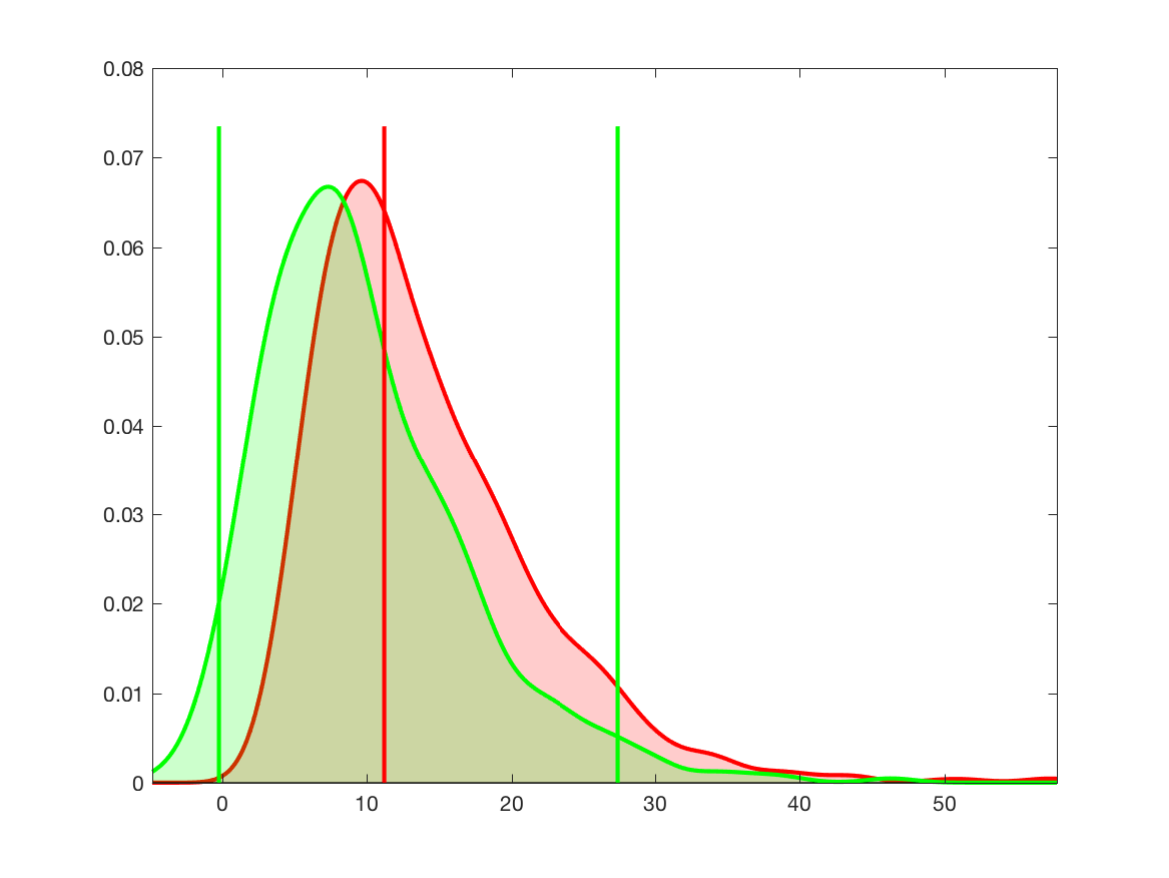} &
\includegraphics[width=0.15\textwidth,height=0.22\textwidth]{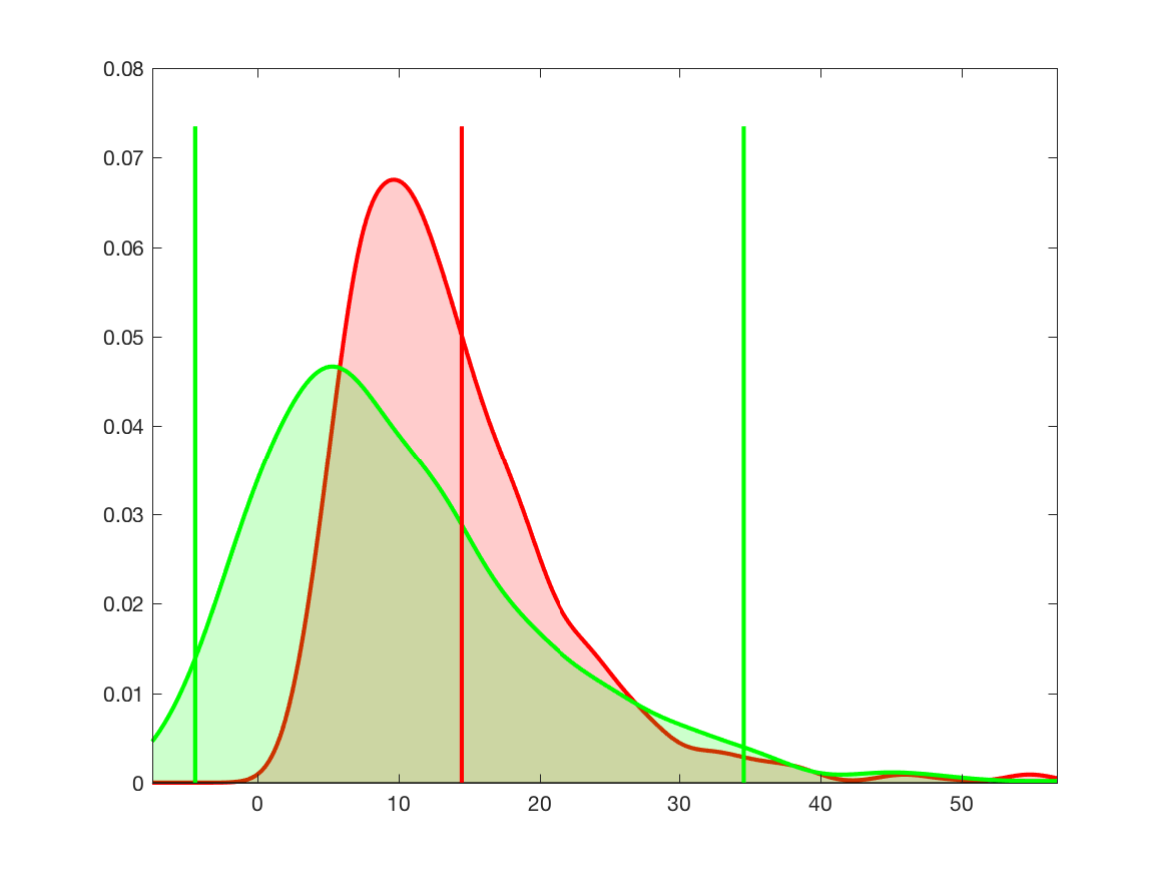} &
\includegraphics[width=0.15\textwidth,height=0.22\textwidth]{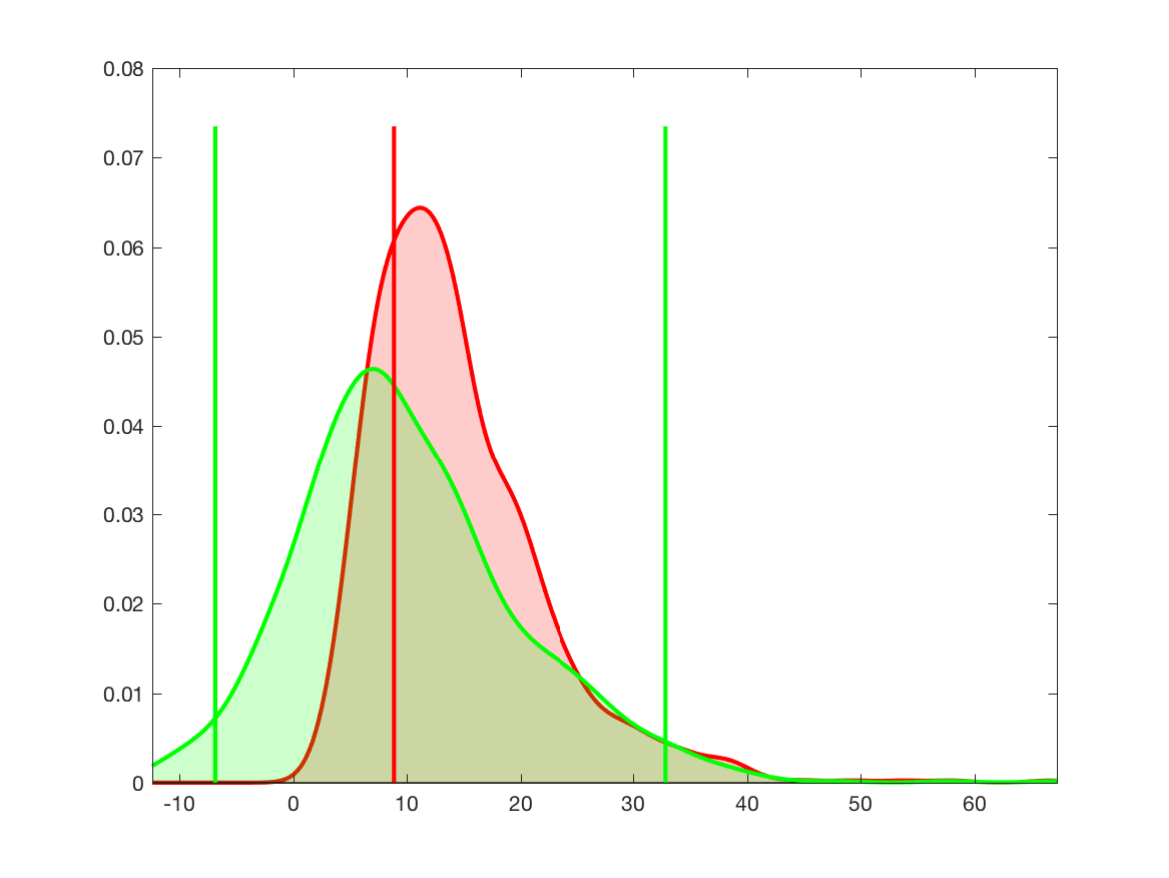} &
\hspace{0.2cm}  
\includegraphics[width=0.15\textwidth,height=0.22\textwidth]{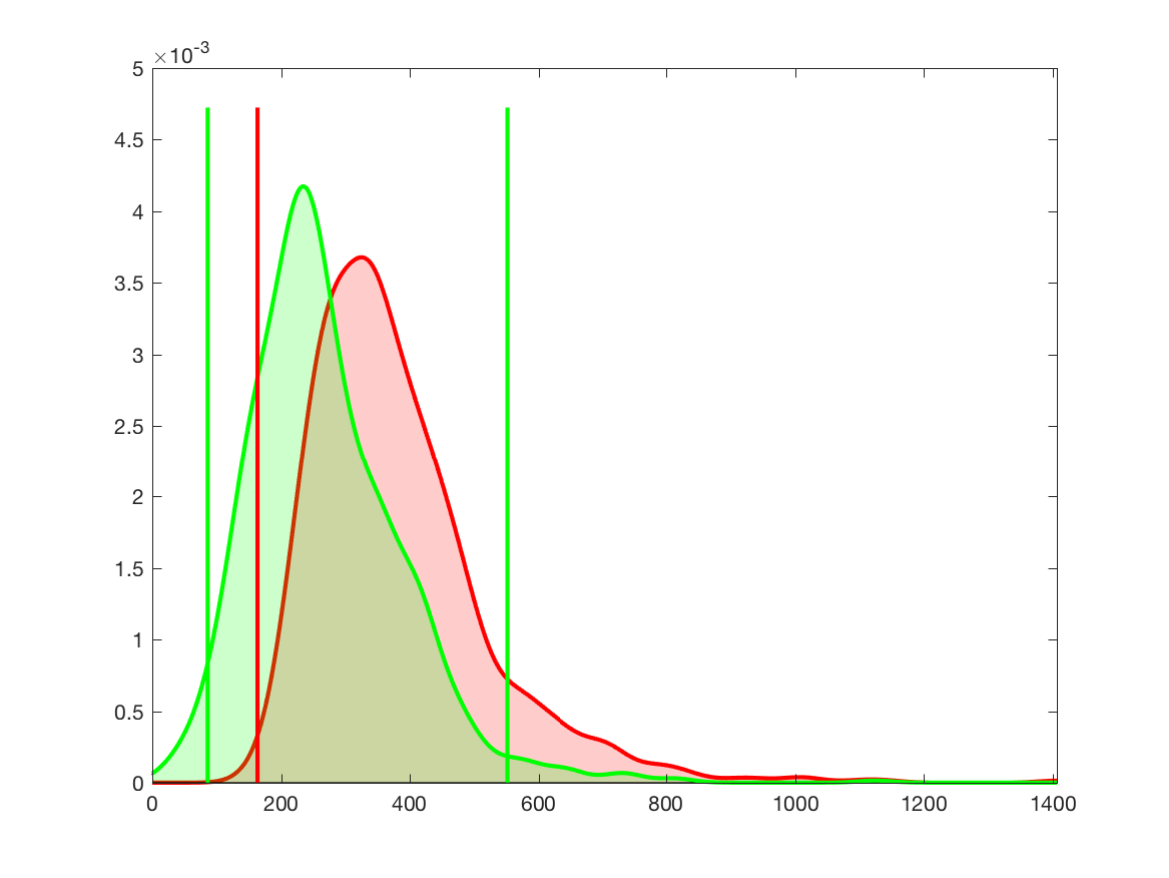} &
\includegraphics[width=0.15\textwidth,height=0.22\textwidth]{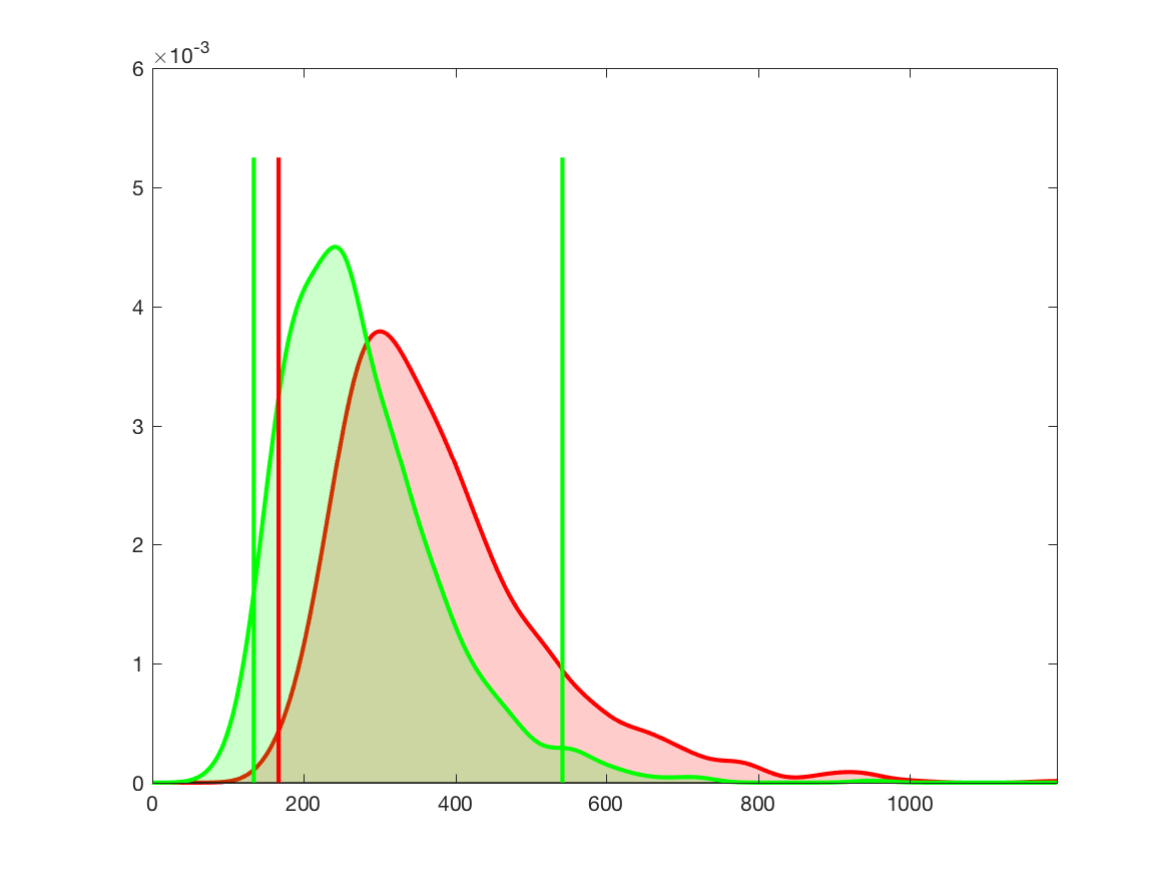}\\

\sidecap{$\varepsilon=10$} & \includegraphics[width=0.15\textwidth,height=0.22\textwidth]{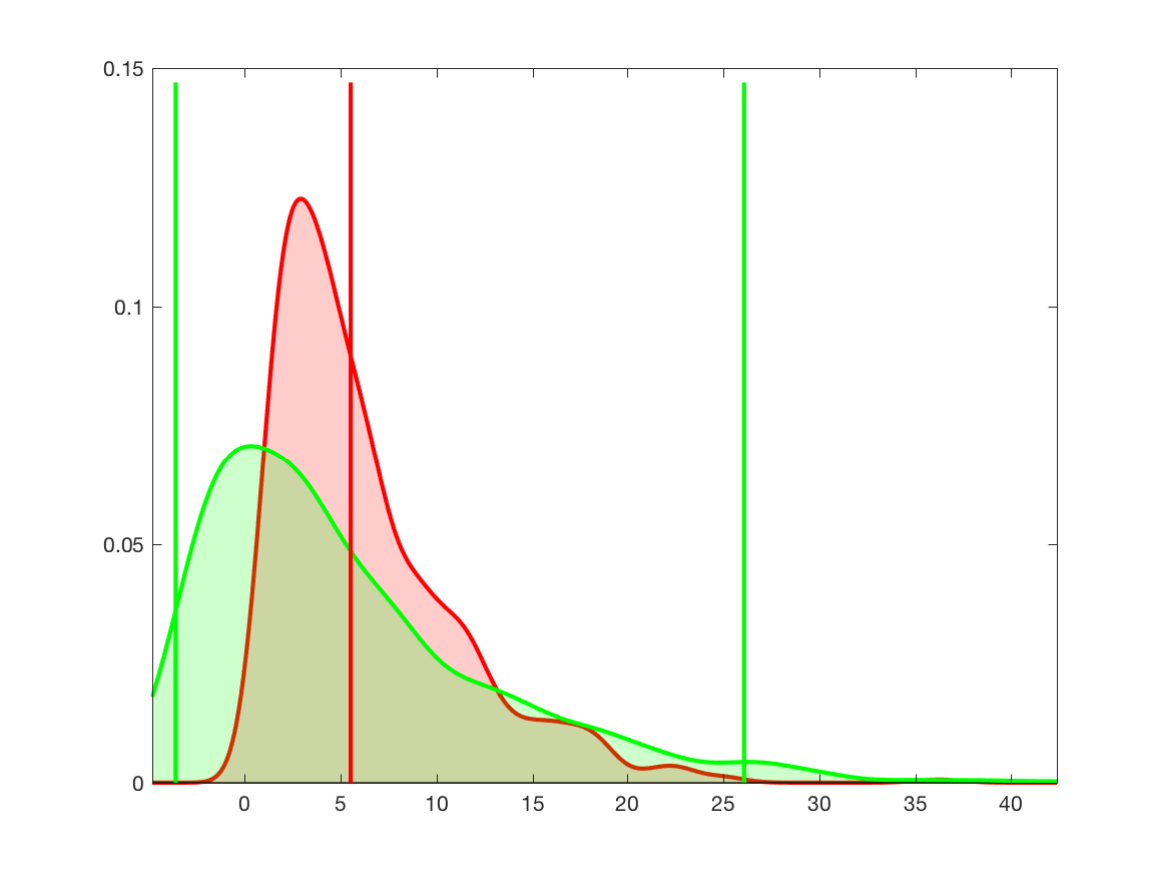} &
\includegraphics[width=0.15\textwidth,height=0.22\textwidth]{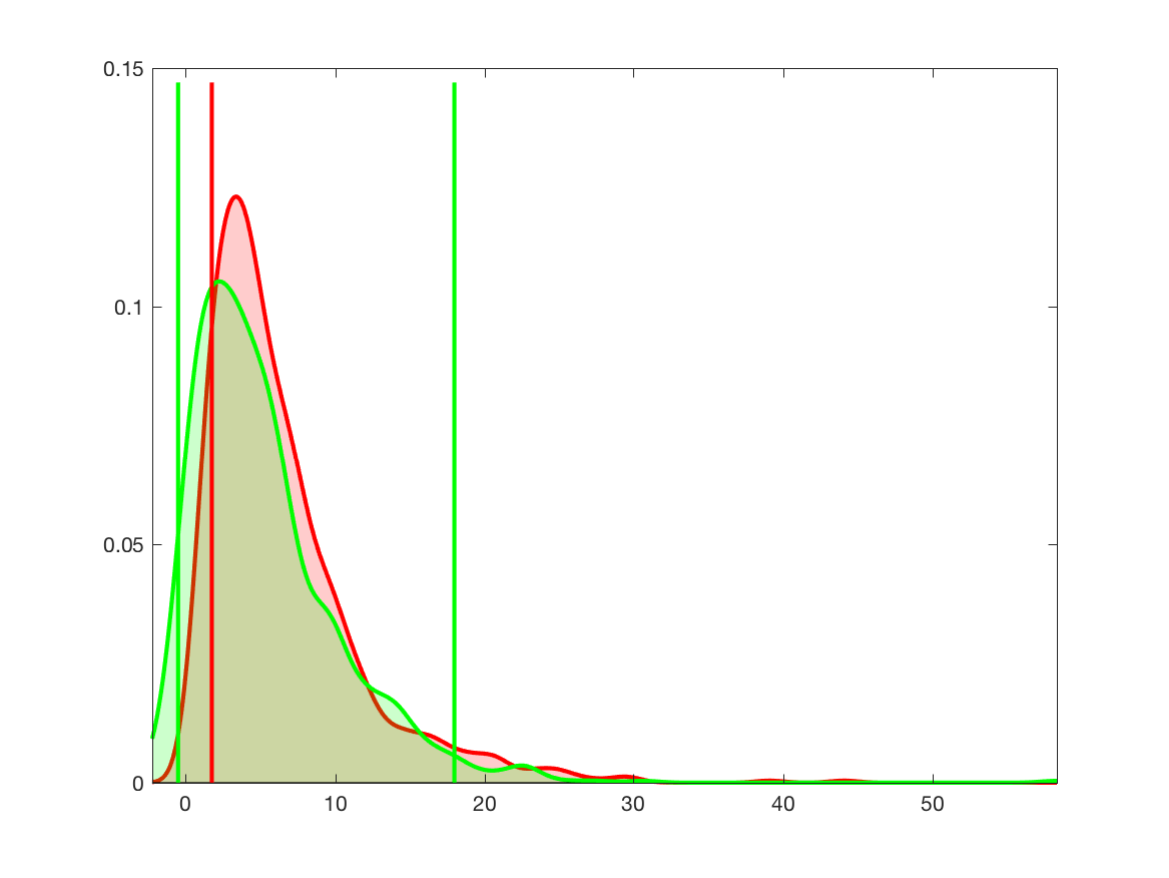} &
\includegraphics[width=0.15\textwidth,height=0.22\textwidth]{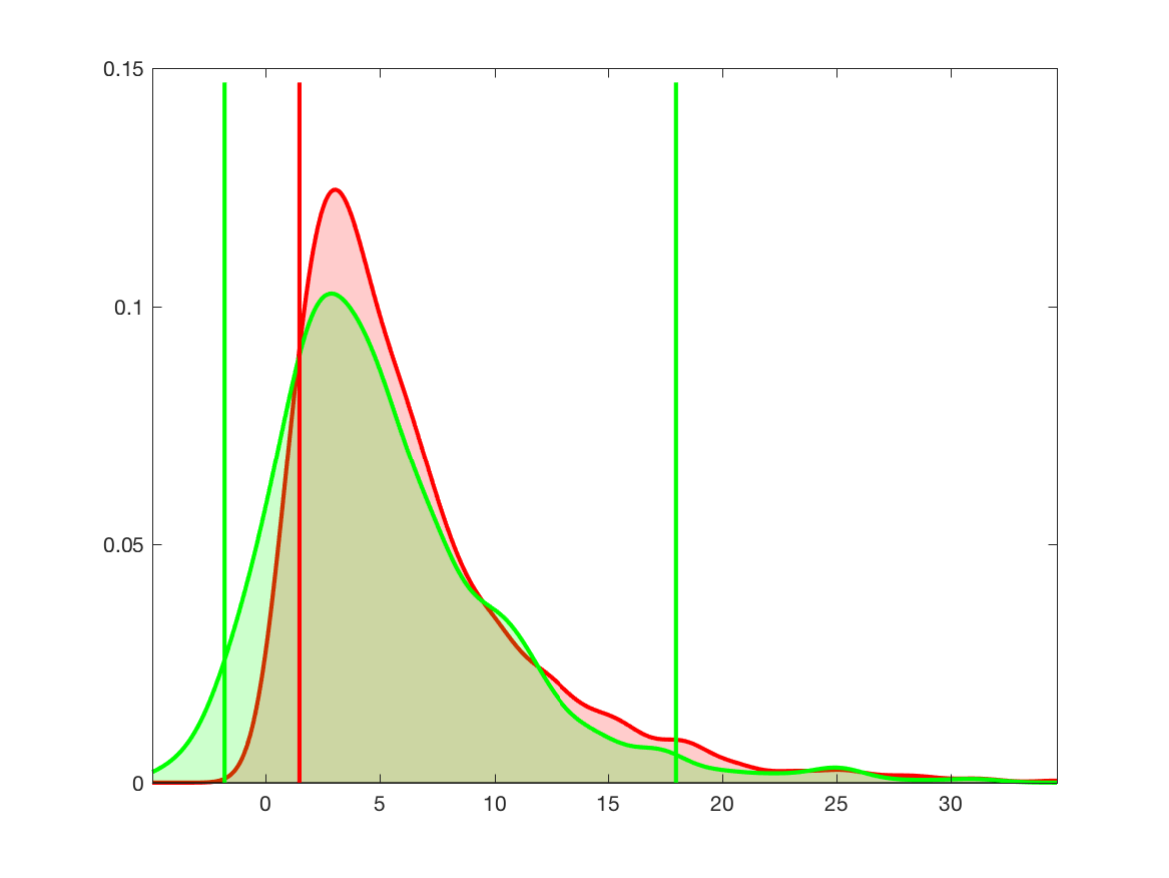} &
\hspace{0.2cm}  
\includegraphics[width=0.15\textwidth,height=0.22\textwidth]{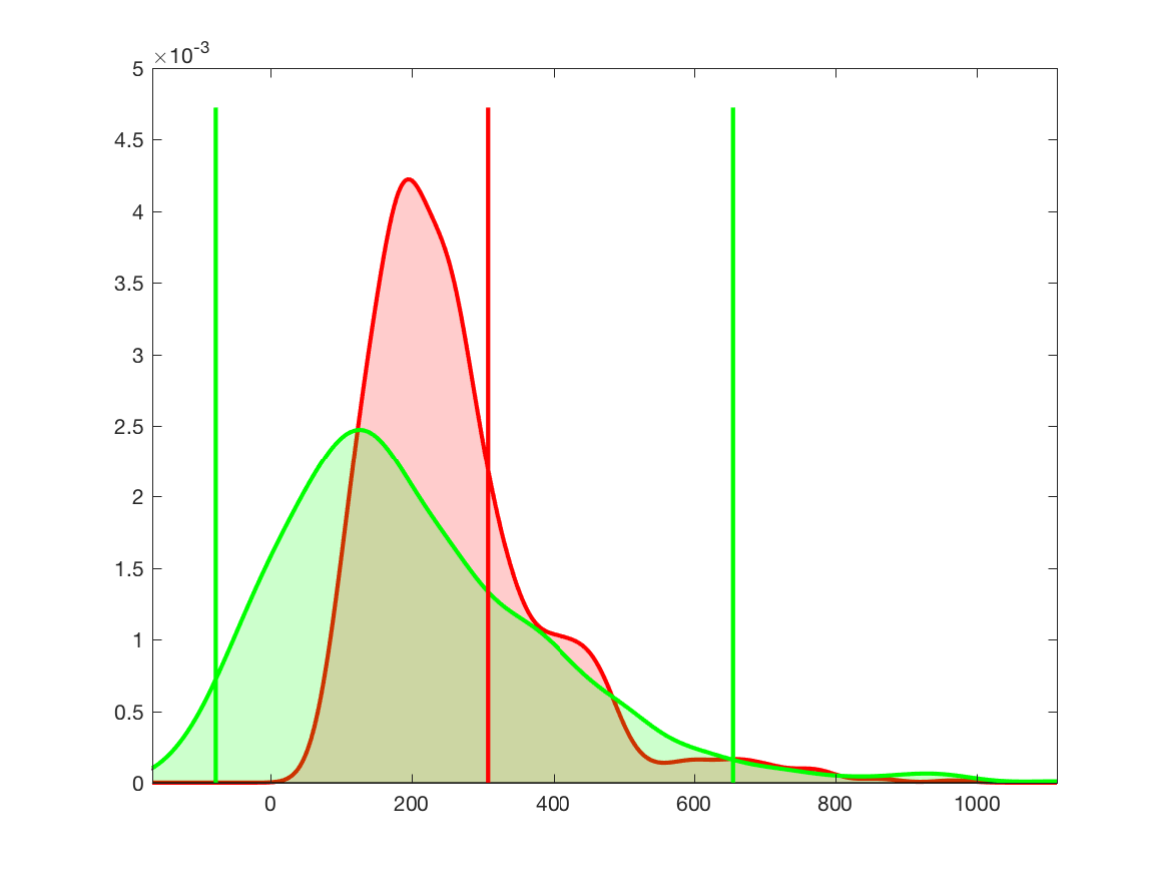} &
\includegraphics[width=0.15\textwidth,height=0.22\textwidth]{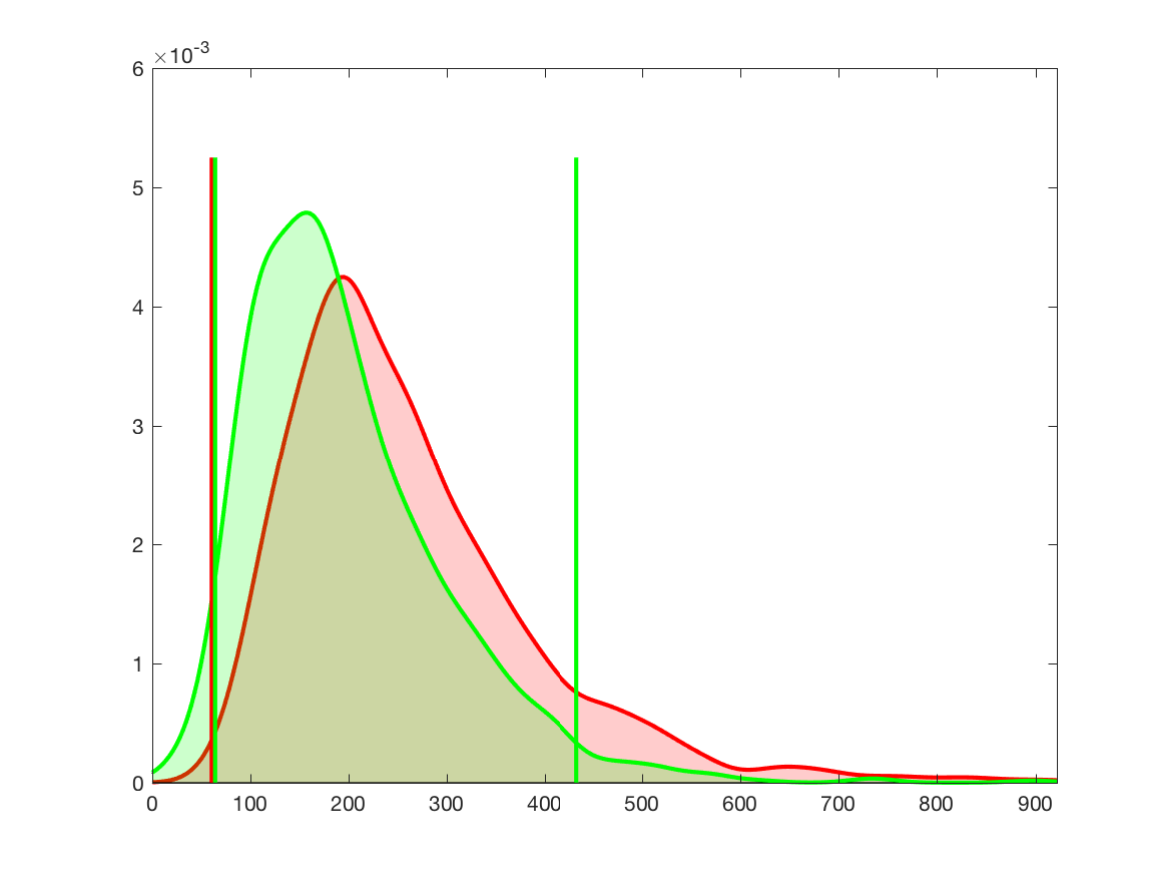}\\

\sidecap{$\varepsilon=100$} & \includegraphics[width=0.15\textwidth,height=0.22\textwidth]{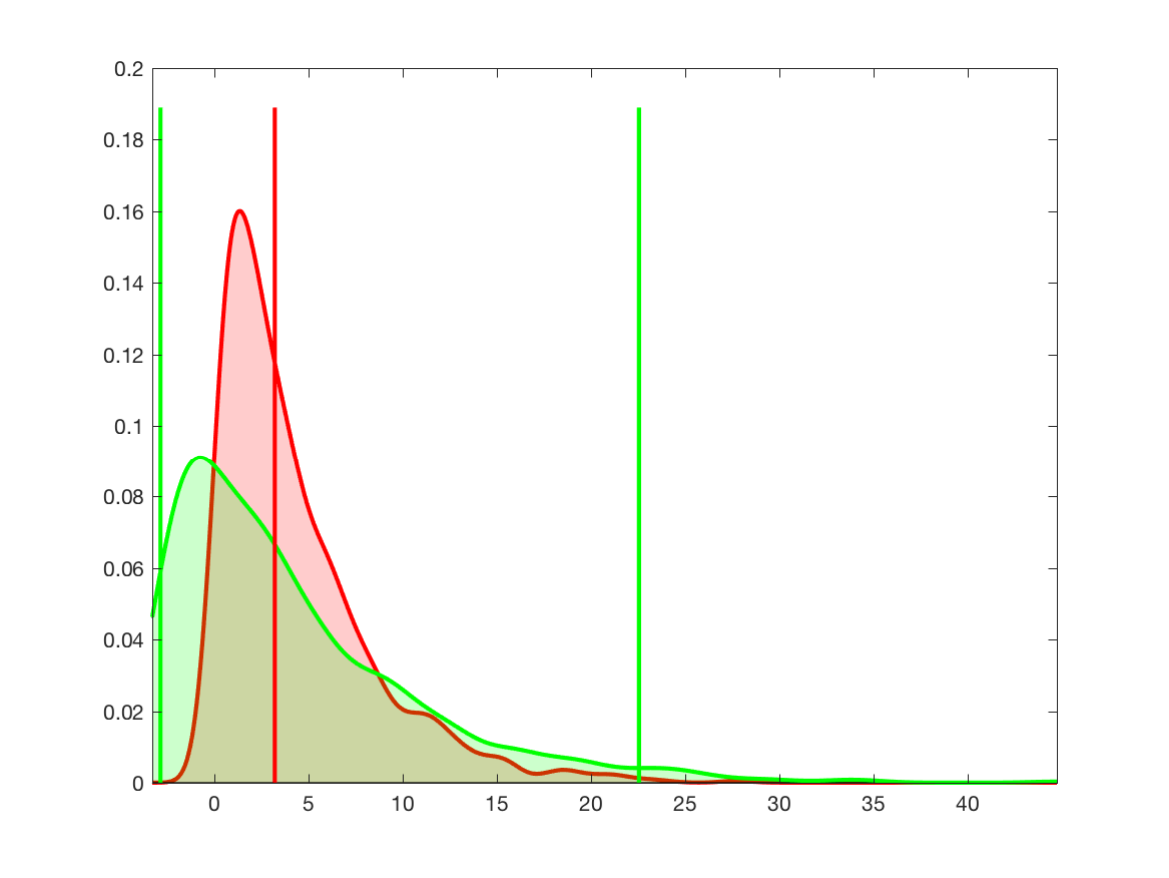} &
\includegraphics[width=0.15\textwidth,height=0.22\textwidth]{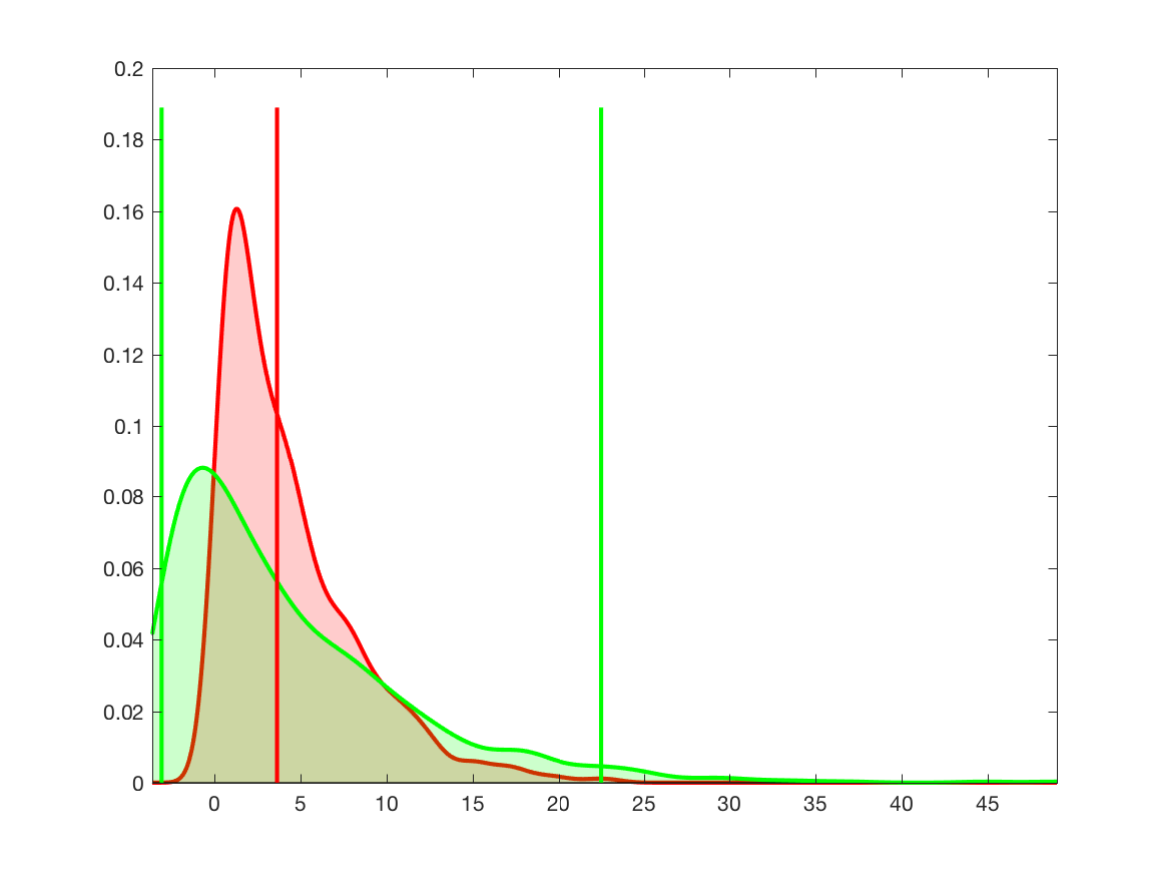} &
\includegraphics[width=0.15\textwidth,height=0.22\textwidth]{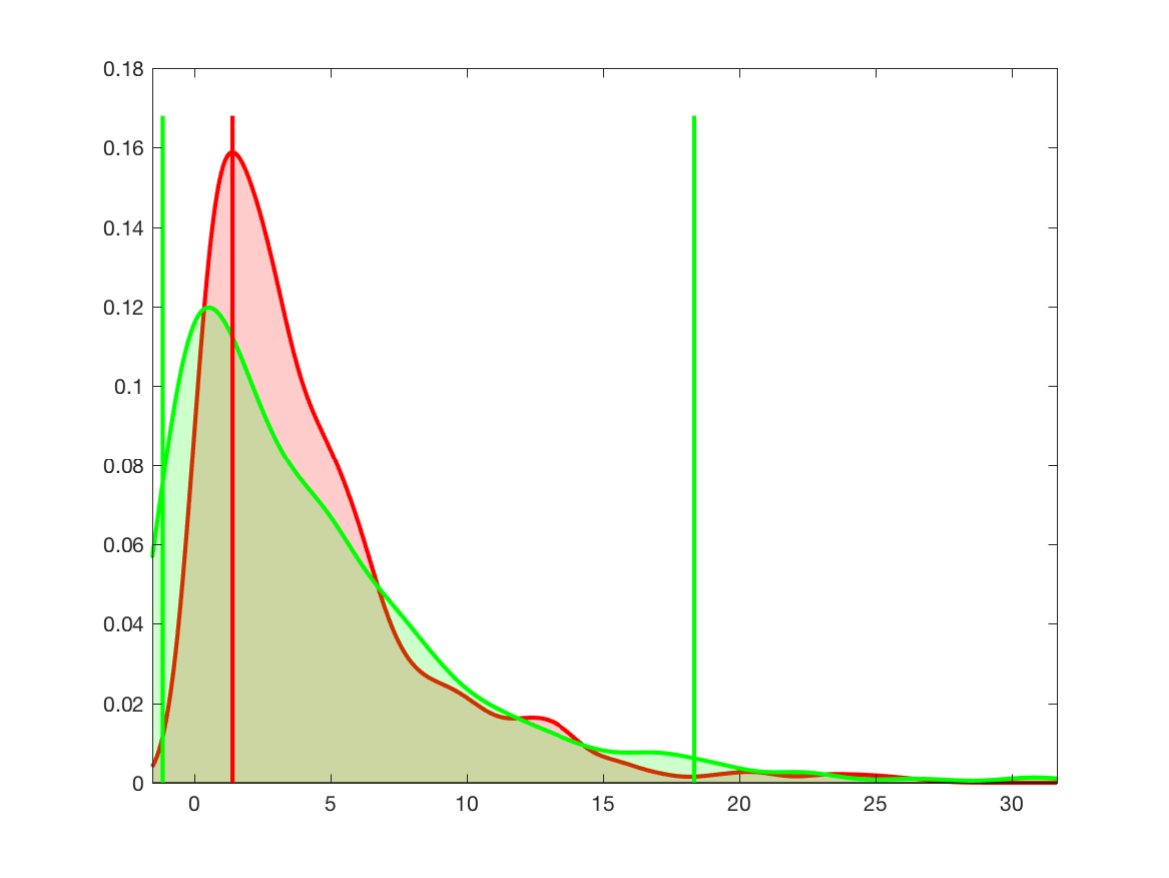} &
\includegraphics[width=0.15\textwidth,height=0.22\textwidth]{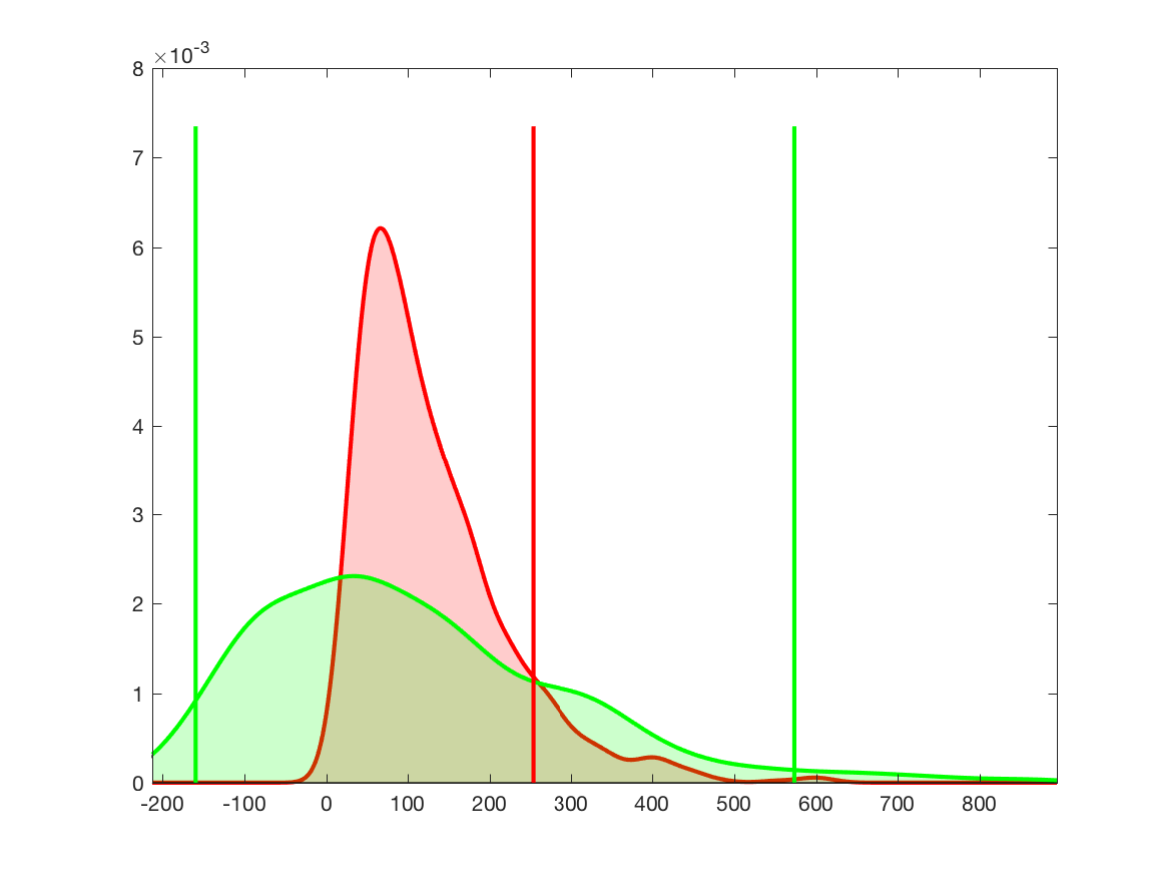} &
\includegraphics[width=0.15\textwidth,height=0.22\textwidth]{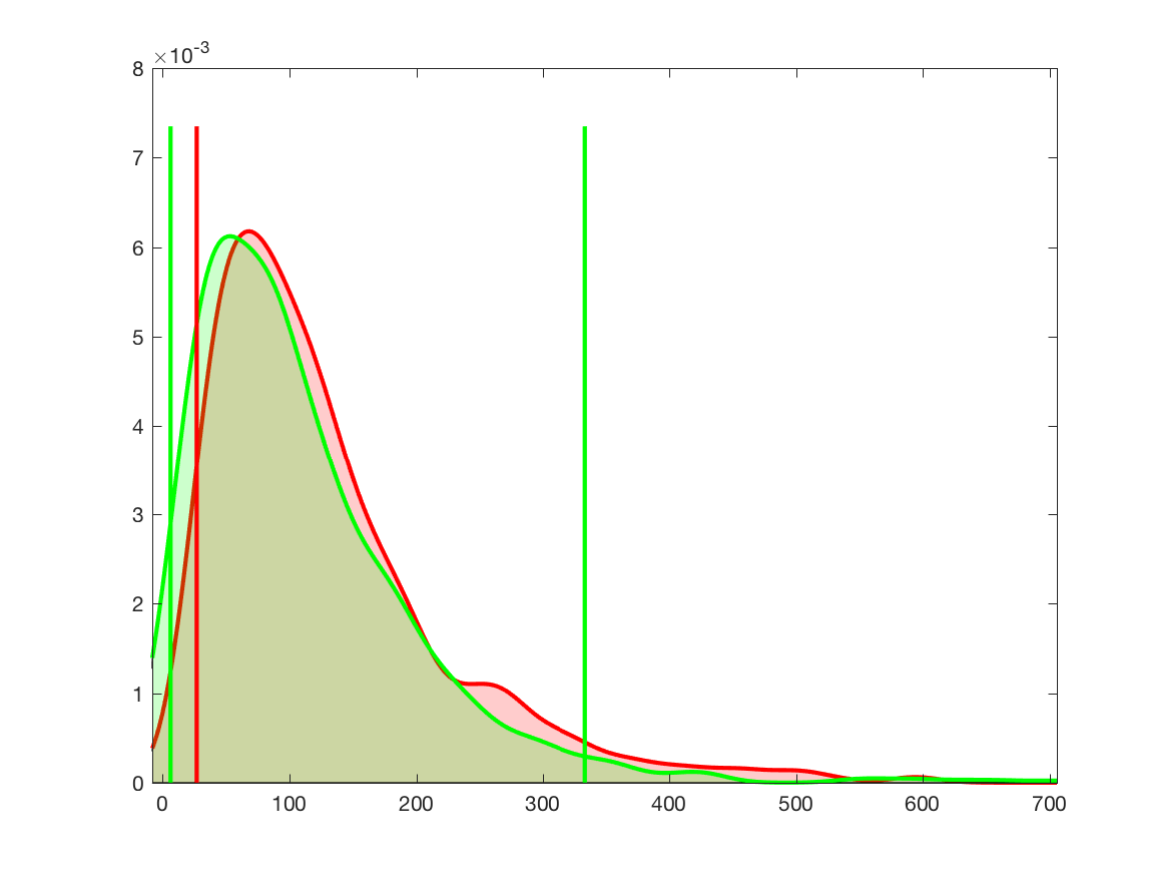}\\
&\multicolumn{3}{c}{Grid $5\times 5$}&\multicolumn{2}{c}{Grid $20\times 20$}
\end{tabular}
\caption{\label{fig:H0two}Case $a=b$ with two samples. Illustration of the bootstrap with $\varepsilon=1, 10, 100$ and two grids of size $5\times 5$ (left) and $20\times 20$ (right) to approximate the non-asymptotic distribution of the empirical Sinkhorn loss. Densities in red represent the distribution of $\rho_{n,m}^2\WB_{p,\varepsilon}^p(\hat{a}_n,\hat{b}_m)$. The green density represents the distribution of the random variable $\rho_{n,m}^2(\WB_{p,\varepsilon}^p(\hat{a}^{\ast}_n,\hat{b}_m^{\ast})-\WB_{p,\varepsilon}^p(\hat{a}_n,\hat{b}_m)-\langle\ualpha^{\hat{a}_n,\hat{b}_m},\hat{a}_n^{\ast}-\hat{a}_n\rangle-\langle\vbeta^{\hat{a}_n,\hat{b}_m},\hat{b}_m^{\ast}-\hat{b}_m\rangle)$ in \eqref{eq:boot_null_loss_two}.}
\end{figure}

\subsection{Estimation of test power using the bootstrap}
\label{subsec:test_power}
\paragraph{One sample - distribution with linear trend and varying slope parameter.}
The consistency and usefulness of the bootstrap procedure is illustrated by studying the statistical power (that is $\P(\mbox{Reject } H_0 | H_1 \mbox{ is true})$) of  statistical tests (at level $5 \%$)  based on the empirical Sinkhorn loss. For this purpose, we choose $a$ to be uniform and $b$ to be a distribution with linear trend whose slope parameter $\theta$ is ranging from $0$ to $0.1$ on a $5\times 5$ grid. We assume that we observe a single realization of an empirical measure $\hat{b}_m$ sampled from $b$ with $m=10^3$. Then, we generate $M=10^3$ bootstrap samples of random measures $\hat{b}^{\ast}_{m,j}$  from $\hat{b}_m$ (with $1 \leq j \leq M$), which allows the computation of the $p$-value
$$\mbox{$p$-value}= \# \{ j \mbox{ such that } n|\WB_{p,\varepsilon}^p(a,\hat{b}^{\ast}_{m,j})-\WB_{p,\varepsilon}^p(a,\hat{b}_{m})-\langle\vbeta^{a,\hat{b}_m},\hat{b}_{m,j}^{\ast}-\hat{b}_m\rangle |\geq n\WB_{p,\varepsilon}^p(a,\hat{b}_m)\} / M.$$
This experiments is repeated 100 times, in order to estimate the power (at level $\ualpha$)  of a test based on $n\WB_{p,\varepsilon}^p(a,\hat{b}_m)$ by comparing the resulting sequence of $p$-values to the value $\ualpha$. The results are reported in Figure \ref{fig:power} (left).
\renewcommand{\sidecap}[1]{ {\begin{sideways}\parbox{0.5\textwidth}{\centering #1}\end{sideways}} }
\begin{figure}[ht!]
\begin{center}
\begin{tabular}{cccc}
& $H(T\vert a\otimes b)$ = Relative entropy & & $H(T)$ = Entropy\vspace{-1.3cm}\\
\sidecap{Test power}&\includegraphics[width=0.4 \textwidth,height=0.4\textwidth]{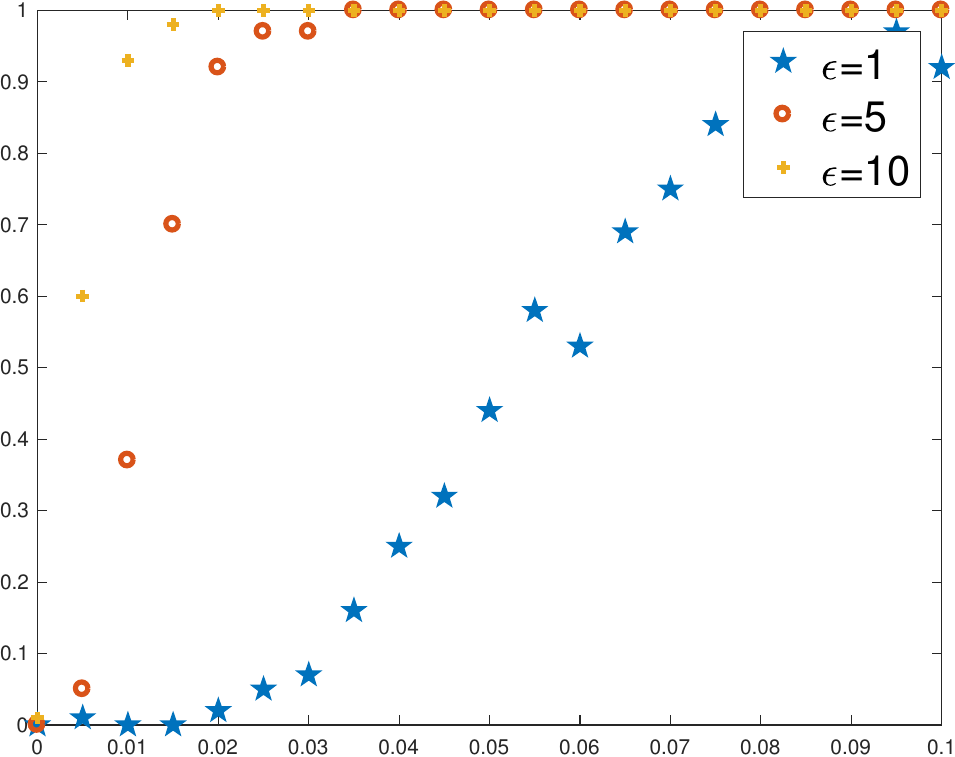}&
\sidecap{Test power}&\includegraphics[width=0.4 \textwidth,height=0.4\textwidth]{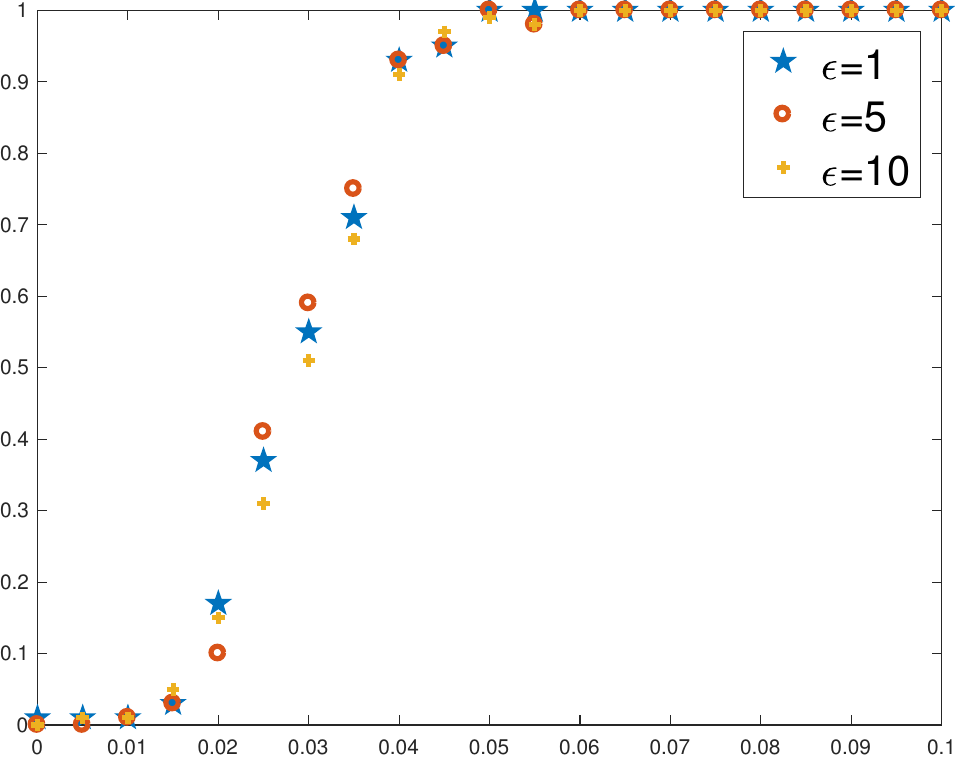}\\
&$\theta$ & & $\theta$
\end{tabular}
\caption{\label{fig:power}Test power (probability of rejecting $H_0$ knowing that $H_1$ is true) on a $5\times 5$ grid in the one sample case, as a function of the slope parameter $\theta$  ranging from $0$ to $0.15$ for $\varepsilon=1$ (blue), $\varepsilon=5$ (orange) and $\varepsilon=10$ (yellow), with $n = 10^3$. (left) $H(T\vert a\otimes b)$ = Relative entropy, (right) $H(T)$ = Entropy.}
\end{center}
\end{figure}
It can be seen that the resulting testing procedures are good discriminants for the three values of the regularization parameters $\varepsilon$ that we considered. As soon as the slope $\theta$ increases then $b$ sufficiently differs from $a$, and the probability of rejecting $H_0$ thus increases. We have also chosen to report results  obtained  with the Sinkhorn loss corresponding to  optimal transport regularized by the  entropy $H(T) = \sum_{ij}t_{ij}\log(t_{ij})$ instead of the relative entropy $H(T|a\otimes b)=\sum_{i,j}\log\left(\frac{t_{ij}}{a_ib_j}\right)t_{ij}$ (see Figure \ref{fig:power} (right)). Indeed, we remark that in the case of the relative entropy, the power of the test seems to highly depend on the value of $\varepsilon$. More precisely, for a fixed value of the slope parameter $\theta$ (or distribution $b$), the test power is larger as $\varepsilon$ increases. On the other hand, when using the Sinkhorn loss computed  with the entropy, the power of the test seems to be the same for any value of $\varepsilon$.

\begin{rmq}
The truly interesting property of the Sinkhorn loss over the Sinkhorn divergence is that in theory, for any  $\varepsilon>0$, we will obtain a steady $\varepsilon$-dependent asymptotic distribution, and that any regularization allows us to perform test statistics. In practice, more regularization leads to a blending of information. More precisely, the entropy will spread the mass of the distributions, and in some points of the grid, the differences of masses between the two distributions can be the result of regularization. On the other hand, when very few observations are available, and that measures are sparsely distributed on the grid, a large $\varepsilon$ will still allow to perform a statistical study.
\end{rmq}

\section{Analysis of real data}\label{sec:real}

We consider a dataset of colored images representing landscapes and foliage taken during Autumn (20 images) and Winter (17 images), see Figure \ref{fig:Dataseason} for examples. These images, provided by \cite{olmos2004biologically},  are available at \url{http://tabby.vision.mcgill.ca/html/welcome.html}. 

\begin{figure}[ht!]
\begin{center}
\begin{tabular}{cccc}
\includegraphics[width=0.215\textwidth,height=0.215\textwidth]{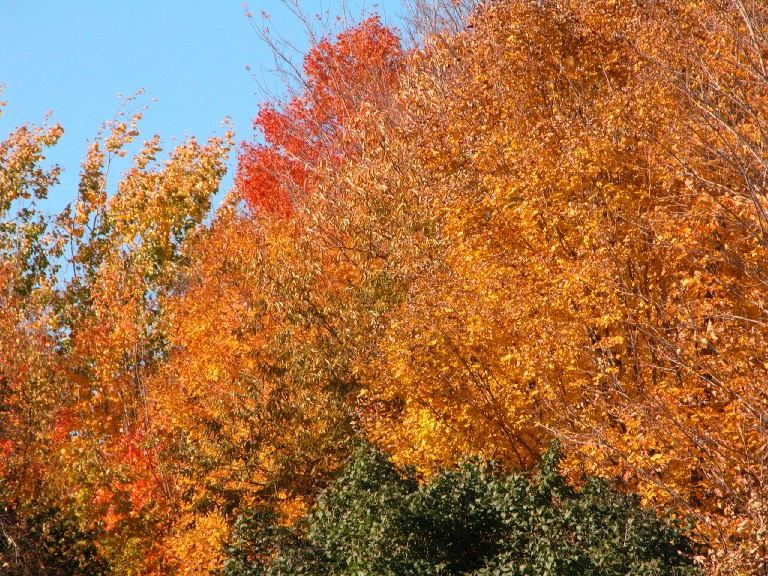} &
\includegraphics[width=0.215\textwidth,height=0.215\textwidth]{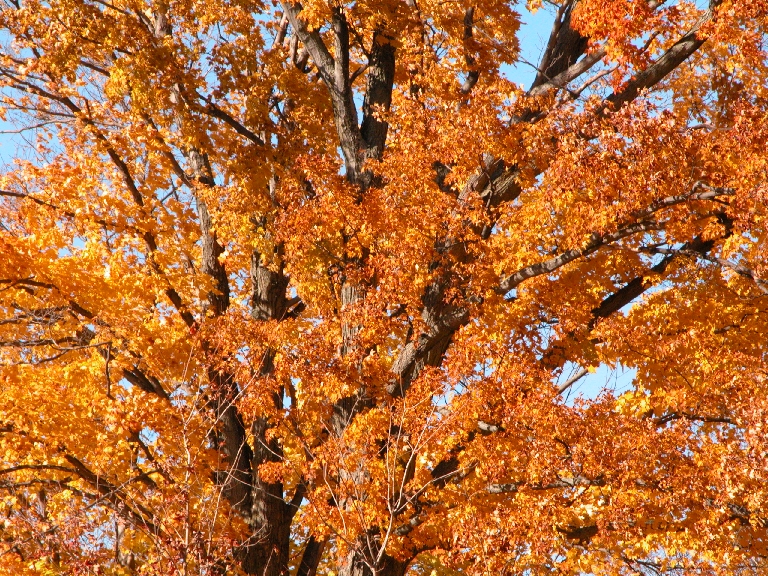} & 
\includegraphics[width=0.215\textwidth,height=0.215\textwidth]{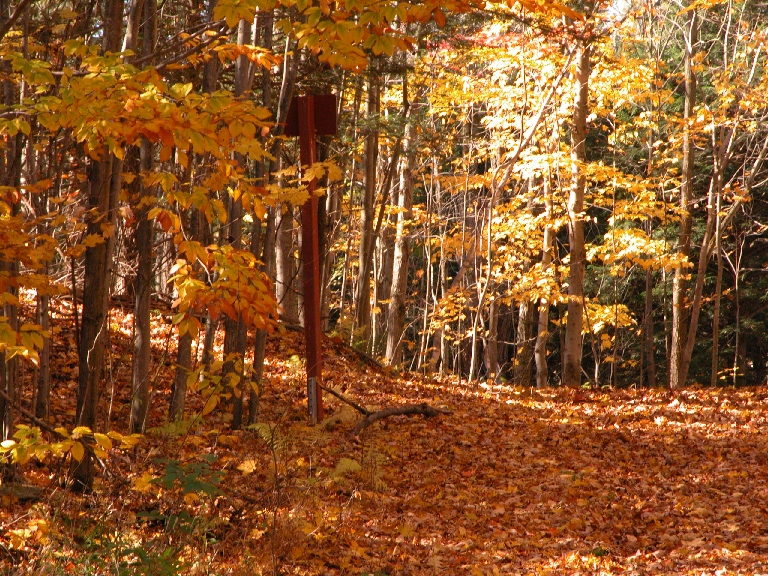} & 
\includegraphics[width=0.215\textwidth,height=0.215\textwidth]{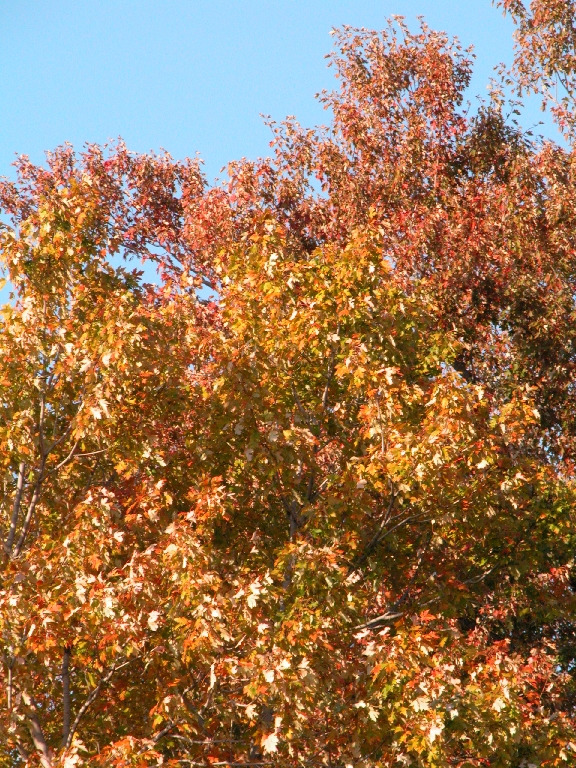} \\
\includegraphics[width=0.215\textwidth,height=0.215\textwidth]{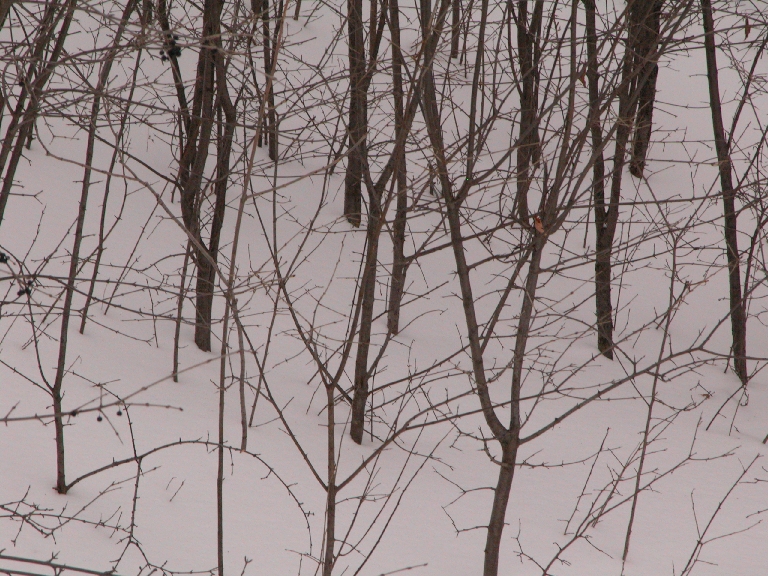} &
\includegraphics[width=0.215\textwidth,height=0.215\textwidth]{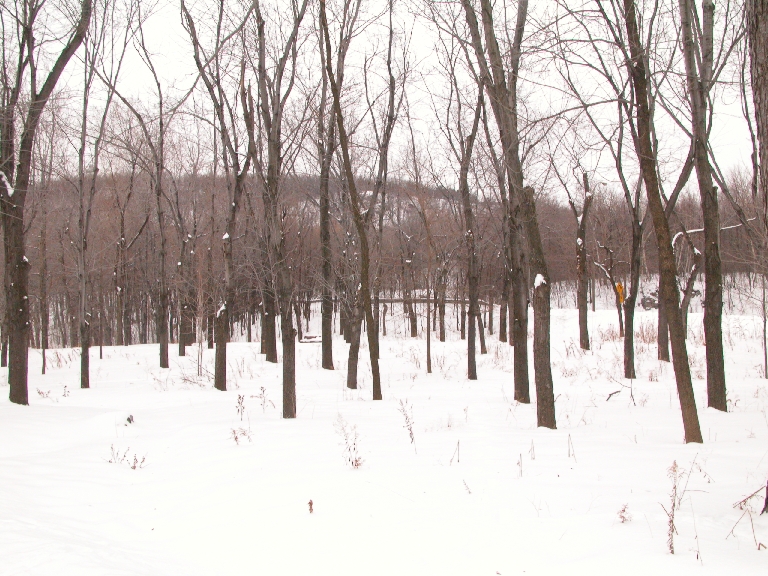} & 
\includegraphics[width=0.215\textwidth,height=0.215\textwidth]{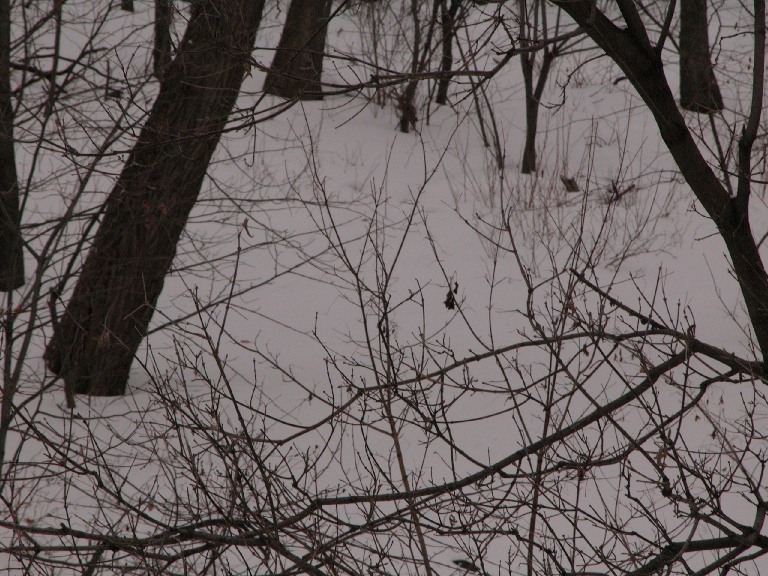} & 
\includegraphics[width=0.215\textwidth,height=0.215\textwidth]{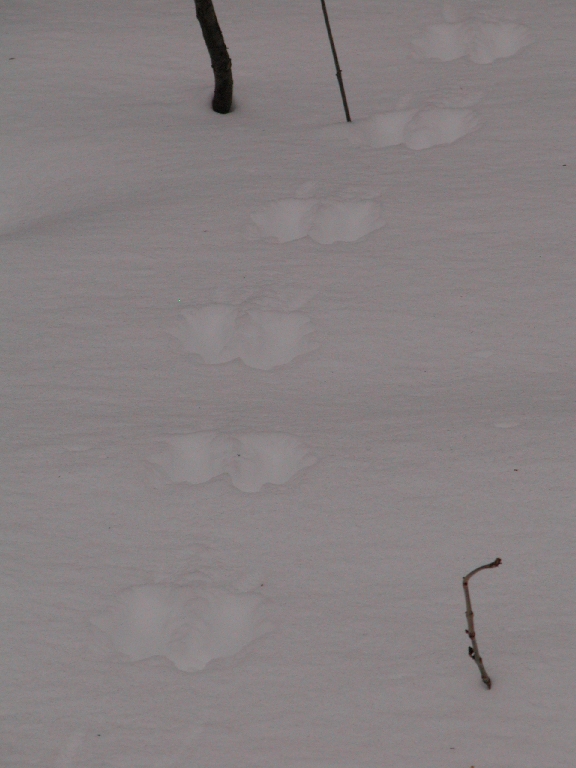}
\end{tabular}
\end{center}
\caption{\label{fig:Dataseason}Samples of $768\times 576$ colored images from autumn (first row) and winter (second row).}
\end{figure}

Each  image is transformed into a color histogram  on a three-dimensional grid (RGB colors) of size $N^3=16^3=4096$ of equi-spaced points. We will denote by $a_1,\ldots,a_{20}$ the autumn histograms and $w_1,\ldots,w_{17}$ the winter histograms. To compute the cost matrix $C$, we again use the squared  Euclidean distance between the spatial integer locations $x_i \in [0; 255] ^{3}$.

\subsection{Testing the hypothesis of equal color distribution between seasons}\label{subsec:H0AW}

We first test the null hypothesis that the  color distribution of the images in Autumn is the same as the  color distribution of the images in Winter. To this end, we consider the mean histogram of the dataset for each season, that we denote
$$\bar{a}_{20}=\frac{1}{20}\sum_{k=1}^{20} a_k \qquad \mbox{and} \qquad \bar{w}_{17}=\frac{1}{17}\sum_{k=1}^{17} w_k.$$
Notice that both $\bar{a}_{20}$ and $\bar{w}_{17}$ are discrete empirical measures admitting a zero mass for many locations $x_i$.\\

 We use the two samples testing procedure described previously, and a bootstrap approach to estimate the distribution of the test statistics
$$
\rho_{n,m}^2\WB_{p,\varepsilon}^p(\bar{a}_{20},\bar{w}_{17}).
$$
Notice also that $n$ and $m$ respectively correspond to the number of observations  for the empirical Autumn distribution $\bar{a}_{20}$ and the empirical Winter distribution $\bar{w}_{17}$, which is the total number of pixels times the number of images. Therefore, $n=20*768*576=8 847 360$ and $m=17*768*576=7 520 256$.
We report the results of the testing procedure for $\varepsilon = 10, 100$ by displaying in Figure \ref{fig:autumnVSwinter} an estimation of $M=100$ observations of the bootstrap statistic's density

$$\rho_{n,m}^2\left(\WB_{p,\varepsilon}^p(\hat{a}^{\ast}_n,\hat{w}^{\ast}_m)-\WB_{p,\varepsilon}^p(\bar{a}_{20},\bar{w}_{17})-(
\langle\ualpha^{\bar{a}_{20},\bar{w}_{17}},\hat{a}_n^{\ast}-\bar{a}_{20}\rangle +
 \langle\vbeta^{\bar{a}_{20},\bar{w}_{17}},\hat{w}_m^{\ast}-\bar{w}_{17})\rangle\right),$$
 
where $\hat{a}^{\ast}_n$ and $\hat{w}^{\ast}_m$ are respectively bootstrap samples of  $\bar{a}_{20}$ and $\bar{w}_{17}$, and $(\ualpha^{\bar{a}_{20},\bar{w}_{17}},\vbeta^{\bar{a}_{20},\bar{w}_{17}})$ are the optimal dual variables associated to $(\bar{a}_{20},\bar{w}_{17})$ in  problem \eqref{dual}.

For $\varepsilon= 10, 100$, the value of $\rho_{n,m}^2(\WB_{p,\varepsilon}^p(\bar{a}_{20},\bar{w}_{17}))$ is outside the support of this density, and the null hypothesis that the color distributions of images taken during Autumn and Winter are  the same is thus rejected.
 In particular, the test statistic $\rho_{n,m}^2(\WB_{p,\varepsilon}^p(\bar{a}_{20},\bar{w}_{17}))$
is equal to $6.07\times 10^7$ for $\varepsilon=10$ and to  $5.03\times 10^7$ for $\varepsilon=100$. 

\begin{figure}[ht!]
\begin{center}
\begin{tabular}{cc} \includegraphics[width=0.3\textwidth,height=0.28\textwidth]{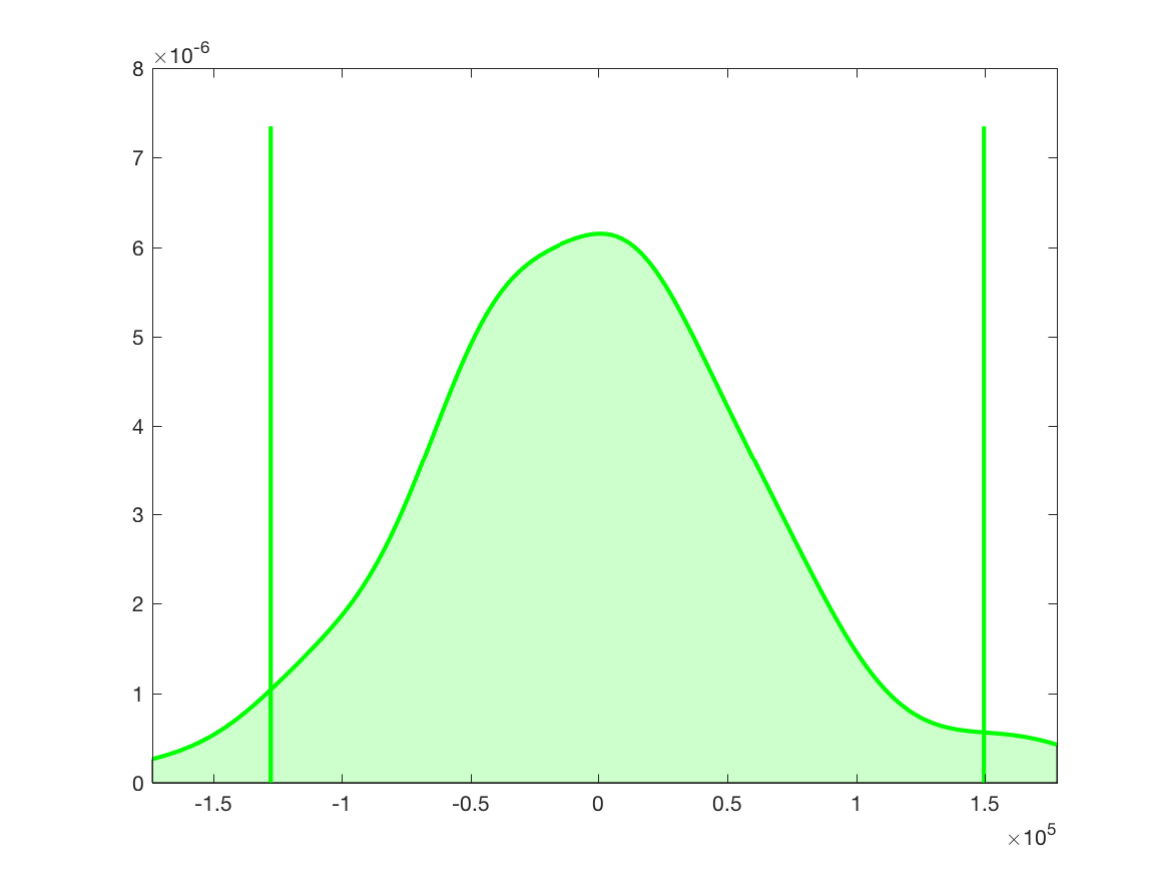}
&\includegraphics[width=0.3\textwidth,height=0.28\textwidth]{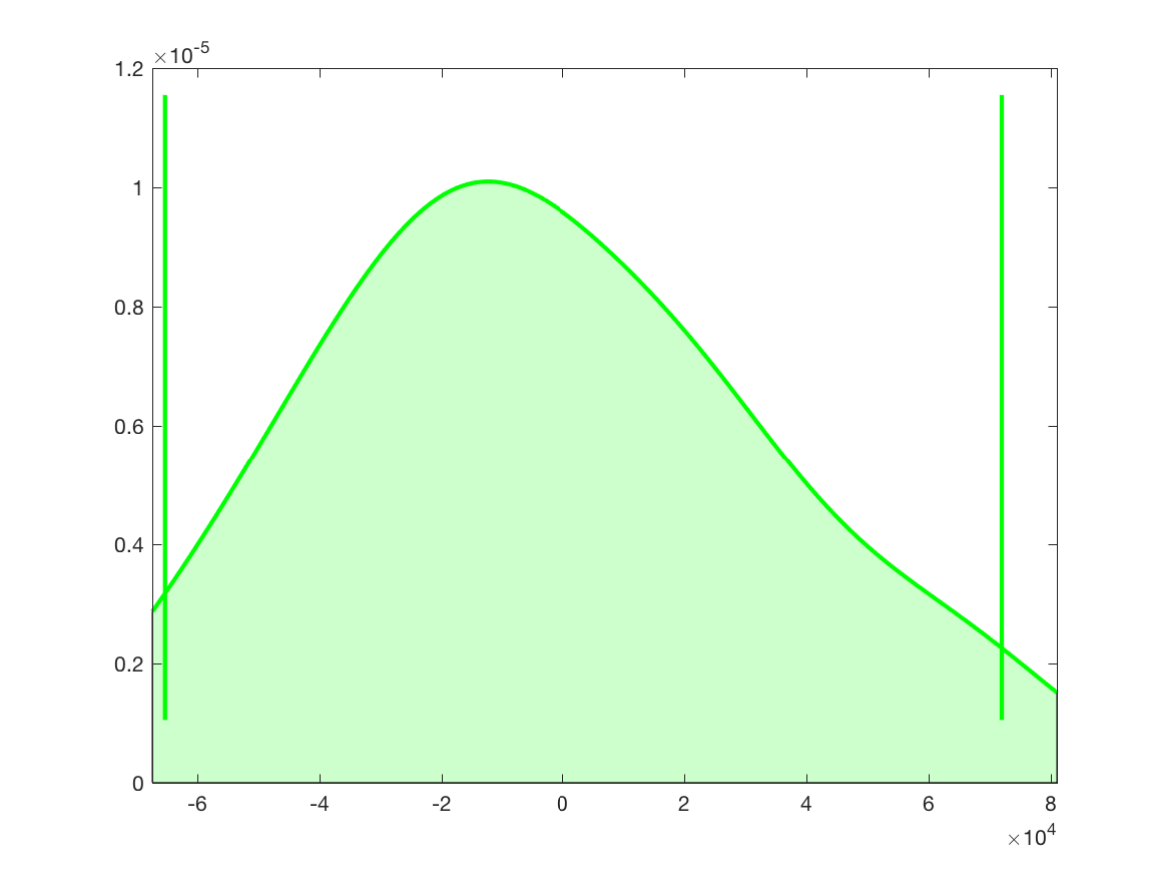}\\
(a) & (b)
\end{tabular}
\caption{\label{fig:autumnVSwinter}Testing equality of color distributions between Autumn and Winter for a grid of size $16^3=4096$. Green densities represent the distribution of the bootstrap statistics $\rho_{n,m}^2(\WB_{p,\varepsilon}^p(\hat{a}^{\ast}_n,\hat{w}^{\ast}_m)-\WB_{p,\varepsilon}^p(\bar{a}_{20},\bar{w}_{17})-(
\langle\ualpha^{\bar{a}_{20},\bar{w}_{17}},\hat{a}_n^{\ast}-\bar{a}_{20}\rangle +
 \langle\vbeta^{\bar{a}_{20},\bar{w}_{17}},\hat{w}_m^{\ast}-\bar{w}_{17}\rangle))$ (vertical bars represent a confidence interval of level $95\%$) for (a) $\varepsilon=10$ and (b) $\varepsilon=100$. The value of $\rho_{n,m}^2\WB_{p,\varepsilon}^p(\bar{a}_{20},\bar{w}_{17})$ is outside the support of the green density for each value of $\varepsilon$, and it is thus not represented.}
\end{center}
\end{figure}

We also run the exact same experiments for a smaller grid (size $8^3=512$) and a higher number of observations ($M=1000$). The results are displayed in Figure \ref{fig:autumnVSwinterGrid8}. The distributions  $\rho_{n,m}^2(\WB_{p,\varepsilon}^p(\hat{a}^{\ast}_n,\hat{w}^{\ast}_m)-\WB_{p,\varepsilon}^p(\bar{a}_{20},\bar{w}_{17})-(
\langle\ualpha^{\bar{a}_{20},\bar{w}_{17}},\hat{a}_n^{\ast}-\bar{a}_{20}\rangle +
 \langle\vbeta^{\bar{a}_{20},\bar{w}_{17}},\hat{w}_m^{\ast}-\bar{w}_{17}\rangle))$ are much more centered around $0$ (we gain a factor $10$). However, we obtain the same conclusion as before, with a test statistic equal to $9.39\times 10^6$ for $\varepsilon=10$ and $8.50\times 10^6$ for $\varepsilon=100$. 
 
\begin{figure}[ht!]
\begin{center}
\begin{tabular}{cc} \includegraphics[width=0.3\textwidth,height=0.28\textwidth]{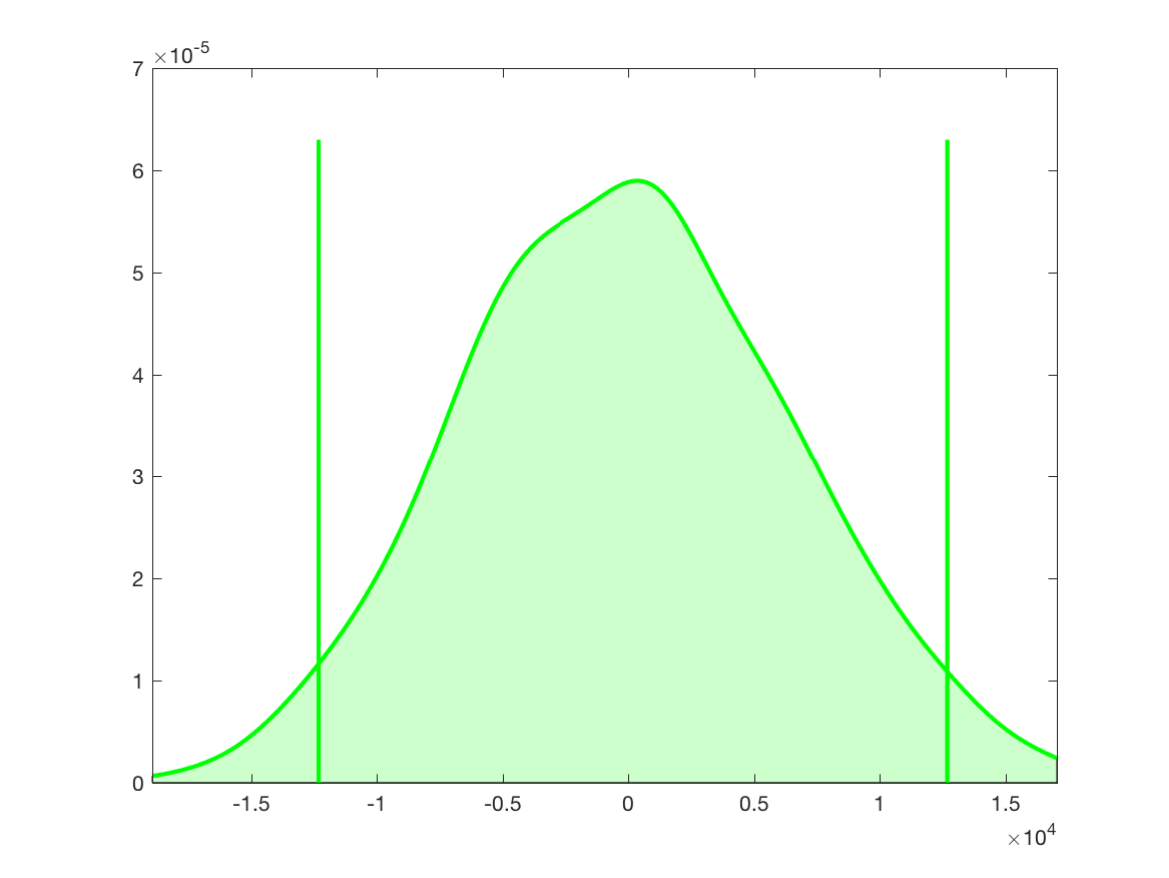}
&\includegraphics[width=0.3\textwidth,height=0.28\textwidth]{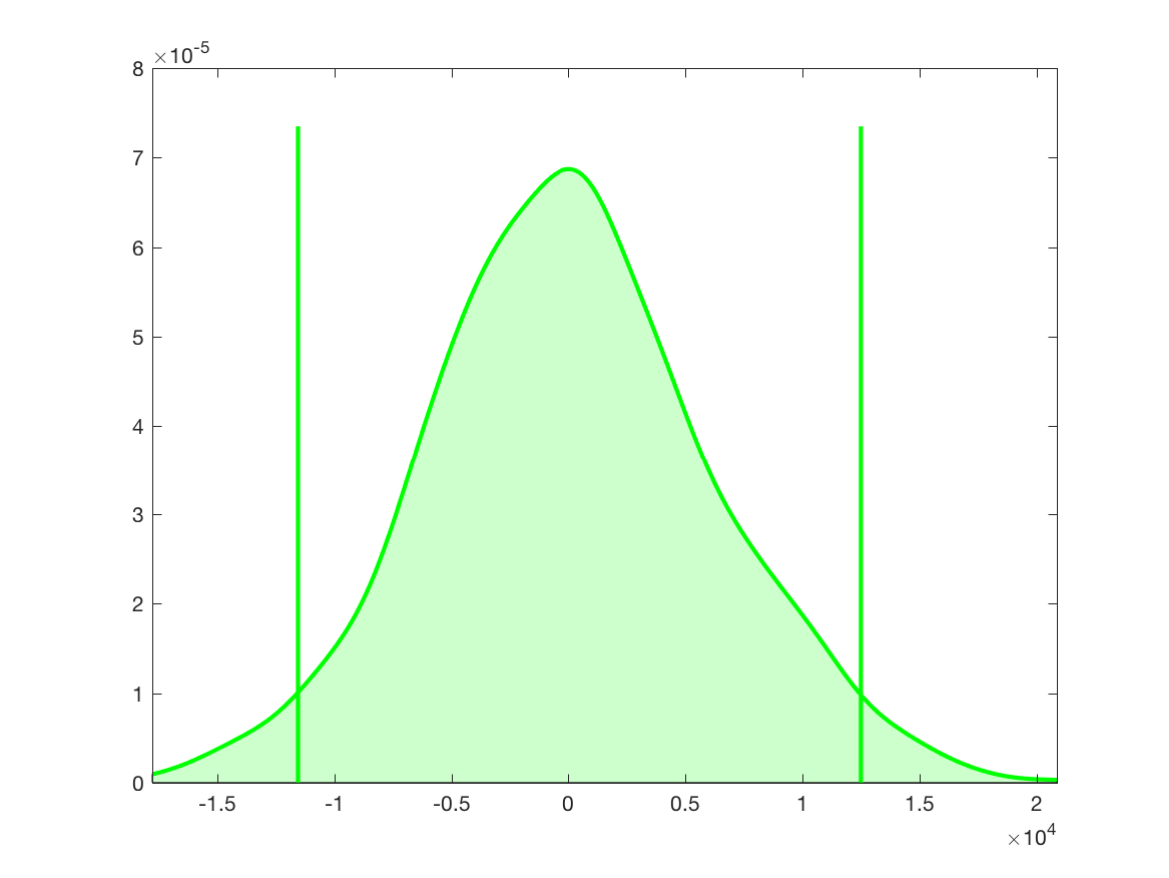}\\
(a) & (b)
\end{tabular}
\caption{\label{fig:autumnVSwinterGrid8}Testing equality of color distributions between Autumn and Winter for a grid of size $8^3=512$. Green densities represent the distribution of the bootstrap statistics $\rho_{n,m}^2(\WB_{p,\varepsilon}^p(\hat{a}^{\ast}_n,\hat{w}^{\ast}_m)-\WB_{p,\varepsilon}^p(\bar{a}_{20},\bar{w}_{17})-(
\langle\ualpha^{\bar{a}_{20},\bar{w}_{17}},\hat{a}_n^{\ast}-\bar{a}_{20}\rangle +
 \langle\vbeta^{\bar{a}_{20},\bar{w}_{17}},\hat{w}_m^{\ast}-\bar{w}_{17}\rangle))$ (vertical bars represent a confidence interval of level $95\%$) for (a) $\varepsilon=10$ and (b) $\varepsilon=100$. The value of $\rho_{n,m}^2\WB_{p,\varepsilon}^p(\bar{a}_{20},\bar{w}_{17})$ is outside the support of the green density for each value of $\varepsilon$, and it is thus not represented.}
\end{center}
\end{figure}

\subsection{Testing the hypothesis of equal distribution when splitting the Autumn dataset}

We propose now to investigate the equality of distributions within the same dataset of Autumn histograms. To this end, we arbitrarily split the Autumn dataset into two subsets of 10 images and we compute their mean distribution
$$
\bar{a}_{1\to 10} = \frac{1}{10} \sum_{k=1}^{10} a_{k} \qquad \mbox{and} \qquad \bar{a}_{11\to 20} = \frac{1}{10} \sum_{k=11}^{20} a_{k},
$$
for which $n=m=10*768*576=4 423 680$.
The procedure is then similar to the Autumn versus Winter case in Subsection \ref{subsec:H0AW}, meaning that we sample $M=100$ bootstrap distributions $\hat{a}^{\ast}_n$ and $\hat{b}^{\ast}_m$ from respectively $\bar{a}_{1\to 10}$ and $\bar{a}_{11\to 20}$. The results are displayed in Figure \ref{fig:autumnVSautumn}. We obtain similar results than in the two seasons case, the null hypothesis that both Autumn distributions follow the same law is thus rejected. On the other hand, the test statistics are smaller in this case as the histogram of color seems to be closer. Indeed, the quantity $\rho_{n,m}^2\WB_{p,\varepsilon}^p(\bar{a}_{1\to 10},\bar{a}_{11\to 20})$ is equal to $11.02\times 10^6$ for $\varepsilon=10$ and $5.50\times 10^6$ for $\varepsilon=100$. 
\begin{figure}[ht!]
\begin{center}
\begin{tabular}{cc}
\includegraphics[width=0.3\textwidth,height=0.28\textwidth]{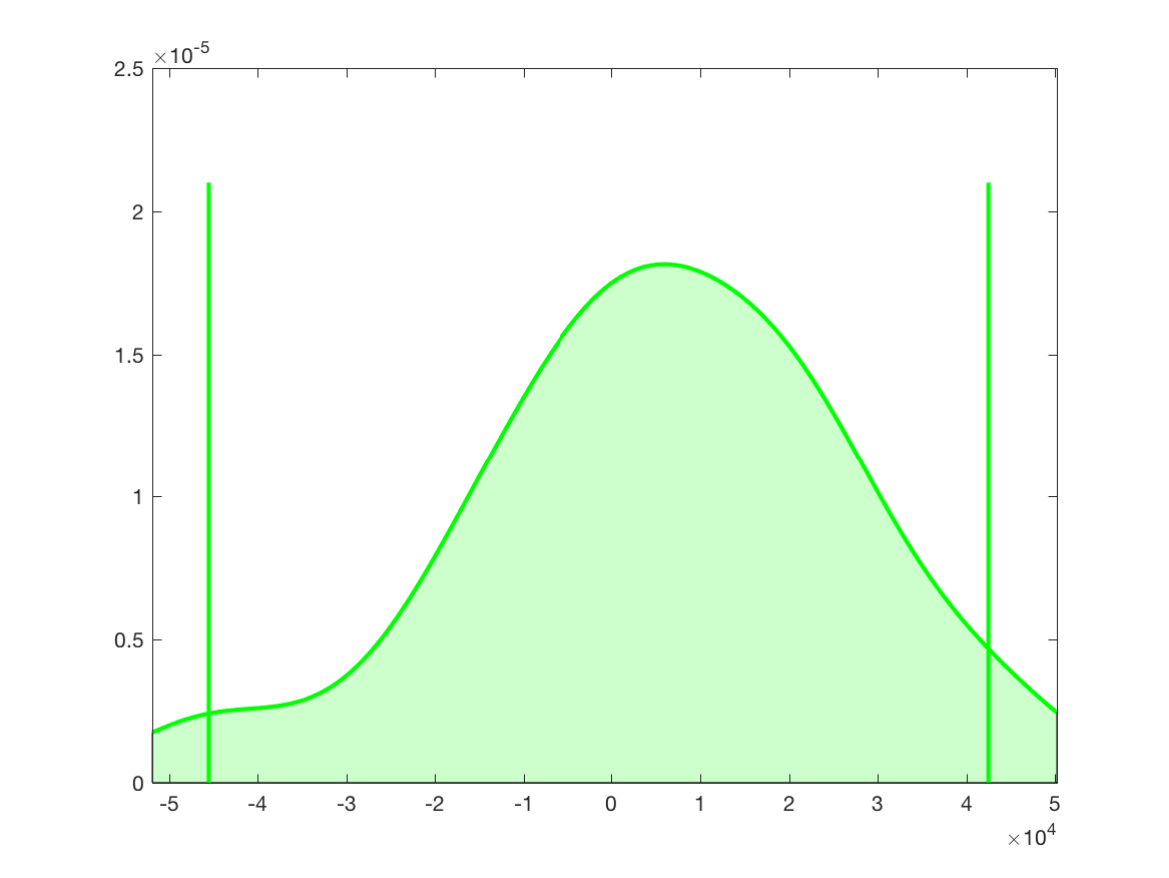}
& \includegraphics[width=0.3\textwidth,height=0.28\textwidth] {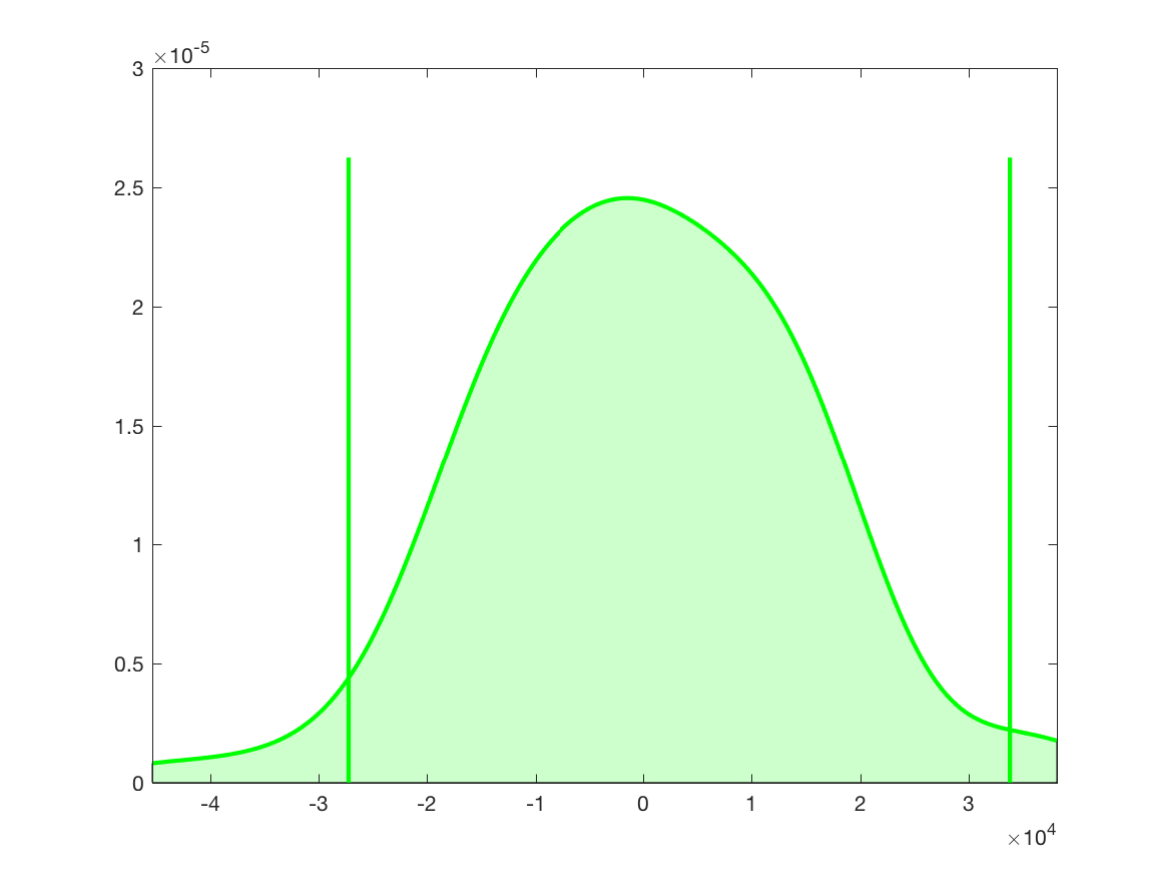}\\
(a) & (b)
\end{tabular}
\caption{\label{fig:autumnVSautumn}Testing equality of color distributions when splitting the autumn dataset into two for a grid of size $16^3=512$. Green densities represent the distribution of the bootstrap statistics $\rho_{n,m}^2(\WB_{p,\varepsilon}^p(\hat{a}^{\ast}_n,\hat{b}^{\ast}_m)-\WB_{p,\varepsilon}^p(\bar{a}_{1\to 10},\bar{a}_{11\to 20})-(
\langle\ualpha^{\bar{a}_{1\to 11},\bar{a}_{11\to 20}},\hat{a}_n^{\ast}-\bar{a}_{1\to 11}\rangle +
 \langle\vbeta^{\bar{a}_{1\to 11},\bar{a}_{11\to 20}},\hat{b}_m^{\ast}-\bar{a}_{11\to 20}\rangle))$ (vertical bars represent a confidence interval of level $95\%$) for (a) $\varepsilon=10$ and (b) $\varepsilon=100$. The value of $\rho_{n,m}^2\WB_{p,\varepsilon}^p(\bar{a}_{1\to 10},\bar{a}_{11\to 20})$ is outside the support of the green density for each value of $\varepsilon$, and it is thus not represented.}.
\end{center}
\end{figure}

Similarly to the Winter VS Autumn case, we also run the same Autumn VS Autumn experiments for a grid of size $8^3=512$ and $M=1000$ observations. The results are displayed in Figure \ref{fig:autumnVSautumnGrid8} for test statistics equal to $14.07\times 10^5$ for $\varepsilon=10$ and $3.41\times 10^5$ for $\varepsilon=100$.

\begin{figure}[ht!]
\begin{center}
\begin{tabular}{cc}
\includegraphics[width=0.3\textwidth,height=0.28\textwidth]{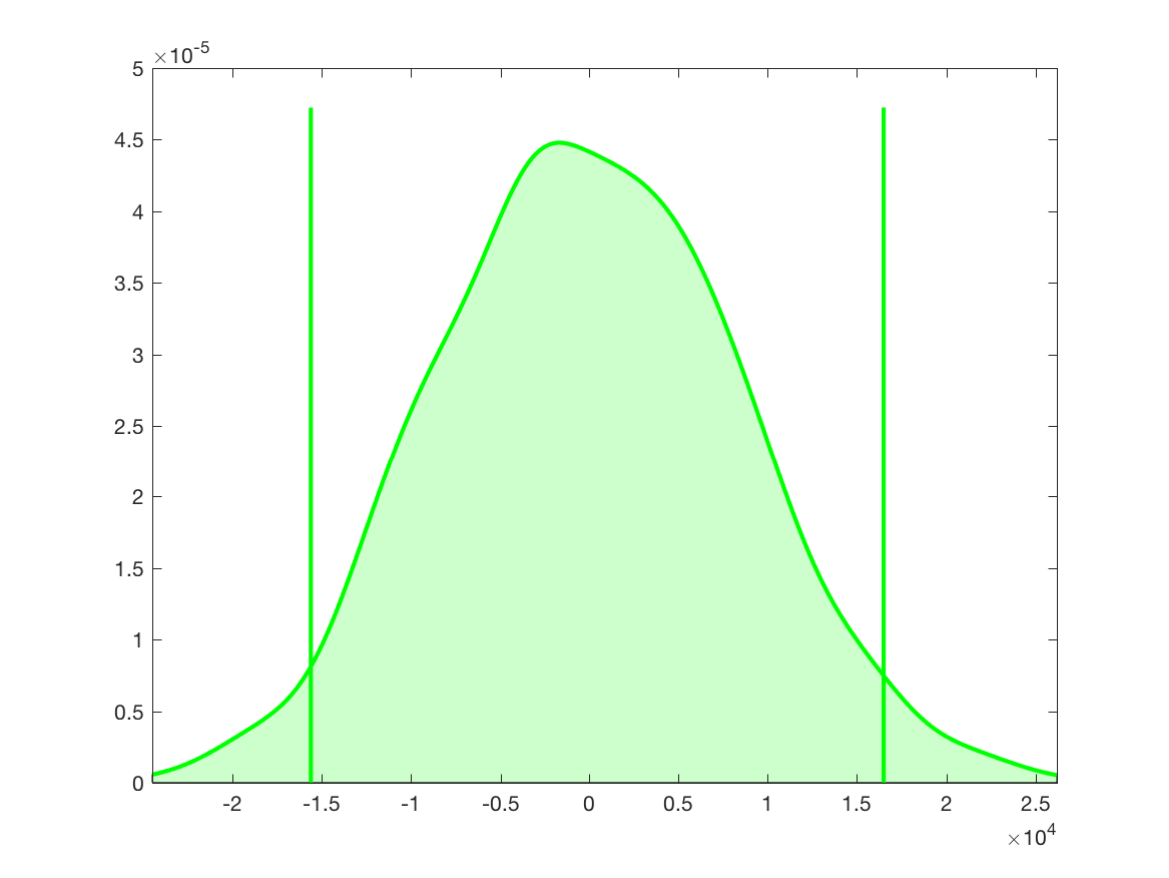}
& \includegraphics[width=0.3\textwidth,height=0.28\textwidth] {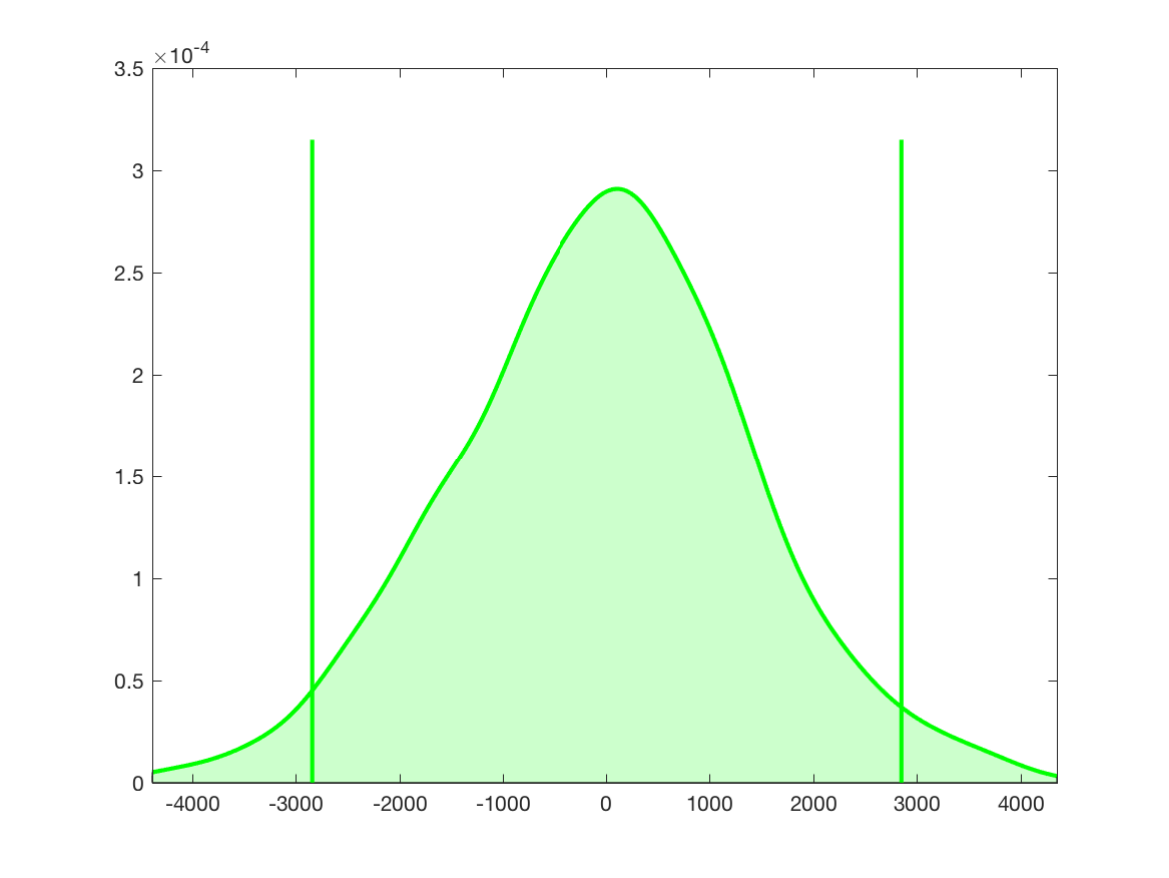}\\
(a) & (b)
\end{tabular}
\caption{\label{fig:autumnVSautumnGrid8}Testing equality of color distributions when splitting the autumn dataset into two for a grid of size $8^3=512$. Green densities represent the distribution of the bootstrap statistics $\rho_{n,m}^2(\WB_{p,\varepsilon}^p(\hat{a}^{\ast}_n,\hat{b}^{\ast}_m)-\WB_{p,\varepsilon}^p(\bar{a}_{1\to 10},\bar{a}_{11\to 20})-(
\langle\ualpha^{\bar{a}_{1\to 11},\bar{a}_{11\to 20}},\hat{a}_n^{\ast}-\bar{a}_{1\to 11}\rangle +
 \langle\vbeta^{\bar{a}_{1\to 11},\bar{a}_{11\to 20}},\hat{b}_m^{\ast}-\bar{a}_{11\to 20}\rangle))$ (vertical bars represent a confidence interval of level $95\%$) for (a) $\varepsilon=10$ and (b) $\varepsilon=100$. The value of $\rho_{n,m}^2\WB_{p,\varepsilon}^p(\bar{a}_{1\to 10},\bar{a}_{11\to 20})$ is outside the support of the green density for each value of $\varepsilon$, and it is thus not represented.}
\end{center}
\end{figure}

\begin{rmq}
For comparison purpose, we ran a $\chi^2$ test of homogeneity for testing the hypothesis of equal distributions of colors. The obtained test statistic in the \textit{Autumn vs Winter} case is equal to $\chi^2_{AW} = 6.96 \times 10^4$, and in the \textit{Autumn splitting} case to  $\chi^2_{AA} = 4.06 \times 10^4$. Even if $\chi^2_{AA}$ is indeed smaller than $\chi^2_{AW}$, the contrast between these two is weaker than with the Sinkhorn loss test.
\end{rmq}

\section{Future works} \label{sec:concl}

As remarked in \cite{sommerfeld2016inference}, there exists a vast literature for two-sample testing using univariate data. However, in a multivariate setting, it is difficult to consider that there exist standard methods to test the equality of two distributions. We thus intend to further investigate the benefits of the use of the empirical Sinkhorn loss to propose novel testing procedures able to compare multivariate distributions for real data analysis. A first perspective is to apply the methodology developed in this paper to more than two samples using the notion of smoothed Wasserstein barycenters (see e.g.\ \cite{CuturiPeyre} and references therein) for the analysis of variance  of multiple and multivariate random measures (MANOVA). However, as pointed out in \cite{CuturiPeyre}, a critical issue in this setting will be the choice of the regularization parameter $\varepsilon$, as it has a large influence on the shape of the estimated Wasserstein  barycenter.
Another interesting extension of the results presented in this paper would be to obtain the eigenvalues of the Hessian matrix of the Sinkhorn loss, in order to compute the distributional limit under the null hypothesis of equality of distributions.

\end{document}